\documentclass[11pt,oneside]{amsart}
\setcounter{tocdepth}{1}
\usepackage[english]{babel}
\usepackage{amssymb,enumerate,latexsym,hyperref}
\usepackage[monochrome]{xcolor}
\usepackage{marginnote}
\usepackage{color}
\usepackage{geometry}
\geometry{
  top=1in,       
  bottom=1in,    
  left=1in,      
  right=1in      
}
\newcommand{\Tau}{\mathrm{T}}
\newcommand{\assign}{:=}

\newcommand{\red}[1]{\textcolor{red}{#1}} 
\newcommand{\tmtextbf}[1]{\text{{\bfseries{#1}}}}

\newenvironment{proof*}[1]{\medskip\noindent\textbf{#1\ }}{\hspace*{\fill}$\Box$\medskip}
\newtheorem{theorem}{Theorem}[section]
\newtheorem{lemma}[theorem]{Lemma}
\newtheorem{proposition}[theorem]{Proposition}
\newtheorem{corollary}[theorem]{Corollary}
\newtheorem{definition}[theorem]{Definition}
\theoremstyle{remark}
\newtheorem{remark}[theorem]{Remark}
\newtheorem{example}[theorem]{Example}
\newtheorem{problem}[theorem]{Problem}
\newtheorem{conjecture}{\bfseries{Conjecture}}


\begin{document}

\title[holomorphic partially hyperbolic systems]{On Holomorphic partially hyperbolic systems} 

\author{disheng xu}
\address{School of Science, Great Bay University and Great bay institute for advanced study, 
Songshan Lake International Innovation Entrepreneurship Community A5, Dongguan 523000, CHINA}
\email{xudisheng@gbu.edu.cn}

\author{jiesong zhang}
\address{School of Mathematical Sciences, Peking University, No.5 Yiheyuan Road, Haidian District, Beijing 100871, China}
\email{zhjs@stu.pku.edu.cn}

\begin{abstract}
We prove that the center distribution of any fibered holomorphic partially hyperbolic diffeomorphism on a complex $3$-fold is holomorphic. In particular, any such a system is a holomorphic skew product over a linear automorphism on a complex $2$-torus. In higher dimension, we demonstrate a contrast: for any $n\geq 5$, we construct a holomorphic fibered partially hyperbolic system on a complex $n$-fold, where the center distribution is not holomorphic in any open subset.
\end{abstract}

{\maketitle}

{\tableofcontents}

\section{Introduction}
\subsection{A brief introduction to Anosov systems, partially hyperbolic systems and pathological foliations}
Dynamical systems exhibiting hyperbolic behavior constitute a broad and fundamental topic in the theory of dynamical systems, the strongest form of this behavior is known as uniform hyperbolicity, also referred to as Anosov systems. Recall that a diffeomorphism $f : M \rightarrow M$ on a compact Riemannian manifold $M$ is called \textit{Anosov} if the tangent bundle
$TM$ allows a $Df$-invariant splitting $E^s \oplus E^u$ and there exists a positive integer $k$ such that for any $x\in M$, $$\|Df^k|_{E^s(x)}\|<1<\|(Df^k|_{E^u(x)})^{-1}\|^{-1}.$$
All known examples of Anosov diffeomorphisms are \textit{topologically} conjugate to automorphisms of infra-nilmanifolds. An open conjecture that dates back to Anosov and Smale is that there is no other example of Anosov diffeomorphisms \cite{sma67}. Franks \cite{fra70} and Newhouse \cite{new70} confirmed the conjecture in codimension-$1$ case, but the higher codimension case remains widely open. A remarkable aspect is that $E^{s,u}$ are always tangent to H\"older continuous foliations $W^{s,u}$ respectively, which have $C^1$ leaves and are \textit{absolutely continuous} \cite{bs02}. 

Partially hyperbolic systems are natural generalizations of Anosov systems.
A diffeomorphism $f : M \rightarrow M$ is called partially hyperbolic if there exists a continuous $Df$-invariant splitting of $TM$, 
$TM=E^s \oplus E^c \oplus E^u$ such that for any $x\in M$, 
\begin{eqnarray*}
    &&\|Df^k|_{E^s(x)}\|<1<\||(Df^k|_{E^u(x)})^{-1}\|^{-1};\\
    &&\|Df^k|_{E^s(x)}\|<\|(Df^k|_{E^c(x)})^{-1}\|^{-1},\quad \|Df^k|_{E^c(x)}\|<\|(Df^k|_{E^u(x)})^{-1}\|^{-1}.
\end{eqnarray*}
In particular a partially hyperbolic diffeomorphism is Anosov if $E^c$ is trivial. Examples of partially hyperbolic systems include the time $t$-map of Anosov flows, suspension of Anosov systems, skew product over Anosov systems, etc. 

Partially hyperbolic systems with non-trivial center distribution $E^c$ demonstrate more diverse dynamical phenomena compared to Anosov systems. 
Similar to Anosov systems, $E^{s,u}$ of partially hyperbolic systems are integrable,  and the integral foliations, denoted again by $W^{s,u}$, are absolutely continuous as that of Anosov systems. However, the center distribution $E^c$ could be non-integrable \cite{wil98,hhu16}. Even the bundle $E^c$ is tangent to a compact foliation $W^c$, in general, $W^c$ can still be non-absolutely continuous. To illustrate this we consider an important class of partially hyperbolic system.
\begin{definition}
Let $f$ be a partially hyperbolic diffeomorphism of a closed manifold $M$, $f$ is called 
\textit{fibered} if $E^c$ is tangent to a compact foliation $W^c$ and $W^c$ is a topological fibration of $M$, i.e. the quotient space $\hat M = M/ W^c$ is a topological manifold. And the map $\hat f :\hat M \to \hat M$ is called the \textit{base map}. 

If in addition that the manifold $M$ is a complex manifold, $f$ is holomorphic and 
$W^c$ is a holomorphic foliation (see Section \ref{journee}), then $f$ is called a \textit{holomorphic skew product}. In this case, $\hat M$ is also a compact complex manifold and $\hat f$ is a holomorphic Anosov diffeomorphism. 
\end{definition}

For partially hyperbolic systems with compact center leaves, the \textit{fibered} condition is not quite restrictive. It is expected that if all center leaves of a partially hyperbolic diffeomorphism $f$ are compact then there exists a finite cover of $f$  is fibered (see \cite{boh11} and references therein for some partial results about this problem). 

In general, the center foliations of fibered partially hyperbolic systems may not be absolutely continuous. Shub and Wilkinson \cite{sw00} (see also Ruelle-Wilkinson \cite{rw01}) constructed a real analytic stably ergodic fibered partially hyperbolic system on $\mathbb T^3$ such that there exists a full volume set that intersects every $W^c$-leaf at only a finite number of points. This phenomenon, commonly known as ``Fubini's nightmare", results in such 
$W^c$ being labeled as \textit{pathological}. 
A non-ergodic example of pathological center foliation is given by Katok \cite{mil97} by different mechanism, see also Section \ref{Subsec: sketch/plan}.

\subsection{Holomorphic diffeomorphisms and \textit{holomorphic rigidity}}\label{subsec: Ghys}
Unlike the dynamics induced by smooth diffeomorphisms, the dynamics of holomorphic diffeomorphisms are expected to exhibit stronger rigidity properties, especially in lower dimensions.
For instance, if a holomorphic diffeomorphism on a compact Kähler manifold preserves a Kähler form or is homotopic to the identity, then it exhibits simple dynamics characterized by zero entropy \cite{gro03}.
In the case that with positive entropy, E. Ghys \cite{ghy95} established a result on holomorphic rigidity, proved that any holomorphic Anosov diffeomorphism on a complex surface is a linear \footnote{A map $f:\mathbb C^n/\Lambda\to \mathbb C^n/\Lambda$ on is called linear if $f(x)=Ax+b (\mod \Lambda), A\in  GL(n,\mathbb C), A\cdot \Lambda=\Lambda, b\in \mathbb C^n$.} automorphism on a complex torus. Ghys also
suggested a conjecture that 
a similar holomorphic rigidity phenomenon might hold for higher-dimensional holomorphic Anosov systems
(Page 586, \cite{ghy95}):
\begin{conjecture}\label{conj: Ghys}
All holomorphic Anosov diffeomorphisms are holomorphically conjugate to automorphisms of complex infra-nilmanifolds.
\end{conjecture}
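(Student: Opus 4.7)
The plan is to attack Conjecture~\ref{conj: Ghys} in two stages: first show that the invariant distributions $E^s$ and $E^u$ are compatible with the complex structure, so that $W^{s,u}$ become holomorphic foliations; then exploit this holomorphic foliation structure together with hyperbolic dynamics to force the ambient manifold to be an infra-nilmanifold and the map to be an affine automorphism. This mirrors Ghys's surface argument, but there the Kodaira classification of complex surfaces essentially settles the second stage for free, whereas in dimension $n\geq 3$ both stages must be driven by dynamics.

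For the first stage, the natural claim is that $E^s$ and $E^u$ are $J$-invariant subbundles of $TM$, where $J$ is the complex structure. Heuristically, $Df$ is $\mathbb{C}$-linear on each tangent space and $E^{s,u}$ are characterized by extremal contraction/expansion rates, so $J\cdot E^{s,u}$ should again satisfy the same extremal rate inequalities and thus coincide with $E^{s,u}$. Turning this into a rigorous statement under only H\"older regularity of $E^{s,u}$ is delicate, but once established it promotes $W^{s,u}$ to holomorphic foliations in the sense of Section~\ref{journee}, whose leaves are immersed complex submanifolds and on which $f$ acts by holomorphic uniform contractions, respectively expansions. Holomorphic Sternberg-type linearization would then provide affine charts along leaves.

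For the second stage, the idea is to combine these leafwise affine structures with the transverse holomorphic structure of $W^{s,u}$ to build a holomorphic developing map from the universal cover $\widetilde M$ into a simply connected complex nilpotent Lie group, intertwining the lift $\widetilde f$ with a hyperbolic Lie group automorphism. Compactness of $M$ together with Liouville-type rigidity should then identify $\widetilde M$ with the Lie group and realize $M$ as an infra-nilmanifold; in the abelian case this yields $\widetilde M\cong\mathbb{C}^n$ and $f$ affine, matching the footnote convention for linear maps.

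The step I expect to be the main obstacle is the very first one: establishing $J$-invariance of $E^{s,u}$. Indeed, the examples constructed in this paper (see Section~\ref{Subsec: sketch/plan}) show that for holomorphic partially hyperbolic diffeomorphisms the dynamically defined center distribution can fail to be holomorphic on any open set, so compatibility with $J$ is not automatic for dynamically defined invariant bundles of holomorphic maps. In the Anosov setting $E^{s,u}$ are extremal rather than neutral, which gives some hope to rule out such pathologies via SRB measures or equilibrium-state arguments, but any approach must genuinely use hyperbolicity to defeat the mechanism behind the paper's higher-dimensional counterexamples. Beyond this, the conjecture contains the classical Anosov--Smale conjecture in high codimension, so a complete proof must either subsume that problem or use the holomorphic hypothesis in an essential way to bypass it.
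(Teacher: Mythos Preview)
This statement is Conjecture~\ref{conj: Ghys}, which the paper explicitly leaves open (``This conjecture remains open, even in dimension 3''), so there is no proof in the paper to compare against. Your proposal is a strategy sketch rather than a proof, and it contains a substantive confusion that misidentifies the actual obstacle.

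You conflate two distinct properties: (i) $J$-invariance of $E^{s,u}$ (equivalently, the leaves of $W^{s,u}$ being complex submanifolds), and (ii) holomorphicity of the foliations $W^{s,u}$ in the sense of Section~\ref{journee} (equivalently, $E^{s,u}$ being holomorphic distributions, i.e.\ varying holomorphically from point to point). Property (i) is trivial and is exactly Lemma~\ref{leaf} in the paper: since $Df$ is $\mathbb{C}$-linear, $\|Df(iv)\|=\|Df(v)\|$, so $iv\in E^{*}$ whenever $v\in E^{*}$. There is nothing delicate here and H\"older regularity is no obstruction. What does \emph{not} follow from (i) is (ii): knowing each $E^{s,u}(x)$ is a complex subspace says nothing about how these subspaces vary in $x$. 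The paper states the genuine difficulty plainly in Section~\ref{subsec: Ghys}: ``A significant challenge in proving this conjecture lies in establishing the holomorphicity of $E^{u,s}$. In fact it is generally unknown that $E^{u,s}$ are $C^1$.'' Your first stage, as written, proves only (i) and then asserts (ii) as a consequence, which is a gap.

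Relatedly, your reading of the paper's counterexamples is inverted. In the examples of Section~\ref{sec non holo ex}, the center distribution $E^c$ \emph{is} $J$-invariant (the center leaves are complex curves), yet $E^c$ fails to be a holomorphic distribution on any open set. So those examples illustrate precisely the (i)$\not\Rightarrow$(ii) phenomenon, not a failure of $J$-invariance. The mechanism you say ``must be defeated'' is therefore about transverse holomorphic variation of $E^{*}$, and nothing in your outline addresses that. Your second stage (developing map into a nilpotent Lie group) is reasonable in spirit and is close to what Cantat \cite{can04} carries out under the additional hypothesis that $E^{s,u}$ are already holomorphic, but it cannot begin until the real first-stage problem---regularity and holomorphicity of $E^{s,u}$---is solved.
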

This conjecture remains open, even in dimension 3. A significant challenge in proving this conjecture lies in establishing the holomorphicity of $E^{u,s}$. In fact it is generally unknown that $E^{u,s}$ are $C^1$. 
Ghys \cite{ghy95} proved that $E^{u}$ (or $E^s$) is holomorphic if it is of codimension one.  
Assume that $E^{u,s}$ both are holomorphic, 
Cantat \cite{can04} proved Conjecture \ref{conj: Ghys} holds for projective varieties.
\subsection{Non-holomorphic dynamically-defined invariant distributions} In fact, holomorphic diffeomorphisms may be more complicated than one might initially expect, and they may not exhibit the same level of rigidity as demonstrated in the studies \cite{ghy95, can04}. In this paper, we construct an example to show that holomorphic diffeomorphisms, especially in higher-dimensional settings with positive entropy, can have dynamically-defined invariant distributions that fail to be holomorphic on any non-empty open subset. These findings imply that such systems cannot be holomorphically conjugated to any algebraically defined systems.
More precisely: 
\begin{theorem}\label{thm: constru 5d}
  For any $n \geqslant 5$, there exists a compact complex manifold $M^n$ of
  dimension $n$ and a fibered holomorphic partially hyperbolic diffeomorphism $f$
  on $M^n$ such that the center distribution is real analytic but not 
  holomorphic on any non-empty open subset (implying that the center fibration $W^c$ is not holomorphic).
\end{theorem}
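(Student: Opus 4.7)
My plan is to construct the example as a non-trivial holomorphic extension of a holomorphic Anosov automorphism on a low-dimensional complex torus. It suffices to treat $n = 5$: for larger $n$, take the product of the constructed system with any holomorphic Anosov diffeomorphism on an additional complex torus, with its Lyapunov spectrum tuned so that the new factor contributes only to the strong stable and unstable directions, thereby preserving the center pathology.

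For $n = 5$, I would start with a holomorphic Anosov diffeomorphism $\hat f : T_B \to T_B$ on a complex 2-torus $T_B = \mathbb{C}^2/\Lambda$ arising from a hyperbolic element of $\mathrm{GL}(2, \mathbb{Z}[i])$, with holomorphic tangent splitting $TT_B = E^s_B \oplus E^u_B$. I would build $M$ as a holomorphic bundle over $T_B$ with fiber a complex 3-torus $T_F$, specified by a cocycle in $H^1(T_B, \mathrm{Aff}(T_F))$ that is $\hat f$-equivariant, and define $f : M \to M$ lifting $\hat f$ to act fiber-wise by a chosen toral automorphism of $T_F$ whose spectral radii lie strictly between the stable and unstable rates of $\hat f$. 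For a trivial bundle $M = T_B \times T_F$ this reduces to a direct product and gives a holomorphic vertical center subbundle, so the non-triviality of the bundle is essential for the pathology. The dimension count $2 + 3 = 5$ reflects the minimum number of directions needed to accommodate a base Anosov and a fiber piece rich enough to force a non-algebraic invariant splitting.

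The center distribution $E^c$ is then the unique $Df$-invariant subbundle whose Lyapunov spectrum matches the center rates. By the invariant-section theorem in the real-analytic category, $E^c$ is real-analytic; each fibre $E^c(m)$ is a complex subspace because $Df$ is $\mathbb{C}$-linear and $E^c$ is characterized by $\mathbb{R}_+$-valued growth rates. I would realize $E^c$ as the graph of a map into a Grassmannian bundle over $M$, characterized as the fixed point of a contracting graph-transform coming from the cocycle data. The construction of the bundle and fiber cocycle must be chosen so that this fixed-point equation has no holomorphic solution, forcing the real-analytic invariant section to depend on the base point through non-holomorphic quantities; this is achieved by arranging the cohomological obstruction in the spectral gap so that the cocycle equation, read in holomorphic coordinates, admits no global holomorphic primitive.

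Non-holomorphicity on every open set then follows by an identity-principle argument: if $E^c$ were holomorphic on an open set $U$, $f$-invariance would propagate holomorphy to $\bigcup_n f^n(U)$, which is dense by topological mixing of $\hat f$, and real-analyticity of $E^c$ would then force it to be globally holomorphic, contradicting the construction. The main obstacle is the construction itself: because all holomorphic self-maps of compact complex tori are affine and therefore yield holomorphic invariant distributions, no Shub--Wilkinson-type smooth perturbation is available in the holomorphic category, and the entire non-holomorphicity must be built into the global holomorphic geometry of the bundle $M \to T_B$ from the start. Exhibiting a specific holomorphic bundle and cocycle for which the graph-transform fixed point is provably non-holomorphic everywhere is the delicate analytic/holomorphic-geometric step, and is what genuinely distinguishes the $n \geq 5$ case from the rigid low-dimensional situation treated in the companion dimension-3 theorem.
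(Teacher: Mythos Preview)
Your construction has the roles of center and hyperbolic directions reversed, and this is fatal. You build $M$ as a holomorphic fibration $\pi:M\to T_B$ over the Anosov base with fiber $T_F$, and arrange the rates so that $E^c$ is exactly the vertical bundle $\ker D\pi$. But the vertical tangent bundle of any holomorphic submersion is a holomorphic subbundle of $TM$ --- this holds regardless of whether the bundle is topologically or holomorphically non-trivial. So in your setup $E^c$ is automatically holomorphic, and no choice of cocycle in $H^1(T_B,\mathrm{Aff}(T_F))$ can change that. The graph-transform fixed point you describe is simply the vertical distribution itself, and it is holomorphic for free. (Your closing identity-principle argument, on the other hand, is correct and is essentially what the paper uses once a genuine example has been built.)

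The paper's construction goes the other way: it fibers $X$ holomorphically over a compact complex curve $C$, with the fibers being complex tori of complex dimension $2n$ that serve as the $su$-leaves, while $f$ acts as the identity on $C$ and as a hyperbolic $A\in\mathrm{SL}(n,\mathbb{Z})$ (diagonally, four times) on each torus fiber. Thus $E^{su}=\ker D\pi$ is the holomorphic vertical, and $E^c$ is the complementary ``horizontal'' line field --- the tangent to the sections $C\times\{\text{pt}\}$ in the real trivialization $X_{\mathbb R}\cong C\times\mathbb T^{4n}$. The non-holomorphicity of $E^c$ then has a concrete geometric source: $X$ is a Blanchard--Calabi manifold built from a positive line bundle $L$ on $C$ via quaternionic sections, so the complex structures on the torus fibers vary non-trivially over $C$ (indeed $\deg(\det W)>0$ forces the bundle to be non-trivial). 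The center holonomy between two $su$-leaves is the real identity map between two non-biholomorphic complex tori, hence not holomorphic. If $E^c$ were holomorphic on any open set, the center holonomy would be holomorphic there, forcing nearby fibers to be biholomorphic; real-analyticity then propagates this to a global holomorphic product structure $X\cong C\times T$, contradicting non-triviality of $W$. So the essential idea you are missing is to put the Anosov dynamics in the \emph{fiber} of a holomorphic torus fibration whose fiber complex structure genuinely varies, and let the center be the transverse direction.
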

Theorem \ref{thm: constru 5d} can be viewed as a holomorphic analogue of pathological center foliations. For further discussions, see Sections \ref{Subsec: sketch/plan} and \ref{sec non holo ex}.

\subsection{Low dimensional holomorphic rigidity and results on accessibility classes}
In the context of low-dimensional cases, we show a contrast to Theorem \ref{thm: constru 5d}.
\begin{theorem}
  \label{t2}
  Let $f$ be a holomorphic fibered partially hyperbolic diffeomorphism of a compact complex $3$-fold $M$. Then $E^{cs},E^{cu}$ and $E^c$ are holomorphic distributions. Moreover $f$ 
  is a holomorphic skew product over a linear automorphism on a complex $2$-torus.
\end{theorem}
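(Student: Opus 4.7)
My plan is to use the holomorphic structure of $f$ to rigidify the partially hyperbolic splitting as \emph{complex} subbundles, adapt Ghys's codimension-one argument to prove holomorphicity of $E^{cs}$ and $E^{cu}$, and finally descend to the quotient complex surface and invoke Ghys's theorem on holomorphic Anosov diffeomorphisms. To begin, because $Df$ is $\mathbb{C}$-linear and a $J$-invariant Hermitian metric preserves norms under $J$, the uniqueness of the dominated splitting forces each of $E^s,E^c,E^u$ to be $J$-invariant, hence a complex subbundle of $TM$. Since $\dim_{\mathbb C} M = 3$ and all three summands are nontrivial (partial hyperbolicity forces $E^s,E^u\neq 0$, while the fibered hypothesis forces $E^c\neq 0$), the only admissible complex dimensions are $\dim_{\mathbb C}E^s=\dim_{\mathbb C}E^c=\dim_{\mathbb C}E^u=1$. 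Consequently $E^{cs}$ and $E^{cu}$ are integrable complex subbundles of complex codimension one, with $E^c = E^{cs}\cap E^{cu}$.

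Next I would prove that $E^{cs}$ is holomorphic (and symmetrically $E^{cu}$) by an adaptation of Ghys's codimension-one argument from the Anosov setting. Locally $E^{cs}$ is the kernel of a continuous $(1,0)$-form $\omega$; the $Df$-invariance of $E^{cs}$ gives $f^*\omega = \lambda\omega$ for a continuous cocycle $\lambda$, while Frobenius integrability gives $d\omega = \eta\wedge\omega$. The transverse direction to $W^{cs}$ is spanned by $E^u$, on which $f^{-1}$ contracts strictly faster than the center dynamics; iterating the pullback should damp the $\bar\partial$-component of $\omega$ to zero, forcing $\omega$ to be holomorphic. Intersecting $E^{cs}$ and $E^{cu}$ then immediately yields holomorphy of $E^c$.

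With $E^c$ holomorphic, the compact foliation $W^c$ becomes a holomorphic foliation by complex curves. The topological fibration $\pi:M\to\hat M=M/W^c$ therefore endows $\hat M$ with the structure of a compact complex surface for which $\pi$ is holomorphic (combining holomorphicity of the normal bundle with a Journé-type regularity statement for the transverse complex structure, as in Section~\ref{journee}). The induced base map $\hat f$ is then a holomorphic Anosov diffeomorphism of a compact complex $2$-fold, so by Ghys's theorem \cite{ghy95} $\hat M$ is a complex $2$-torus and $\hat f$ is a linear automorphism, whence $f$ is a holomorphic skew product over this linear base.

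The main obstacle is the middle step: upgrading the only-continuous, $J$-invariant codimension-one distribution $E^{cs}$ to a holomorphic one in the presence of a nontrivial center. Ghys's original codimension-one argument is phrased in the Anosov world, where the transverse dynamics of $W^{s}$ are genuinely hyperbolic; here the transverse dynamics of $W^{cs}$ live in $E^u$ but must compete with the central drift along the leaves, and one has to exploit the strict domination $\|Df|_{E^c}\|<\|(Df|_{E^u})^{-1}\|^{-1}$ carefully so that the $\bar\partial$-obstruction really decays under iteration despite that drift. Making this damping rigorous, presumably via a graph-transform or cocycle-rigidity type estimate on the normal bundle, is where the real technical work lies.
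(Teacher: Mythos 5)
Your first and last steps are sound and coincide with what the paper does: Lemma \ref{leaf} gives that $E^{s},E^{c},E^{u}$ are complex subbundles, the dimension count forces $\dim_{\mathbb C}E^{s}=\dim_{\mathbb C}E^{c}=\dim_{\mathbb C}E^{u}=1$, and once $E^{c}$ is known to be holomorphic the passage to the base and the appeal to Ghys's surface theorem is exactly Section \ref{sec t2}. The problem is that everything in between --- the holomorphicity of $E^{cs}$ and $E^{cu}$, which is the entire content of the theorem --- is deferred: you yourself flag the ``damping'' step as ``where the real technical work lies,'' and the mechanism you propose for it does not survive scrutiny. First, $E^{cs}$ is a priori only H\"older continuous, so the $(1,0)$-form $\omega$ defining it cannot be differentiated and neither $\bar\partial\omega$ nor the Frobenius identity $d\omega=\eta\wedge\omega$ makes sense; the transverse regularity of $E^{cs}$ is governed by bunching inequalities that partial hyperbolicity does not supply, and the paper explicitly notes that even $C^{1}$ regularity of the invariant distributions of holomorphic \emph{Anosov} systems is unknown in general. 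Second, any iteration/graph-transform scheme needs a globally defined holomorphic distribution to feed into the dynamics; such a section of the Grassmannian bundle need not exist on a general compact complex $3$-fold, and starting from local holomorphic data the domains on which the iterates are defined degenerate in the unstable direction, so holomorphicity does not pass to the limit.

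The paper's actual proof shows that the holomorphicity cannot be produced by a contraction/damping mechanism alone. It splits into a dichotomy (Proposition \ref{dic}): when $f$ contracts the center leaves along a subsequence, the center holonomy between unstable plaques is shown to be $1$-quasiconformal at a full-measure set of points and hence holomorphic (Proposition \ref{d2}) --- a quasiconformal-geometry argument, not a $\bar\partial$-estimate. When $f$ is a center isometry there is no transverse damping available at all: the center holonomy is first shown to be only $\mathbb R$-linear in the normal-form charts (Proposition \ref{reli}), after constructing a Gibbs $cu$-state by pushing a Gibbs $u$-state through a center-leafwise heat flow (Proposition \ref{s5}) and invoking the Avila--Viana invariance principle. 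The upgrade from $\mathbb R$-linear to $\mathbb C$-linear is then a moduli-theoretic rigidity statement: the quotient of a center-unstable leaf by a lattice of translations is a holomorphic family of elliptic curves, which by Lemma \ref{cgl} is locally trivial, and the discreteness of $\mathbb C^{\times}\cdot\mathrm{GL}(2,\Lambda)$ (Lemma \ref{cal}) forces the $\mathbb R$-linear holonomies to be $\mathbb C$-linear. This is precisely where the one-dimensionality of $E^{u}$ enters in an essential way, and it is the mechanism that fails in the higher-dimensional counterexamples of Theorem \ref{thm: constru 5d}. Your proposal uses only codimension-one-ness plus domination, which does not capture this rigidity, so the gap is not merely technical.
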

\begin{remark}
For the first claim in Theorem \ref{t2} regarding the regularity of $E^{c, cs,cu}$,
we can prove it under a slightly weaker assumption than fiberedness. Specifically, the center leaves of $f$ are \textit{uniformly compact}, which implies that $M/W^c$ is a Hausdorff space (refer to \cite{boh11} and references therein for discussions on compact center foliations of partially hyperbolic systems). This adaptation requires only  minor changes to the proof \footnote{Bohnet's result \cite{boh11} indicates that if all the center leaves of $f$ are uniformly compact, then by excluding finitely many center fibers, the manifold, while not compact, becomes fibered. The discussions in Section \ref{sec dic} and \ref{sec gib} remain valid. Considering that $C^0$-limits of $\mathbb R$-linear and holomorphic mappings retain their respective properties, the methodologies in Section \ref{sec contra}, \ref{sec con} and \ref{sec d1} are applicable to the uniformly compact case as well. This approach ensures that center holonomy is holomorphic on a dense $c$-saturated subset, then by taking limits we can extend holomorphicity of the center holonomy everywhere and completes the proof.} To avoid heavy notations and redundant repetition of similar arguments, we sketch these modifications  in the footnote for interested readers.

The necessity of $f$ being fibered for the second claim of Theorem \ref{t2} is showed in Section \ref{sec exa}, where we provide an example that the center leaves of $f$ are all compact without the holomorphic skew product structure.
\end{remark}
An intriguing characteristic of the systems described in Theorem \ref{t2} is that, despite $E^c$ being holomorphic, the dynamics of $f$ can remain quite complicated. For instance:
\begin{theorem}\label{thm: 5d acc class}There is a fibered holomorphic partially hyperbolic diffeomorphism $f$ on a complex 3-fold where every accessibility class of $f$
forms a 5-dimensional
(real)-submanifold, which consequently is not a complex submanifold.
\end{theorem}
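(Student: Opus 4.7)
The plan is to exhibit an explicit example as a compact nilmanifold of Kodaira-Thurston type, tailoring the Lie algebra so that the real bracket $[E^s,E^u]$ spans only a real one-dimensional subspace of the real two-dimensional center $E^c$. The accessibility classes will then have real dimension $5$; because complex submanifolds have even real dimension, no such class can be a complex submanifold.

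Let $\mathfrak{n}$ be the real six-dimensional two-step nilpotent Lie algebra with basis $\{A,B,C,D,E,F\}$ and only non-zero brackets $[A,C]=[B,D]=E$, so $\mathfrak{n}\cong\mathfrak{h}_5\oplus\mathbb{R} F$ with $\mathfrak{h}_5$ the real five-dimensional Heisenberg algebra. Define a left-invariant almost complex structure $J$ on the simply connected group $N$ by $JA=B$, $JC=D$, $JE=F$; the essential feature is that $J$ pairs the derived central element $E$ with the non-derived abelian generator $F$, exactly as in the classical Kodaira-Thurston construction. A direct check shows the Nijenhuis tensor vanishes on every basis pair, e.g.\ $N(A,C)=[B,D]-J[A,D]-J[B,C]-[A,C]=E-0-0-E=0$, so $J$ is integrable. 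Next pick a hyperbolic $M_0\in\mathrm{SL}(2,\mathbb{Z})$ with real eigenvalues $\lambda_s,\lambda_u$ satisfying $|\lambda_s|<1<|\lambda_u|$ and $\lambda_s\lambda_u=1$ (e.g.\ the cat map), and let $\phi:\mathfrak{n}\to\mathfrak{n}$ act as the scalar $\lambda_s$ on $E^s:=\mathrm{span}_\mathbb{R}\{A,B\}$, the scalar $\lambda_u$ on $E^u:=\mathrm{span}_\mathbb{R}\{C,D\}$, and the identity on $E^c:=\mathrm{span}_\mathbb{R}\{E,F\}$. Then $\phi$ commutes with $J$ and preserves brackets (e.g.\ $\phi([A,C])=E=\lambda_s\lambda_u E=[\phi A,\phi C]$). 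To obtain a cocompact lattice $\Gamma\subset N$ preserved by $\phi$, I pass to the integer basis $\{v_1,Jv_1,v_2,Jv_2\}$ of the abelianization $V=E^s\oplus E^u$ in which $\phi|_V$ equals $M_0\oplus M_0$; the brackets in this basis become $[v_1,v_2]=[Jv_1,Jv_2]=-E/(\lambda_u-\lambda_s)$ and all others vanish, so after rescaling the central generators by the common factor $1/(\lambda_u-\lambda_s)$ every structure constant is rational and Malcev's theorem yields $\Gamma$. Set $M:=N/\Gamma$ and $f:=\phi\bmod\Gamma$.

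Verification is then direct: $f$ is holomorphic (since $\phi J=J\phi$), partially hyperbolic with splitting $E^s\oplus E^c\oplus E^u$, and fibered over the complex two-torus $V/\Gamma^{\mathrm{ab}}$ with elliptic-curve fibres $\exp(E^c)/(\exp(E^c)\cap\Gamma)$, placing it within the scope of Theorem~\ref{t2}. Because $E^s$ and $E^u$ are abelian subalgebras, the stable and unstable leaves lift to cosets of $\exp(E^s),\exp(E^u)\subset N$, and the accessibility class of any point lifts to the coset of the connected subgroup $H\subset N$ whose Lie algebra is $E^s+E^u+[E^s,E^u]$ (no higher brackets, by two-step nilpotence). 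The decisive computation is
\begin{equation*}
[E^s,E^u]\;=\;\mathrm{span}_\mathbb{R}\bigl\{[A,C],\,[A,D],\,[B,C],\,[B,D]\bigr\}\;=\;\mathrm{span}_\mathbb{R}\{E\},
\end{equation*}
a real one-dimensional subspace of the real two-dimensional $E^c$. Hence the generated subalgebra is $\mathfrak{h}=E^s\oplus E^u\oplus\mathbb{R} E\cong\mathfrak{h}_5$, of real dimension $5$; the subgroup $H$ is the $H_5$-factor of $N$, it meets $\Gamma$ in the standard $H_5(\mathbb{Z})$-lattice, and the accessibility class is a compact immersed real $5$-dimensional submanifold of $M$, which cannot be a complex submanifold for dimension reasons.

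The main obstacle is the simultaneous compatibility of the three structures: in the $J$-adapted stable/unstable basis $(A,B,C,D)$ the map $\phi$ is diagonal with \emph{irrational} scalars, whereas in the $M_0$-integer basis of $V$ the bracket coefficients become the irrational $1/(\lambda_u-\lambda_s)$. Both irrationalities are absorbed by the single rescaling of the central generators above, which is consistent precisely because $\lambda_s+\lambda_u=\mathrm{tr}(M_0)\in\mathbb{Z}$ and $\lambda_s\lambda_u=1$ make $\lambda_s,\lambda_u$ algebraic integers in a real quadratic field; once this is set up, the rest is the Lie-algebra computation above.
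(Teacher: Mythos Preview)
Your construction is essentially the paper's: both work on $H_5(\mathbb{R})\times\mathbb{R}$ modulo a cocompact lattice, equip it with the left-invariant complex structure that pairs the derived central direction with the extra abelian factor, and use an automorphism built from two copies of a hyperbolic $\mathrm{SL}(2,\mathbb{Z})$ matrix acting on $J$-paired $2$-planes; in both cases the subalgebra generated by $E^s$ and $E^u$ is exactly $\mathfrak{h}_5$, giving $5$-dimensional accessibility classes. The only difference is the order of presentation: the paper works directly in an integer basis $e_1,\dots,e_6$ (so the lattice $\Gamma=\exp(\mathrm{span}_{\mathbb Z}\{e_i\})$ is immediate, but one must diagonalize to locate $E^s,E^u$ and compute $[E^s,E^u]$), whereas you start in the eigenbasis $A,B,C,D$ (where $\phi$ is diagonal and the bracket $[E^s,E^u]=\mathbb{R}E$ is obvious) and then change to an integer basis to produce $\Gamma$. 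Your detour is a little longer but makes the verification of holomorphicity and partial hyperbolicity transparent; the paper's route is shorter but leaves those checks implicit.

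One small wording issue: when you say $\phi|_V=M_0\oplus M_0$ ``in the basis $\{v_1,Jv_1,v_2,Jv_2\}$'', the blocks must act on $\mathrm{span}\{v_1,v_2\}$ and $\mathrm{span}\{Jv_1,Jv_2\}$ (not on $\mathrm{span}\{v_1,Jv_1\}$ and $\mathrm{span}\{v_2,Jv_2\}$), since otherwise $\phi$ would not commute with $J$; your bracket computations are consistent with the correct interpretation. It is also worth noting that the paper's printed relations $[e_1,e_2]=[e_3,e_4]=e_5$ are incompatible with its stated $f$ (one computes $[f e_1,f e_2]=5e_5\neq e_5$); the intended relations are $[e_1,e_3]=[e_2,e_4]=e_5$, which is exactly what your integer basis $\{v_1,Jv_1,v_2,Jv_2,E',F'\}$ produces after the rescaling of the center---so your more careful bookkeeping in fact repairs a typo.
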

For the definition of accessibility classes and more details about this example, see Section \ref{sec exa}.

\subsection{Sketch of the proof and plan of the article}\label{Subsec: sketch/plan} The novelty of our article is to connect regularity of dynamically-invariant distributions to \textbf{heat flows} in Riemannian geometry and \textbf{deformations} of complex tori in complex geometry. We provide a sketch of the proofs and the structure of our paper as follows.

After the Preliminaries (Section \ref{pre}), the paper is divided into two main parts. The first part addresses the proof of Theorem \ref{t2}, and the second part is devoted to the construction of examples that satisfy Theorem \ref{thm: constru 5d} and Theorem \ref{thm: 5d acc class}.
Demonstrating regularity of dynamically invariant distributions could be challenging, and we tackle this by classifying the dynamics of 
$f$ on the center foliation into \textit{isometries} and \textit{contractions}. 
The proof for the contraction case is based on certain holomorphic rigidity properties and arguments in quasiconformal geometry (see Section \ref{sec contra}). The isometric case,  on the other hand, involves some new ideas and careful approaches:
\\
\paragraph{\textbf{Uniformly quasiconformal partially hyperbolic systems, heat flows, the existence of Gibbs $cu$-states and $C^\infty$ property of $E^c$.}}

The first step is to adapt the theory of quasiconformal geometry and the study of uniformly quasiconformal partially hyperbolic systems \cite{bx18} to our context and show $E^c$ is $C^\infty$. In \cite{bx18}, the first-named author and C. Butler demonstrated that, under volume-preserving conditions, if a fibered partially hyperbolic system $f$
is uniformly quasiconformal (see Section \ref{subsec qc}) and $\infty$-bunched (basically means the sub-exponential growth of $Df^n|_{E^c}$), then $E^c$ is 
$C^\infty$. However, direct applying of \cite{bx18} to our situation is not straightforward, as holomorphic diffeomorphisms do not necessarily preserve volume. We therefore expand the applicability of \cite{bx18} to include certain dissipative settings 
(see Proposition \ref{s5} and Proposition \ref{reli}). 

A crucial element of our aprroach here is employing of a center-leafwise heat flow to establish the existence of Gibbs $cu/cs$-states. Unlike Gibbs-$u$ state, constructing Gibbs-$cu/cs$ states for partially hyperbolic systems lacks a universal method and they may not exist. This part may be interesting on its own and could lead to new uses in the study of partially hyperbolic systems. The detailed arguments for this approach are in Section \ref{sec con}, which are adaptable for more general settings.
\\
\paragraph{\textbf{Deformation of complex structures (I): moduli space of $\mathbb T^2$ and holomorphicity of $E^c$ for $3$d case.}}
Section \ref{sec d1} constitutes the second step of the proof of Theorem \ref{t2}. With the constraints of lower dimensions as outlined in Theorem \ref{t2}, we carefully construct translation maps in the center-unstable (center-stable) leaf via the dynamics. These dynamically defined maps have nice geometric properties and establish an unexpected linkage between the regularity of the center foliation and holomorphic maps from the center leaves to $\mathrm{Mod}(\mathbb{T}^2)$, the moduli space (space of complex structures) of $\mathbb T^2$. According to a theorem in complex geometry, such maps have to be constant, which forces the center foliation to be holomorphic.
\\
\paragraph{\textbf{Deformation of complex structures (II): Non-K\"ahler complex structures on higher dimensional tori and constructions of non-holomorphic center distributions}.}
In the realm of higher-dimensional cases, the procedure for constructing translation maps in the lower-dimensional case does not work. Even if the well-behaved translation maps do exist and we have a similar linkage of the regularity of the center foliation to maps from the center leaves to $\mathrm{Mod}(\mathbb{T}^{2n})$ $(n \geq 2)$, the moduli space is ``large"  enough to support certain non-trivial maps, which implies non-holomorphic center distributions in Theorem \ref{thm: constru 5d}. The detailed construction is in Section \ref{sec non holo ex}. We start with the construction of Blanchard-Calabi manifolds, i.e., a non-K\"ahler complex structure on $\mathbb{T}^{2n+2}$ $(n \geq 2)$, and then modify the construction to make it support holomorphic fibered partially hyperbolic systems.

{\medskip}
In Section \ref{sec exa} we show some interesting examples of partially hyperbolic holomorphic diffeomorphisms, including that satisfies Theorem \ref{thm: 5d acc class}.  Section \ref{sec exa} also discusses examples of accessible fibered holomorphic partially hyperbolic system on Iwasawa manifolds.

\subsection{Further problems}
Theorem \ref{t2} leads us to propose the following conjecture about holomorphic partially hyperbolic systems of dimension $3$. Recall that holomorphic Anosov flows were defined and partially classified in \cite{ghy95}. Notably, the time-$t$ map of a holomorphic Anosov flow is partially hyperbolic.\footnote{In \cite{ghy95}, Ghys defined holomorphic Anosov flows using a $\mathbb{C}^{\ast}$-action, so to get a genuinely partially hyperbolic diffeomorphism, $\|t\| \neq 1$.}
\begin{conjecture}\label{conj: class 3d ph}
All holomorphic partially hyperbolic diffeomorphisms on 3-manifolds (up to a finite cover) are holomorphically conjugate to one of the following:
\begin{itemize}
  \item Automorphisms on complex tori;
  \item Holomorphic skew products over linear Anosov automorphisms;
  \item Time-$t$ maps of holomorphic Anosov flows.
\end{itemize}
\end{conjecture}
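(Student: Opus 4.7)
The plan is to perform a trichotomy on the center distribution and reduce each branch either to Theorem \ref{t2}, to a $3$-dimensional case of Ghys's Conjecture \ref{conj: Ghys}, or to the classification of holomorphic Anosov flows from \cite{ghy95}. Let $f:M\to M$ be a holomorphic partially hyperbolic diffeomorphism on a compact complex $3$-fold. Since $f$ is holomorphic, $E^s$, $E^c$, $E^u$ are complex subbundles of $TM$; partial hyperbolicity forces $\dim_{\mathbb{C}}E^s=\dim_{\mathbb{C}}E^u=1$, so either $E^c=0$ or $\dim_{\mathbb{C}}E^c=1$.

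When $E^c=0$, the conjecture reduces to Ghys's Conjecture \ref{conj: Ghys} in complex dimension $3$, which is itself open; one would combine Ghys's codimension-one holomorphicity of $E^{u/s}$ with Cantat's arguments in \cite{can04} to attack it, but this piece of the problem is not special to the partially hyperbolic setting. When $\dim_{\mathbb{C}}E^c=1$, I would split according to whether every center leaf is compact. If they are, the remark following Theorem \ref{t2} together with Bohnet's finite-cover result \cite{boh11} promotes $f$ to a fibered system on a finite cover, and Theorem \ref{t2} then yields the holomorphic skew product over a linear automorphism of a complex $2$-torus, matching the second bullet.

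The remaining case, where $E^c$ has non-compact leaves, is the delicate one and would occupy the bulk of the proof. The goal is to integrate $E^c$ to the orbits of a holomorphic $\mathbb{C}$-flow that commutes with $f$ up to a multiplicative factor, thereby realizing $f$ as a time-$t$ map of a holomorphic Anosov flow in the sense of \cite{ghy95}. First I would adapt the heat-flow and Gibbs $cu/cs$-state machinery of Section \ref{sec con} to the non-fibered setting, replacing compactness of center leaves by recurrence along non-compact leaves, to obtain smoothness and ultimately holomorphicity of $E^c$. Next, one trivializes the holomorphic line bundle $E^c$ on $M$ by a nowhere-vanishing holomorphic vector field $X$; the $f$-invariance of $E^c$ then forces $f^{\ast}X=\lambda X$ for some $\lambda\in\mathbb{C}^{\ast}$ with $|\lambda|\neq 1$, and the flow of $X$ is the desired holomorphic Anosov flow.

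The main obstacle is this last case. Without the fibered hypothesis the elliptic-fibration arguments of Section \ref{sec d1} do not apply, so one must either uniformize the center leaves directly or find an $f$-invariant holomorphic geometric structure on $M$ that selects a vector field in $E^c$; moreover, holomorphic triviality of the line bundle $E^c$ on $M$ has to be established rather than assumed, and the absence of compact leaves must be leveraged to rule out obstructing cohomology classes. The hope is that in complex dimension $3$ the combined constraints of partial hyperbolicity and holomorphy are rigid enough to force this trivialization, mirroring the way the elliptic fibration structure emerged in the proof of Theorem \ref{t2}.
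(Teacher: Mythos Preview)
The statement you are addressing is labelled a \emph{conjecture} in the paper, and the paper contains no proof of it; immediately after stating it the authors write that it ``may be challenging since the corresponding conjecture for the Anosov case remains open.'' So there is no proof in the paper to compare your proposal against, and your outline should be read as a research program rather than a proof.

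That said, your trichotomy is the natural one, and you correctly identify the two genuinely open pieces. A few specific issues: first, the assertion that partial hyperbolicity forces $\dim_{\mathbb C}E^s=\dim_{\mathbb C}E^u=1$ is only correct when $E^c\neq 0$; in the Anosov case one could have a $(1,2)$ or $(2,1)$ splitting, and this is precisely the unresolved higher-codimension situation of Conjecture~\ref{conj: Ghys}. Second, in the compact-center branch you invoke a finite-cover result of Bohnet to pass to a fibered system, but the paper only says this is \emph{expected} in general (see the paragraph after the definition of fibered systems); the remark after Theorem~\ref{t2} handles the uniformly compact case directly without passing to a cover, so you would need either that or an actual finite-cover theorem. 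Third, in the non-compact branch you tacitly assume $E^c$ is integrable before discussing ``center leaves''; dynamical coherence is not automatic here and would itself need to be established. Finally, the step where you trivialize $E^c$ by a global holomorphic section is a real obstruction: there is no reason a priori for the holomorphic line bundle $E^c$ to be trivial on $M$, and without this you cannot produce the vector field $X$ generating the flow.

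In short, your decomposition matches what one would expect, but each branch other than the fibered one contains an open problem, which is exactly why the authors present this as a conjecture.
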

Conjecture \ref{conj: class 3d ph} may be challenging since the corresponding conjecture for the Anosov case remains open: 
Ghys \cite{ghy95} proved that a transitive holomorphic Anosov diffeomorphism on a complex 3-fold is topologically conjugate to an automorphism of complex tori, but it remains unknown whether such conjugacy is holomorphic. In addition, as we mentioned, it is not easy to classify the second class in Conjecture \ref{conj: class 3d ph}, see also Section \ref{sec exa}.

From Theorem \ref{thm: constru 5d}, we believe that pathological center foliations are unlikely to exist in the holomorphic context, leading to the following conjecture:
\begin{conjecture}
The center distribution of a holomorphic partially hyperbolic diffeomorphism is real analytic. 
\end{conjecture}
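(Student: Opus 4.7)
The plan is to establish real analyticity of $E^c$ via a two-step bootstrap: first upgrade $E^c$ to $C^\infty$ regularity through bunching and quasiconformal arguments (in the spirit of Section \ref{sec con} of this paper, but without the fibered hypothesis), and then promote $C^\infty$ to real analytic by exploiting the underlying holomorphic structure. Since $f$ is holomorphic, the derivative cocycle $Df$ acts $\mathbb{C}$-linearly on the holomorphic tangent bundle $T^{1,0}M$, which forces strong automatic regularity at the infinitesimal level even when $E^c$ itself is not a complex subbundle.

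For the first step, I would extend the uniformly quasiconformal partially hyperbolic framework \cite{bx18}, as adapted to the dissipative holomorphic setting in Proposition \ref{s5} and Proposition \ref{reli}, beyond the fibered case. Holomorphicity of $f$ implies that $Df|_{E^u}$ and $Df|_{E^s}$ are conformal in the complex sense and therefore uniformly quasiconformal in the real sense; moreover, the complex structure constrains the center Lyapunov spectrum so that the $\infty$-bunching condition is automatic. Once Gibbs $cu/cs$-states are constructed via a center-leafwise heat-flow argument (which does not require compact center leaves), the standard regularity machinery should yield $E^c \in C^\infty$.

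For the upgrade from $C^\infty$ to real analytic, the plan is to combine Journ\'e's regularity lemma with a leafwise analyticity argument. The distribution $E^c$ is automatically real analytic along the unstable and stable leaves, because these leaves are complex submanifolds on which $f$ acts holomorphically, and the invariance equation $Df(E^c)=E^c$ can be solved as a fixed-point problem in a Banach space of real analytic sections where holomorphic contraction controls the radius of convergence. It then suffices to establish real analyticity of $E^c$ along center leaves (or center-stable/center-unstable leaves); given joint regularity in all three directions, Journ\'e's theorem upgrades to real analyticity on $M$.

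The main obstacle is exactly this leafwise real analyticity in the center direction, where hyperbolic contraction is absent and the radius of convergence of a putative analytic extension is not a priori uniform. I would attempt the dichotomy used in the fibered case (Section \ref{sec contra}): in the isometric subcase, analytically continue the center cocycle $Df|_{E^c}$ to a complex neighborhood and show that it preserves a holomorphic family of distributions parametrised by base points, which together with $C^\infty$ regularity forces real analyticity; in the contracting subcase, use holomorphic linearisation along center orbits. Unifying these strategies without a globally defined compact center foliation --- and handling the possibility that $W^c$ need not even be well-defined in general partial hyperbolicity --- is what makes the conjecture genuinely difficult, and is where I would expect the argument to require a new idea.
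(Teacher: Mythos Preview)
This statement is a \emph{conjecture} in the paper, appearing in the ``Further problems'' subsection; the paper offers no proof, only motivation from Theorem \ref{thm: constru 5d}. So there is nothing to compare your attempt against --- you are proposing a strategy for an open problem, and your own final paragraph correctly identifies it as such.

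That said, several of the intermediate claims in your plan are not justified and would fail as stated. First, holomorphicity of $f$ does \emph{not} force $Df|_{E^u}$ and $Df|_{E^s}$ to be conformal unless $\dim_{\mathbb C} E^u = \dim_{\mathbb C} E^s = 1$; in higher complex dimension a $\mathbb C$-linear map need not be conformal, so the uniformly quasiconformal framework of \cite{bx18} is not automatically available. Second, the heat-flow construction of Gibbs $cu$-states in Proposition \ref{s5} runs the heat semigroup on \emph{compact} center leaves; without compactness there is no finite-volume equilibrium for $P_t$ to converge to, so your assertion that the argument ``does not require compact center leaves'' is incorrect. Third, there is no version of Journ\'e's lemma that upgrades from leafwise real-analytic to globally real-analytic in the generality you invoke; Proposition \ref{rjou} is a $C^\infty$ statement, and even the holomorphic version (Proposition \ref{cjou}) uses Cauchy's formula in an essential way that has no real-analytic analogue without uniform complex extensions. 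Finally, in the general (non-fibered, non-dynamically-coherent) setting there may be no center foliation at all, so talking about ``real analyticity along center leaves'' is premature.

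In short: the paper does not claim to prove this, and your outline is a reasonable wish-list but not a proof; the steps you label as routine (conformality, heat flow without compactness, real-analytic Journ\'e) each require genuinely new input.
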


In particular, considering that most of ``interesting" examples we construct in the paper are not on K\"ahler manifolds, it is natural to ask: 
\begin{conjecture}
The center distribution of a holomorphic partially hyperbolic diffeomorphism on a K\"ahler manifold is holomorphic. 
\end{conjecture}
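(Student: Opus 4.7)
The plan is to attack this conjecture by combining the machinery developed for Theorem \ref{t2} with the cohomological rigidity afforded by the K\"ahler hypothesis, focussing first on the fibered case where the methods of the paper apply. Following the dichotomy used in the proof of Theorem \ref{t2}, I would split the analysis into the \emph{contracting} and \emph{isometric} subcases for the dynamics along center leaves. The contracting subcase should go through with only cosmetic modifications, since the holomorphic-rigidity and quasiconformal arguments of Section \ref{sec contra} are essentially dimension-free. For the isometric subcase I would first apply the heat-flow and Gibbs $cu$-state construction from Section \ref{sec con}, which extends without change to this regime since holomorphic isometries are automatically $1$-quasiconformal, to upgrade $E^c$ to $C^\infty$ regularity.

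The genuinely new input must come from the K\"ahler hypothesis. The counterexamples of Theorem \ref{thm: constru 5d} live on Blanchard-Calabi manifolds whose non-K\"ahlerness is essential: they are built by twisting together a smoothly but non-holomorphically varying family of complex tori, precisely the kind of construction that Hodge theory on a K\"ahler manifold should obstruct. Exploiting the K\"ahler form $\omega$ on $M$, I would polarize each compact center leaf $L$ by the class $[\omega|_L]\in H^{1,1}(L,\mathbb{R})$, obtaining a smoothly varying polarization of the torus fibers. Combined with $f$-invariance of the fibration and classical results on moduli of polarized abelian varieties, one would then try to show that the moduli map from the base $\hat M$ to the Siegel moduli space is holomorphic, thereby promoting $E^c$ from $C^\infty$ to holomorphic and simultaneously forcing the center leaves to be abelian varieties.

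The main obstacle I expect lies in this last upgrade: a priori the leafwise polarization class $[\omega|_L]$ varies only smoothly in $L$, so the step from a smooth moduli map to a holomorphic one requires using the $f$-equivariance together with the hyperbolic contraction on $E^u\oplus E^s$ to average out any transverse non-holomorphicity. This is in the spirit of the isometric-base step in Theorem \ref{t2}, but with the essential difference that in higher dimensions the moduli space of fibers is no longer small enough to force rigidity on its own. If a direct averaging approach fails, a reasonable intermediate target is the conjecture under the extra assumption that the base map $\hat f$ is already holomorphically conjugate to a linear automorphism of a complex torus, reducing the global problem to a cocycle rigidity question over an abelian base that should be more tractable. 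The non-fibered case, and more broadly the case without any compactness hypothesis on the center foliation, appears to lie beyond current technology and would likely require fundamentally new ideas.
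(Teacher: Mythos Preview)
This statement is a \emph{conjecture} in the paper, listed among the ``Further problems''; the paper offers no proof and no proof sketch for it. So there is nothing to compare your proposal against, and what you have written is (appropriately) a research plan rather than a proof. That said, several of your structural claims are inaccurate and would derail the program as stated.

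First, your assertion that the contracting-case arguments of Section~\ref{sec contra} are ``essentially dimension-free'' is not correct. Those arguments rest on the non-stationary linearization of Proposition~\ref{affine} and the quasiconformality of center holonomy (Proposition~\ref{cac}), both of which are available in the paper only because $\dim_{\mathbb C} E^u = 1$ forces $Df|_{E^u}$ to be conformal. In higher codimension a holomorphic partially hyperbolic diffeomorphism need not be uniformly $u$-quasiconformal, so neither the normal forms nor the quasiconformal estimates on $h^c$ are available without a substantial new argument. The same remark applies to the isometric case: the linearity statement of Proposition~\ref{reli} (and hence the passage to $C^\infty$ regularity of $E^c$) is proved under a uniform $u$-quasiconformality hypothesis (see Remark~\ref{rs5m}), which you have not secured.

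Second, your K\"ahler step tacitly assumes the center leaves are complex tori (``torus fibers'', ``abelian varieties''). Nothing in the hypotheses forces this: already in the $3$-fold case the paper has to treat separately the cases $p_g=0$ (leaves $\mathbb P^1$), $p_g=1$, and $p_g\ge 2$, and in higher dimension the possibilities for the center fiber are far richer. The polarization/moduli idea you sketch only makes sense once you have reduced to torus fibers, and that reduction is itself a nontrivial step you have not addressed. Your instinct that the K\"ahler hypothesis should obstruct the Blanchard--Calabi mechanism is reasonable, but turning that into a proof will require handling arbitrary compact complex center fibers, not just abelian ones.
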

And another natural problem follows Theorem \ref{t2} is:
\begin{problem}
Is there  a compact complex $4$-fold support holomorphic fibered partially hyperbolic system with non-holomorphic center distribution?  
\end{problem}

{\noindent}\tmtextbf{Acknowledgment. }\color{red}The authors would first and foremost like to thank the anonymous referee for the careful reading our initial draft.
\color{black}The authors would like to thank Yuxiang Jiao,
Junyi Xie and Ziyi Zhao for helpful discussions. D. X. would like
to thank Artur Avila and Serge Cantat for useful suggestions at the early
stage of this work. {\medskip}

\section{Preliminaries}\label{pre}

\subsection{Preliminaries in complex geometry}

This subsection contains some basic results and discussions in complex geometry for the convenience of readers (see for example \cite{voi02} for more details).

A \textit{complex manifold} is a manifold with a complex structure, i.e., an atlas of charts to open sets in $\mathbb{C}^n$ such that the transition maps are holomorphic. A \textit{Hermitian manifold} is the complex analogue of a Riemannian manifold. More precisely, a Hermitian manifold is a complex manifold $M$ with a $C^\infty$-ly varying Hermitian inner product $h$ on the tangent space at each point. The real part $h$ defines a Riemannian metric $g$ on the underlying smooth manifold:
\[ g = \frac{1}{2} (h + \bar{h}). \]
The length, volume, and all the corresponding concepts in Riemannian geometry for a Hermitian manifold are with respect to this induced metric $g$. 

A differentiable map $f:N\to M$ between two complex manifolds is holomorphic if and only if $D f(i v)= i D f(v)$ for any $v\in TN$. A $C^1$ real submanifold $N$ of a complex manifold is called a \textit{complex submanifold} if it admits a complex structure such that the inclusion map is holomorphic, or, equivalently, $T_x N \subset T_x M$ is a complex subspace for any $x\in N$. 

In particular, the leaves of dynamically defined foliations are all complex manifolds. 
\begin{lemma}\label{leaf}
If the foliation $W^\ast$ ($\ast=s,u,c,cs,cu$) exists, then the leaves of $W^\ast$ are complex submanifolds of $M$.
\end{lemma}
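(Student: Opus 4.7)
The plan is to reduce the statement to showing that each of the distributions $E^s, E^c, E^u, E^{cs}, E^{cu}$ is invariant under the almost complex structure $J$ on $TM$. Once this is done, the criterion recalled immediately before the lemma — that a $C^1$ real submanifold of a complex manifold is a complex submanifold precisely when each of its tangent spaces is a $J$-invariant subspace of the ambient tangent space — delivers the conclusion at once, since the leaves of $W^\ast$ are by hypothesis $C^1$ immersed submanifolds with tangent spaces $E^\ast$.

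To show $J$-invariance, I would exploit two features of holomorphic dynamics on a Hermitian manifold. First, since $f$ is holomorphic, its differential is complex linear: $Df \circ J = J \circ Df$, and hence $Df^k \circ J = J \circ Df^k$ for every $k \in \mathbb{Z}$. Second, because the Riemannian metric $g$ is the real part of a Hermitian metric, $J$ acts as a $g$-isometry, so $\|Jv\| = \|v\|$ for every tangent vector $v$. Combining the two facts yields $\|Df^k(Jv)\| = \|J\,Df^k v\| = \|Df^k v\|$, which is the key observation: $v$ and $Jv$ have identical contraction and expansion rates under all forward and backward iterates.

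Consequently, the splitting $TM = JE^s \oplus JE^c \oplus JE^u$ is again a continuous $Df$-invariant splitting satisfying exactly the same domination inequalities as the given splitting $TM = E^s \oplus E^c \oplus E^u$. By uniqueness of the partially hyperbolic splitting, $JE^\ast = E^\ast$ for $\ast \in \{s, c, u\}$, and then $JE^{cs} = E^{cs}$, $JE^{cu} = E^{cu}$ follow since $E^{cs} = E^c \oplus E^s$ and $E^{cu} = E^c \oplus E^u$. Thus every $E^\ast(x)$ is a complex subspace of $T_x M$, and the criterion above finishes the proof.

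I do not anticipate a substantive obstacle: the argument is a direct combination of the complex linearity of $Df$, the $J$-isometry property of the Hermitian metric, and the standard uniqueness of the dominated splitting in partial hyperbolicity. The only point requiring mild care is that one must use a genuinely Hermitian metric (so that $\|Jv\| = \|v\|$), rather than an arbitrary Riemannian metric adapted to $f$; this is harmless because existence and uniqueness of the invariant splitting do not depend on the choice of background metric.
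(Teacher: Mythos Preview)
Your proposal is correct and follows essentially the same line as the paper's proof: both reduce to showing that $E^\ast$ is $J$-invariant by observing that $\|Df^k(Jv)\|=\|Df^k v\|$ for all $k$, and then invoke the criterion that a $C^1$ submanifold with complex tangent spaces is a complex submanifold. Your version simply spells out the uniqueness-of-splitting argument more explicitly, whereas the paper compresses the step ``$\|Df(iv)\|=\|Df(v)\|\Rightarrow iv\in E^\ast$'' into a single ``Consequently''.
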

\begin{proof}
    It follows from the definition that the leaves of $W^\ast$ are $C^1$ submanifolds of $M$. Since $f$ is holomorphic, for any $v\in E^\ast$, we have $\|D f(i v)\|=\|i D f(v)\|=\|D f(v)\|$. Consequently $i v \in E^\ast$, $E^\ast$ is a complex subspace and the leaves of $W^\ast$ are complex submanifolds.
\end{proof}
Let $M$ be a smooth manifold. An \textit{almost complex structure} $J$ on $M$ is a linear complex structure (that is, a linear map which squares to $-1$) on tangent space at each point of the manifold, which varies $C^\infty$-ly on the manifold. A manifold equipped with an almost complex structure is called an \textit{almost complex manifold}.

Every complex manifold induces a natural almost complex structure.
However, not every almost complex structure comes from a complex manifold. An almost complex structure $J$ that comes from the complex manifold is called \textit{integrable}. Such a $J$ is also called a \textit{complex structure}. The Newlander-Nirenberg theorem gives a criterion for whether an almost complex structure is integrable.

\begin{theorem}[Newlander-Nirenberg {\cite{nn57}}]
\label{nn}
An almost complex structure $J$ is integrable if and only if the Nijenhuis tensor
\[ N_J (X, Y) := [X, Y] + J ([J X, Y] + [X, J Y]) - [J X, J Y] \]
vanishes everywhere for any smooth vector fields $X$ and $Y$ on $M$.
\end{theorem}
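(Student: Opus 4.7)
The plan is to handle the two implications separately, with almost all of the difficulty living in the ``if'' direction. For the easy direction, if $J$ already comes from a complex atlas, then in any holomorphic chart $(z_j)=(x_j+iy_j)$ the operator $J$ acts as $J(\partial_{x_j})=\partial_{y_j}$, $J(\partial_{y_j})=-\partial_{x_j}$. A direct computation on these coordinate vector fields shows that each bracket appearing in $N_J$ vanishes, so $N_J(\partial_{x_j},\partial_{x_k})=N_J(\partial_{x_j},\partial_{y_k})=N_J(\partial_{y_j},\partial_{y_k})=0$, and since $N_J$ is tensorial in its two arguments it vanishes identically on $M$.

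For the converse, I would first recast the hypothesis on $N_J$ in the complexified tangent bundle. Extend $J$ complex-linearly to $TM\otimes\mathbb{C}$ and decompose it into the $\pm i$-eigenbundles $T^{1,0}M$ and $T^{0,1}M=\overline{T^{1,0}M}$. A short computation, using that any element of $T^{0,1}M$ is of the form $X+iJX$ for $X$ real, shows that $N_J\equiv 0$ is equivalent to the involutivity of $T^{0,1}M$ under the Lie bracket, $[T^{0,1}M,T^{0,1}M]\subset T^{0,1}M$. To produce a complex manifold structure that induces $J$, it is then enough to construct, near every point $p\in M$, smooth complex-valued functions $z_1,\dots,z_n$ with $dz_1\wedge\dots\wedge dz_n\neq 0$ at $p$ and with $dz_j(\overline{X})=0$ for every $X\in T^{1,0}M$, i.e.\ $\bar{\partial}_J z_j=0$. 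Any such $n$-tuple defines a chart to $\mathbb{C}^n$ that intertwines $J$ with the standard structure, and two such charts differ by a biholomorphism, producing the desired atlas.

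The remaining step is the construction of these first integrals, and this is the substantive core of the theorem. In the real-analytic category one may invoke the complex Frobenius theorem (via Cauchy--Kowalevski) directly to the involutive complex distribution $T^{0,1}M$ and obtain $z_1,\dots,z_n$ immediately. In the $C^\infty$ category, however, $T^{0,1}M$ is not the complexification of any real distribution, so the classical real Frobenius theorem does not apply. The standard strategy is to choose real coordinates so that $T^{0,1}M$ is a small perturbation of the standard $\bar{\partial}$-distribution on a ball in $\mathbb{C}^n$, rewrite the system $\bar{\partial}_J z_j=0$ as a nonlinear inhomogeneous $\bar{\partial}$-equation whose compatibility condition is exactly $N_J=0$, and then solve it by combining the known $L^2$ or H\"older estimates for $\bar{\partial}$ on strongly pseudoconvex domains with a contraction-mapping or Nash--Moser iteration that produces smooth solutions.

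The main obstacle is precisely this last PDE step: although involutivity provides the formal integrability ($\bar{\partial}_J^{\,2}=0$ on functions), producing genuine $C^\infty$ solutions from a purely formal compatibility requires a genuine piece of linear and nonlinear elliptic analysis, and it is this analytic input, rather than the algebraic computation relating $N_J$ to involutivity, that constitutes the real content of Newlander--Nirenberg.
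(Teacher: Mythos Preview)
The paper does not prove this theorem at all: it is stated in the preliminaries as a classical result and attributed to Newlander--Nirenberg \cite{nn57}, with no argument supplied. So there is no ``paper's own proof'' to compare your proposal against.

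Your outline is, in fact, a faithful sketch of the standard route to Newlander--Nirenberg: the easy direction via a coordinate computation, the reformulation of $N_J\equiv 0$ as involutivity of $T^{0,1}M$, the reduction to producing local $J$-holomorphic coordinate functions, and the acknowledgment that the genuine content lies in the elliptic analysis needed to solve the relevant $\bar{\partial}_J$-system in the $C^\infty$ category. As a proof \emph{plan} this is accurate, but as written it is not a proof: you explicitly defer the crucial PDE step (the existence of smooth local solutions to $\bar{\partial}_J z_j=0$) to ``known estimates'' and an unspecified iteration, which is exactly where all the work lives. If the goal were to actually prove the theorem, this is the gap you would have to fill; if the goal is only to match what the paper does, then you have done more than the paper, which simply quotes the result.
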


\subsection{Topological facts about fibered partially hyperbolic system}\label{fphs}

A partially hyperbolic diffeomorphism $f$ is called \textit{dynamically coherent} if there exist a center-stable foliation $W^{cs}$ and a center-unstable foliation $W^{cu}$ tangent to the bundles $E^{cs}=E^c \oplus E^s$ and $E^{cu}=E^c \oplus E^u$, respectively.

\begin{proposition}[Theorem 1 in \cite{bb16}]
\label{coh}
Fibered partially hyperbolic diffeomorphisms are dynamically coherent.
\end{proposition}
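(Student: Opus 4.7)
The plan is to exploit the fibration $\pi: M \to \hat{M} := M/W^c$ to reduce the problem to an Anosov base map, and then pull back the resulting invariant foliations. First, I would check that $f$ descends to a well-defined homeomorphism $\hat{f}: \hat{M} \to \hat{M}$. This is immediate from the fact that $W^c$ is $f$-invariant (being a dynamical object), and continuity of $\hat{f}$ follows from the fibration structure: each fiber has a saturated neighborhood modeled as a product, and $f$ maps fibers to fibers in a way that depends continuously on the basepoint.

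Next, I would show that the partially hyperbolic splitting descends to a genuinely hyperbolic splitting over $\hat{M}$. Pick any continuous section of $\pi$ locally and observe that $E^s$ and $E^u$, being transverse to $E^c$, project isomorphically onto a continuous $D\hat{f}$-invariant splitting $\hat{E}^s \oplus \hat{E}^u$ of the tangent bundle of $\hat{M}$ (in the $C^1$ sense available on a topological manifold where $\hat{f}$ can be shown to be a $C^1$ homeomorphism along transversals, using the smoothness of $f$ and the Hölder holonomy of $W^c$). The contraction/expansion rates inherited from $E^s$ and $E^u$ make $\hat{f}$ topologically Anosov; by Hadamard–Perron applied along transversals to $W^c$, one obtains $\hat{f}$-invariant stable and unstable foliations $\hat{W}^s$ and $\hat{W}^u$ on $\hat{M}$. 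Then the candidate coherent foliations are $W^{cs} := \pi^{-1}(\hat{W}^s)$ and $W^{cu} := \pi^{-1}(\hat{W}^u)$; tangency to $E^{cs}$ and $E^{cu}$ follows because $\pi$ identifies the quotient tangent spaces with $E^s$ and $E^u$ up to the kernel $E^c$.

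An equivalent but more intrinsic route, which I think is what Bohnet–Bonatti \cite{bb16} carry out, is to define $W^{cs}$ as the $W^c$-saturation of $W^s$ and $W^{cu}$ as the $W^c$-saturation of $W^u$. Invariance under $f$ is automatic; tangency to $E^{cs}$ (resp. $E^{cu}$) follows once one knows the union is locally a topological submanifold. The fibration assumption provides local product neighborhoods $U \cong D^c \times D^{\perp}$ in which $W^c$ appears as the horizontal disks, and classical stable-manifold theory gives $W^s$ as a continuous family of disks transverse to the fibers; their union then has the desired product-chart structure.

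I expect the main obstacle to be proving that the saturation is actually a \emph{foliation} rather than just a continuous $f$-invariant partition tangent to $E^{cs,cu}$. Concretely, one must prove that $W^s$ (and $W^u$) respect the holonomy of $W^c$ well enough for the saturated sets to form a locally trivial lamination. This requires using that $\hat{M}$ is a genuine manifold (guaranteed by the fibered hypothesis) and that the center holonomy maps are homeomorphisms between local transversals, so that stable/unstable disks on different transversals match up under holonomy. Once this topological compatibility is established, uniqueness of integral leaves of $E^{cs}, E^{cu}$ gives dynamical coherence. A delicate side issue is the possible lack of $C^1$-regularity of the base manifold's smooth structure; this is circumvented by working with the natural transverse smooth structure inherited from $M$, which is exactly what the fibered hypothesis affords.
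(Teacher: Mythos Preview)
The paper does not prove this proposition: it is stated as a citation of Theorem~1 in \cite{bb16} (Bohnet--Bonatti) and no argument is given in the paper itself. There is therefore nothing in the paper to compare your proposal against.

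For what it is worth, your second route (defining $W^{cs}$ and $W^{cu}$ as the $W^c$-saturations of $W^s$ and $W^u$, and then verifying that these are genuine foliations) is indeed the strategy carried out in \cite{bb16}, and you have correctly identified the main technical point: one must show the saturated sets are locally trivial laminations, which uses the fibration hypothesis to guarantee that $\hat M$ is a manifold and that center holonomies match up stable disks on different transversals. Your first route is more problematic as written: $\hat M$ is a priori only a topological manifold, so speaking of $D\hat f$ and a hyperbolic splitting on $T\hat M$ requires justification you have not supplied; the workaround via ``transverse smooth structure'' is exactly what forces one back to the saturation argument anyway.
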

If $f$ is dynamically coherent partially hyperbolic diffeomorphism, then each leaf of $W^{cs}$ is simultaneously subfoliated by $W^c$ and $W^s$. In particular, for any $x \in M$, $y \in W^s(x)$, there is a neighborhood $U(x) \subset W^c(x)$ of $x$ and a local homeomorphism $h^s_{xy} : U(x) \rightarrow W^c(y)$ such that $h^s_{xy}(z) \in W^s(z) \cap W^c_{\mathrm{loc}}(y)$. We call $h^s_{xy}$ a (local) stable holonomy map, and similarly, we can define the (local) unstable holonomy map $h^u$. Since the stable and unstable leaves are contractible, $h^{\ast}_{xy}$ ($\ast = s, u $) is well-defined and uniquely determined as a germ by $x$ and $y$.

We say that a dynamically coherent partially hyperbolic diffeomorphism $f$ admits \textit{global stable holonomy maps} if for any $x \in M$, $y \in W^s(x)$, there exists a globally defined homeomorphism $h^s_{xy} : W^c(x) \rightarrow W^c(y)$ such that $h^s_{xy}(z) = W^s(z) \cap W^c(y)$. Since global holonomy maps coincide locally with local holonomy maps, we use $h^s_{xy}$ to denote both local holonomy maps and global holonomy maps. Similarly, we can define global unstable holonomy maps $h^u$ and say that $f$ \textit{admits global $su$-holonomy maps} if $f$ admits both global stable and unstable holonomy maps. In particular, if $f$ admits global $su$-holonomy maps, then all leaves of $W^c$ are homeomorphic.

\begin{proposition}[Lemma 3.5 in \cite{avw22}]
  \label{holo}
  Fibered partially hyperbolic diffeomorphisms admit global $su$-holonomy maps.
\end{proposition}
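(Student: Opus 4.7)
The plan is to exploit the fibration structure directly. Let $\pi : M \to \hat M$ be the projection to the quotient space of the center foliation $W^c$, which by assumption is a topological manifold, and let $\hat f : \hat M \to \hat M$ be the induced Anosov base map with stable/unstable foliations $\hat W^s, \hat W^u$. The key observation is that any leaf $W^s(x)$ of the stable foliation in $M$ is mapped by $\pi$ homeomorphically onto the corresponding base stable leaf $\hat W^s(\pi(x))$.

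To see this, I would first note that the restriction $\pi|_{W^s(x)} : W^s(x) \to \hat M$ is a local diffeomorphism onto its image inside $\hat W^s(\pi(x))$: the bundle $E^s$ is complementary to $E^c$ inside $E^{cs}$, so $\pi$ restricted to $W^s(x)$ is an immersion, and the dimensions of $W^s(x)$ and $\hat W^s(\pi(x))$ coincide. The image is open and closed in $\hat W^s(\pi(x))$ (using invariance under backward iteration to fill in points), so $\pi|_{W^s(x)}$ is a covering onto $\hat W^s(\pi(x))$. Since the stable leaf of the Anosov diffeomorphism $\hat f$ is $C^1$-injectively immersed as $\mathbb R^k$ and hence simply connected, this covering must be a homeomorphism.

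Granted this, the global stable holonomy map $h^s_{xy} : W^c(x) \to W^c(y)$ is defined as follows. Given any $z \in W^c(x)$, one has $\pi(z) = \pi(x)$, so $W^s(z)$ projects homeomorphically onto the \emph{same} base stable leaf $\hat W^s(\pi(x))$ that contains $\pi(y)$; let $h^s_{xy}(z)$ denote the unique preimage of $\pi(y)$ in $W^s(z)$. By construction $h^s_{xy}(z) \in W^s(z) \cap W^c(y)$, and this agrees with the local holonomy from dynamical coherence (Proposition \ref{coh}) wherever the latter is defined. Continuity of $h^s_{xy}$ follows from the continuous dependence of $W^s$-leaves on base points and continuity of $\pi$, while swapping the roles of $x$ and $y$ furnishes a continuous inverse, so $h^s_{xy}$ is a homeomorphism. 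The unstable holonomy $h^u_{xy}$ is constructed identically, proving that $f$ admits global $su$-holonomy maps.

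The main obstacle is the global injectivity statement $\pi|_{W^s(x)} : W^s(x) \to \hat W^s(\pi(x))$. Local injectivity is immediate from transversality, but globally one must rule out that two distinct points of the stable leaf in $M$ lie in the same center fiber; this is where the simple connectedness of Anosov stable leaves of $\hat f$ is crucial, combined with the fact that $\pi$ restricted to a stable leaf is genuinely a covering onto all of $\hat W^s(\pi(x))$ (and not merely onto an open subset, which requires the surjectivity argument using invariance under the dynamics).
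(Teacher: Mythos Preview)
The paper does not give its own proof here; it simply cites Lemma~3.5 of \cite{avw22}. So the question is whether your argument stands on its own.

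There is a genuine gap. You repeatedly treat the base map $\hat f$ as a smooth Anosov diffeomorphism, invoking that its stable leaves are ``$C^1$-injectively immersed as $\mathbb R^k$'' and hence simply connected. But the definition of \emph{fibered} only guarantees that $\hat M$ is a \emph{topological} manifold and $\hat f$ a homeomorphism; there is no smooth structure on $\hat M$ a priori, and the phrases ``local diffeomorphism'', ``immersion'', and ``$C^1$-injectively immersed'' are not meaningful for $\pi$. More seriously, the simple connectedness of $\hat W^s(\pi(x))$ is exactly what you need, and the most natural way to establish it is to show that $\pi|_{W^s(x)}$ is a homeomorphism onto $\hat W^s(\pi(x))$ and then use that $W^s(x)\cong\mathbb R^k$ --- which is the very statement you are trying to prove. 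So the argument is circular as written. A secondary issue: a surjective local homeomorphism is not automatically a covering map (take $t\mapsto e^{it}$ restricted to $(0,4\pi)$), so even granting surjectivity the covering step needs properness or an evenly-covered argument, neither of which you supply.

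The standard route avoids the base entirely and argues injectivity of $\pi|_{W^s(x)}$ directly: if $z_1\neq z_2$ lie in $W^s(x)$ with $\pi(z_1)=\pi(z_2)$, then $f^n z_2\in W^c(f^n z_1)$ for all $n$ while $d(f^n z_1,f^n z_2)\to 0$; for $n$ large both points sit in a single trivialization chart $U\times F$ of the fiber bundle, where $W^s_{\text{loc}}(f^n z_1)$ is transverse to the fiber $\{*\}\times F$ and hence meets it only at $f^n z_1$, forcing $z_1=z_2$. Surjectivity onto any prescribed fiber $W^c(y)$ for $y\in W^s(x)$ follows by the same forward-iteration trick (for large $n$ the local holonomy $h^s_{f^nx,f^ny}$ is defined on all of $W^c(f^nx)$ inside a chart, then conjugate back by $f^{-n}$). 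This is the content of the cited lemma in \cite{avw22}, and it sidesteps all questions about regularity or topology of the quotient stable leaves.
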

\subsection{Bunching and holomophy of the stable and unstable holonomies}
Let $f$ be a partially hyperbolic diffeomorphism, we say that $f$ is \textit{center bunching} if there exists integer  $k \geq 1$ such that 
\begin{eqnarray*}
&&\sup_p \|D f^k|_{E^s(x)}\|\cdot \|(D f^k|_{E^c(x)})^{-1}\|\cdot \|D_p f^k|_{E^c(x)}\|<1, \hbox{ and}\\
&&\sup_p \|(D f^k|_{E^u(x)})^{-1}\|\cdot \|D f^k|_{E^c(x)}\|\cdot \|(D f^k|_{E^c(x)})^{-1}\|<1.
\end{eqnarray*}
\begin{proposition}[\cite{psw97}]
    \label{1b}
    Let $f: M \to M$ be a dynamical coherent partially hyperbolic $C^2$ diffeomorphism. If $f$ is center bunching, then the local stable and unstable holonomy maps are uniformly $C^1$ when restricted to each center unstable and each center stable leaf, respectively.
\end{proposition}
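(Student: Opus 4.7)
The plan is to run the graph-transform argument of Pugh--Shub--Wilkinson, which extracts a continuous invariant candidate for the derivative of $h^s$ from the cocycle $Df|_{E^c}$ and then upgrades $h^s$ from continuous to $C^1$. I will sketch the stable case on $W^{cs}$-leaves; the unstable case is symmetric. First I would reduce matters to showing that, for each $y\in W^s_{\text{loc}}(x)$, the restriction $h^s_{xy}:W^c_{\text{loc}}(x)\to W^c_{\text{loc}}(y)$ is $C^1$ with derivative depending continuously on $(x,y)$. Indeed, $h^s$ is already known to be H\"older continuous, and on short stable arcs it is essentially the identity in flow-box coordinates, so if the restriction to each center leaf is uniformly $C^1$, Journ\'{e}'s lemma (Proposition \ref{rjou}) assembles these pieces into a $C^1$ map on each local $W^{cs}$-leaf.

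Next I would produce the candidate derivative by iterating the invariance relation $h^s_{xy}=f^{-n}\circ h^s_{f^n x,f^n y}\circ f^n$. Formally differentiating and letting $n\to\infty$ suggests the definition
\[
T_{xy}\assign\lim_{n\to\infty}\bigl(Df^n|_{E^c(y)}\bigr)^{-1}\circ P_{f^n x,\,f^n y}\circ Df^n|_{E^c(x)},
\]
where $P_{a,b}$ is any fixed continuous identification of $E^c(a)$ with $E^c(b)$ for nearby points $a,b$ on the same stable leaf (for instance, parallel transport in a background connection). The key estimate is for the telescoping difference of consecutive terms, which is controlled by the H\"older dependence of $E^c$ along $W^s$, combined with the factor $\|Df^n|_{E^s}\|\cdot\|(Df^n|_{E^c})^{-1}\|\cdot\|Df^n|_{E^c}\|$. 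The center bunching hypothesis is precisely what forces this product to decay geometrically, so $T_{xy}$ exists as a uniform limit, depends continuously on $(x,y)$, and satisfies the cocycle identity $T_{yz}\circ T_{xy}=T_{xz}$ over stable paths by construction.

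To conclude I would show $T_{xy}=Dh^s_{xy}$. The classical absolute continuity of $h^s$ along center transversals (compatible in our setting with Proposition \ref{cac}) implies that $h^s_{xy}$ is differentiable Lebesgue-a.e.~on $W^c(x)$, and the invariance relation together with the definition of $T$ pins this a.e.~derivative down to $T_{xy}$. Proposition \ref{l26} then promotes $h^s_{xy}$ to a genuine $C^1$ map with continuous derivative $T_{xy}$, uniformly in $(x,y)$, which is the statement we need.

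The hard step is the telescoping convergence estimate: one must simultaneously track the stable contraction rate, the expansion and contraction of $Df|_{E^c}$, and the H\"older exponent of the partially hyperbolic splitting, and verify that center bunching forces the combined product to decay geometrically with constants uniform in the base point. Once that estimate is in hand, continuity, the chain rule and uniqueness of the candidate derivative follow from standard telescoping, and Journ\'{e}'s lemma delivers the final $C^1$ statement.
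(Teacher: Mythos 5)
This proposition is quoted from \cite{psw97}; the paper supplies no proof of its own, so your proposal must stand on its own, and it has a genuine gap at the final step. You construct the candidate derivative $T_{xy}$ correctly (this is the standard linear holonomy of the cocycle $Df|_{E^c}$, the analogue of Proposition \ref{cocycle}), but your mechanism for identifying $T_{xy}$ with $Dh^s_{xy}$ does not work in the stated generality. Absolute continuity of a holonomy map does not imply Lebesgue-a.e.\ differentiability once $\dim E^c\geq 2$; one needs Lipschitz or quasiconformal control for a Stepanov-type theorem, and Proposition \ref{l26} explicitly takes quasiconformality as a hypothesis. Proposition \ref{cac} provides quasiconformality only for the \emph{center} holonomy between \emph{unstable} leaves, in the special setting $\dim_{\mathbb C}E^u=1$; it says nothing about the \emph{stable} holonomy between \emph{center} transversals for a general $C^2$ center-bunched system, which is the generality of Proposition \ref{1b}. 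Moreover, absolute continuity of $W^s$ as a subfoliation of $W^{cs}$ (which is what you would need, not absolute continuity of $W^s$ in $M$) is itself a nontrivial fact. The standard proof avoids this entirely: either one applies the $C^1$ section theorem of Hirsch--Pugh--Shub to an appropriate bundle, whose spectral hypothesis is exactly the bunching inequality, or one proves differentiability \emph{directly and uniformly} by estimating $h^s_{xy}(\exp_x v)-\exp_y(T_{xy}v)=o(\|v\|)$ via the conjugation $h^s_{xy}=f^{-n}\circ h^s_{f^nx f^ny}\circ f^n$, using that $h^s_{f^nx f^ny}$ is $C^0$-exponentially close to the identity and that bunching controls the error transported back by $f^{-n}$. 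No a.e.\ argument or upgrade lemma is needed.

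A second, subtler issue is in your convergence estimate. The telescoping difference is controlled by $\|Df^n|_{E^s}\|^{\theta}\cdot\|(Df^n|_{E^c})^{-1}\|\cdot\|Df^n|_{E^c}\|$, where $\theta\leq 1$ is the H\"older exponent of $E^c$ along $W^s$ inside the $cs$-leaf, not by the exponent-$1$ product you wrote. Since $\|Df^n|_{E^s}\|<1$, raising it to a power $\theta<1$ makes the factor \emph{larger}, so the stated center bunching condition does not immediately force geometric decay; reconciling exponent-$1$ bunching with the actual H\"older exponent is precisely the delicate point addressed by Burns and Wilkinson in their refinement of \cite{psw97}. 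You correctly identify this as the hard step, but as written the estimate would not close.
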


Since $f$ is holomorphic, $Df|_{E^c}$ is conformal, and $f$ always satisfies the center bunching condition and the Proposition above applies. Moreover, we have the following stronger result : 

\begin{corollary}
    \label{sho}
    Let $f$ be a fibered holomorphic partially hyperbolic diffeomorphism. If $\dim_\mathbb C E^c=1$, then the local stable(unstable) holonomy maps are holomorphic when restricted to each center stable(unstable) leaf. In particular, $W^{s(u)}$ holomorphically subfoliates $W^{cs(cu)}$, all center leaves are holomorphic equivalent. 
\end{corollary}

\begin{proof}
  We prove the result for the stable holonomy, the proof for the
  unstable holonomy is similar by considering $f^{-1}$. By Proposition \ref{1b}, the local stable holonomy along center direction is uniformly $C^1$. For any $z \in W^c_{\mathrm{loc}}(x), w = h^s_{xy}(z)$, by differentiating the equation
  \[ h^s_{xy}(z) = f^{-n} \circ h^s_{f^n x f^n y} \circ f^n (z) \]
  we have
  \[ D h^s_{xy}(z) = (D f^n_w|_{E^c})^{-1} \circ D h^s_{f^n z f^n w} \circ
     D f^n_z|_{E^c}. \]
  Since $D f$ is conformal, and $D h^s_{f^n z f^n w}$ is close to $\mathrm{Id}$ as $n \to \infty$,  we see that $D h^s_{xy}(z)$ is conformal by letting $n \rightarrow \infty$. Therefore, $h^s_{xy}$ is holomorphic. By Lemma \ref{leaf} and Corollary \ref{hf}, $W^{s}$ holomorphically subfoliates $W^{cs}$. Recall that fibered partially hyperbolic systems have globally $su$-holonomy (see Proposition \ref{holo}); hence, all center leaves are holomorphic equivalent.
\end{proof}

\subsection{Disintegration of measures and Gibbs measures}

We first recall the disintegration of measures (see \cite{avw15,avw22} and \cite{bdv04} for more details). Let $M$ be a Polish space with a Borel probability measure $\mu$. Let $\mathcal{P}$ be a partition of $M$ into measurable subsets, and let $\hat{\mu}$ denote the induced measure on the $\sigma$-algebra generated by $\mathcal{P}$.

A \textit{disintegration} (or a \textit{system of conditional measures}) of $\mu$ with respect to the partition $\mathcal{P}$ is a family of probability measures $\{\mu_P\}_{P \in \mathcal{P}}$ on $M$ with the following properties:
\begin{itemize}
  \item $P$ is of full $\mu_P$-measure for $\mu$-almost every $P \in \mathcal{P}$;
  
  \item For any continuous function $\varphi: M \rightarrow \mathbb{R}$, the function $P \mapsto \int \varphi d \mu_P$ is measurable for the $\sigma$-algebra generated by $\mathcal{P}$, and
  \[ \int_M \varphi d \mu = \int_{\mathcal{P}} \left(\int_P \varphi d \mu_P\right) d \hat{\mu}(P) . \]
\end{itemize}

The disintegration may not exist for an arbitrary given $\mathcal{P}$, but it always exists if $\mathcal{P}$ is the union of local leaves of a foliation. In fact, Rokhlin~\cite{rok49} proved that there is a disintegration $\{\mu^{\mathcal{B}}_x : x \in \mathcal{B}\}$ of the restriction of $\mu$ to the foliation box into conditional measures along the local leaves. This disintegration is essentially unique in the sense that two such systems coincide in a set of full $\hat{\mu}$-measure. Moreover, if the leaves of the foliation are compact, then $\mathcal{P}$ can be chosen as the union of entire leaves:

\begin{proposition}[Proposition 3.7 in {\cite{avw22}}]
  Let $W$ be a foliation of $M$, and let $\mu$ be a Borel probability measure on $M$. If the leaf $W_x$ is compact for $\mu$-almost every $x\in M$, then there exists a system of conditional measures relative to $\{W_x: x\in M\}$ that is essentially unique. 
\end{proposition}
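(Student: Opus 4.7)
The plan is to apply Rokhlin's disintegration theorem to a countably generated sub-$\sigma$-algebra $\mathcal{A}\subset\mathcal{B}(M)$ whose atoms, modulo a $\mu$-null set, are precisely the leaves; the essential uniqueness clause of Rokhlin's theorem then delivers essential uniqueness of the conditional system $\{\mu_{W_x}\}$. What must be verified is that the partition by leaves is a \emph{measurable partition} in Rokhlin's sense, and it is the compactness of $\mu$-a.e.\ leaf that supplies this.

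First I would discard a $\mu$-null set to assume every $W_x$ is compact, and fix a countable cover of $M$ by foliation boxes $\{U_i\}$ with plaque projections $\pi_i\colon U_i\to T_i$ onto a standard Borel transversal. Since $M$ is Polish, I can also choose countably many bounded Borel functions $\{\psi_{i,j}\}_{j\in\mathbb{N}}$ on $T_i$ that separate points of $T_i$. Pulling these back via $\pi_i$ and extending by zero off $U_i$ yields a countable family $\phi_{i,j}\colon M\to\mathbb{R}$ of Borel functions, constant on plaques but not yet on leaves.

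To promote plaque-constancy to leaf-constancy I would average along leaves. Because $W_x$ is a compact immersed submanifold, it carries a finite induced Riemannian volume $\mathrm{vol}_{W_x}$ (from any Hermitian metric on $M$), and the assignment $x\mapsto W_x$, regarded as a map into the Polish space of compact subsets of $M$ with the Hausdorff metric, is Borel on the conull set where leaves are compact — this combines the local product structure of the foliation with leaf compactness, since each compact leaf is covered by finitely many of the $U_i$. Setting
\[ \Phi_{i,j}(x) := \frac{1}{\mathrm{vol}(W_x)}\int_{W_x}\phi_{i,j}\,d\mathrm{vol}_{W_x} \]
produces a countable family of Borel functions constant on leaves. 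A standard argument using point-separation on each $T_i$ by $\{\psi_{i,j}\}_j$ shows that $\{\Phi_{i,j}\}$ separates distinct compact leaves: if $W_x\neq W_y$, choose a box $U_i$ in which $W_x$ has a plaque disjoint from $W_y$ and a $\psi_{i,j}$ concentrated near the corresponding point of $T_i$.

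Letting $\mathcal{A}$ be the $\sigma$-algebra generated by all $\Phi_{i,j}$, one obtains a countably generated sub-$\sigma$-algebra whose atoms (mod $\mu$) are exactly the compact leaves, so the leaf partition is measurable in Rokhlin's sense. Applying Rokhlin's disintegration theorem to $(M,\mathcal{B}(M),\mu)$ relative to $\mathcal{A}$ yields a $\hat{\mu}$-a.e.\ unique system $\{\mu_{W_x}\}$ of conditional probabilities, each supported on the corresponding leaf and satisfying the integration formula. The main obstacle is the Borel measurability of the leaf-average map, and more fundamentally the fact that the leaf-saturated $\sigma$-algebra is countably generated modulo null sets; this is exactly where compactness enters decisively, because without it the leaf space need not be countably separated and a Rokhlin-type disintegration can genuinely fail.
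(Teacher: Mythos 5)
The paper does not actually prove this statement: it is quoted verbatim as Proposition 3.7 of \cite{avw22} and used as a black box, so there is no internal proof to compare against. Your argument is the standard route to that result --- exhibit the leaf partition as the atom partition of a countably generated $\sigma$-algebra of saturated Borel sets and invoke Rokhlin --- and it is essentially sound; you also correctly identify compactness as the reason the leaf space is countably separated (the irrational flow on the torus being the standard counterexample without it).

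Two points in your write-up need tightening. First, a family $\{\psi_{i,j}\}_j$ that merely \emph{separates points} of $T_i$ is not enough once you pass to the leafwise averages $\Phi_{i,j}$: averaging can collapse the distinction, and your own separation argument in fact uses something stronger, namely a nonnegative bump supported in a prescribed neighborhood $V\subset T_i$ of $t_0$ with $\psi(t_0)>0$. So you should take the $\psi_{i,j}$ to be, say, Urysohn-type bumps attached to a countable basis of $T_i$; the separation then works because two distinct compact leaves are disjoint closed sets at positive distance, so $W_y$ misses a full plaque-neighborhood $D\times V$ of the chosen plaque of $W_x$, giving $\Phi_{i,j}(y)=0<\Phi_{i,j}(x)$. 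Second, the Borel measurability of $x\mapsto \int_{W_x}\phi\,d\mathrm{vol}_{W_x}$ (and of $\mathrm{vol}(W_x)$) deserves an explicit argument rather than an appeal to the Hausdorff-metric map being Borel: the clean way is to define the iterated plaque-saturations $P_0(x)\subset P_1(x)\subset\cdots$ of the plaque of $x$, note each $P_n$ depends on $x$ in a Borel fashion, and observe that compactness forces $W_x=P_{n(x)}(x)$ for some finite $n(x)$, so the leafwise integral is a pointwise limit of Borel functions. (One does need some care here because leaf volume need not be locally bounded for compact foliations, so continuity is genuinely unavailable --- but Borel measurability survives.) With these repairs the atoms of $\sigma(\{\Phi_{i,j}\})$ are exactly the leaves on a conull saturated set, and Rokhlin's theorem gives existence and essential uniqueness as you claim.
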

We then introduce some facts about Gibbs measures (see \cite{bdv04} for more details). Let $f \in \mathrm{Diff} (M)$ be a partially hyperbolic diffeomorphism. An
invariant probability measure $\mu$ is a \textit{Gibbs $u (s)$-state} if its conditional measures along the local unstable(stable) leaves are equivalent to the leafwise Lebesgue measures. 
\begin{proposition}[Theorem 11.8, Lemma 11.13 and Corollary 11.14 in \cite{bdv04}]
  \label{gbu}Let $f$ be a $C^2$ partially hyperbolic diffeomorphism, then 
  \begin{itemize}
      \item there exists at least one Gibbs $u$-state;
      \item  the ergodic components of a Gibbs $u$-state are also Gibbs $u$-states;
      \item the support of any Gibbs $u$-state consists of entire strong unstable leaves.
  \end{itemize} 
\end{proposition}
We also have the following definition of Gibbs $cu$-state:

\begin{definition}\label{gbcu}
    Let $f$ be a dynamically coherent partially hyperbolic diffeomorphism. An invariant probability measure $\mu$ is a \textit{Gibbs $cu$-state} if its conditional measures along local center-unstable leaves are equivalent to leafwise Lebesgue measures.
\end{definition}

Gibbs $cs$-state can be defined in a similar way. Generally speaking, a Gibbs $cu (cs)$-state may not exist without additional assumptions on $f$. We will construct Gibbs $cu (cs)$ respectively by the action of the heat kernel (see Proposition \ref{s5}).

\subsection{The regularity of foliations and maps}\label{journee}
We first recall the definition of foliations. Let $M$ be a manifold of dimension $n \geq 2$. A \textit{foliation} is a partition $F$ of the manifold $M$ into continuous submanifolds ($F$-leaves) of dimension $k$ for some $0 < k < n$. To be precise, for every $p \in M$, there exists a continuous homeomorphism
\[ \varphi : \mathbb D^k \times \mathbb D ^{n - k} \rightarrow M \quad (\mathbb D^m \text{ denotes the unit disk in } \mathbb{R}^m) \]
with $\varphi(0, 0) = p$, such that the image of every horizontal $\mathbb D^k \times \{ \eta \}$ is contained in some $F$-leaf.

A foliation is \textit{holomorphic} if there is a holomorphic foliation atlas. It follows from the holomorphic Frobenius' theorem (see Theorem 2.26 in \cite{voi02}) that a foliation is holomorphic if and only if its leaves is tangent to a holomorphic distribution. To get a criterion for checking whether a foliation with complex  leaves is holomorphic, we need to establish a holomorphic analogue of Journ\'{e}'s lemma. We first recall the classic $C^\infty$ Journ\'{e}'s lemma:

\begin{proposition}[\cite{jou88}]
  \label{rjou}
  Let $W_1$ and $W_2$ be transverse foliations of a manifold $M$ with uniformly $C^{\infty}$ leaves. Suppose $\psi:M \to \mathbb R^n$ is a continuous function such that restrictions of $\psi$ to the leaves of $W_j$ are uniformly $C^{\infty}$ for $j=1,2$ respectively, then $\psi$ is $C^{\infty}$.
\end{proposition}
We then state the holomorphic Journ\'{e}'s lemma. It's worth noting that we do not need the ``uniform" assumption in the holomorphic Journ\'{e}'s Lemma.
\begin{proposition}[Holomorphic Journ\'{e}'s Lemma]
  \label{cjou}
  Let $W_1$ and $W_2$ be transverse foliations of a complex manifold $M$ with complex leaves. Suppose $\psi:M \to \mathbb C^n$ is a complex-valued continuous function such that restrictions of $\psi$ to the leaves of $W_j$ are holomorphic for $j=1,2$ respectively, then $\psi$ is a holomorphic map.
\end{proposition}
\begin{proof}
$\psi$ is uniformly continuous on any given foliation chart. By Cauchy's integral formula, a uniformly $C^0$ holomorphic map is uniformly $C^\infty$ thus $\psi$ is uniformly $C^\infty$ along the leaves of $W_j$. It follows from Proposition \ref{rjou} that $\psi$ is $C^\infty$. By assumptions, $\psi$ is holomorphic along $W_j$, $D \psi (i v)= i D\psi (v)$ for any $v\in T W_j$ $(j=1,2)$. Since $W_1$ and $W_2$ are transverse and $\psi$ is $C^\infty$, we have $D \psi (i v)= i D\psi (v)$ for any $v\in T W_1+TW_2=T M$ and therefore $\psi$ is holomorphic.
\end{proof}
This has the following corollary:
\begin{corollary}
  \label{hf}For a complex manifold $M$, let $W$ and $F$ are two transverse foliations of $M$ such that both $W$ and $F$ have complex leaves. If the local holonomy maps along $W$ between any two $F$-leaves are holomorphic, then $W$ is a holomorphic foliation. 
\end{corollary}

\begin{proof}
The proof is a simple adaptation for the $C^\infty$ case (see for example \cite{psw97} and \cite{bx18}). Let $n= \dim_\mathbb C M$ and $k= \dim_\mathbb C W$. We fix a point $x \in M$ together with a neighborhood $V$ of $x$ and choose a holomorphic coordinate chart $g: V \to \mathbb C^k \times \mathbb C^{n-k}$ such that $g(V \cap W(x)) \subset \mathbb C^{k} \times \{0\}$ and $g(V \cap F(x)) \subset \{0\} \times \mathbb C^{n-k}$. We then define for $p=(y,z) \in g(V)$,
$$\Psi(p)=(y,g(W)(p) \cap g(F)(0))=(y, h_{p,0}(z))$$
where $g(W)$, $g(F)$ denote the images of our foliations under $g$ and $h_{p,0}(z)$ is the unique intersection point of $g(W)(p)$ with $g(F)(0)$ inside $g(V)$. This map straightens the $W$-foliation into a foliation of $\mathbb C^k \times \mathbb C^{n-k}$ by $k$-disks $\mathbb D^k \times \{z\}$. Since the leaves of $W$ are complex submanifolds, the map $\Psi$ is holomorphic when restricted to the leaves of $W$, and since the holonomy maps of the $W$ foliation between $F$-transversals are holomorphic the chart $\Psi$ is also holomorphic along the leaves of $F$. By Proposition \ref{cjou} this implies that $\Psi$ is holomorphic.
\end{proof}
\subsection{\red{Non-stationary linearization and the holonomy of $Df|_{E^u}$}}

We introduce the non-stationary linearization of $W^u$ in this subsection, all results apply to $W^s$ by considering $f^{-1}$. \red{The following proposition is essentially established in \cite{ghy95}:}

\begin{proposition}
  \label{affine}
  Let $f$ be a holomorphic partially hyperbolic diffeomorphism with $\dim_{\mathbb{C}} E^{u} = 1$. Then for each $x \in M$, there is a holomorphic diffeomorphism $\Phi_x : E^u_x \rightarrow W^{u}(x)$ satisfying
 \begin{enumerate}
    \item $\Phi_{fx} \circ Df_x = f \circ \Phi_x$;
    \item $\Phi_x(0) = x$  and $D_0\Phi_x$ is the identity map;
    \item the family of diffeomorphisms $\{ \Phi_x \}_{x \in M}$ varies continuously with $x$.
\end{enumerate}
The family of diffeomorphisms $\{ \Phi_x \}_{x \in M}$ satisfying the conditions above is unique.
\end{proposition}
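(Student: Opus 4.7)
This is a non-stationary holomorphic Koenigs-type linearization for a one-dimensional expanding cocycle. Since $\dim_\mathbb{C} E^u=1$, Lemma~\ref{leaf} implies each $W^u(x)$ is a complex curve, and $f$ restricts to a holomorphic expanding biholomorphism $W^u(x)\to W^u(fx)$ with complex multiplier $Df_x|_{E^u}$ of modulus at least $\mu^{-1}>1$. The plan is to realise $\Phi_x$ as the uniform-on-compacts limit of approximations obtained by composing a fixed continuous family of initial holomorphic charts $\phi_y:B_\epsilon(0)\subset E^u_y\to W^u(y)$ (with $\phi_y(0)=y$, $D_0\phi_y=\mathrm{id}$, e.g.\ the Hermitian exponential along each $W^u$-leaf) with many iterates of $f$ and $Df$. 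Explicitly, set
$$\Phi_x^{(n)}(v) := f^n\circ\phi_{f^{-n}x}((Df^n_{f^{-n}x})^{-1}v),$$
which is well-defined on any prescribed compact subset of $E^u_x$ once $n$ is large, since $(Df^n_{f^{-n}x})^{-1}$ contracts norms at rate $O(\mu^n)$.

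\textbf{Convergence and basic properties.} The key input is that the chart-to-chart transition
$$F_y := \phi_y^{-1}\circ f\circ\phi_{f^{-1}y}\circ (Df_{f^{-1}y})^{-1}$$
satisfies $F_y(0)=0$ and $D_0F_y=\mathrm{id}$ by direct computation, hence $F_y(u)-u=O(|u|^2)$ uniformly in $y\in M$ by compactness of $M$ and Cauchy estimates applied to the fibrewise-holomorphic family $\phi_y$. Factoring the successive difference $\Phi_x^{(n+1)}-\Phi_x^{(n)}$ through the common chart $\phi_{f^{-n}x}$ and combining the contraction $|(Df^n_{f^{-n}x})^{-1}v|\lesssim\mu^n|v|$ with the Lipschitz blow-up $\lesssim\mu^{-n}$ of $f^n\circ\phi_{f^{-n}x}$ near $0$ yields
$$\|\Phi_x^{(n+1)}(v)-\Phi_x^{(n)}(v)\|\lesssim\mu^n|v|^2$$
uniformly on compacts. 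Thus $\Phi_x:=\lim_n\Phi_x^{(n)}$ exists, is holomorphic by Weierstrass, satisfies $\Phi_x(0)=x$ and $D_0\Phi_x=\mathrm{id}$ in the limit, and continuity in $x$ follows from the uniform estimate. A direct substitution also gives the finite-stage identity $f\circ\Phi_x^{(n)}=\Phi_{fx}^{(n+1)}\circ Df_x$, which produces the conjugacy $\Phi_{fx}\circ Df_x=f\circ\Phi_x$ upon passing to the limit.

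\textbf{Bijectivity and uniqueness.} The conjugacy $\Phi_x=f^{-n}\circ\Phi_{f^n x}\circ (Df_x)^n$ promotes the local biholomorphism property at $0$ to every point of $E^u_x$. Injectivity: if $\Phi_x(v_1)=\Phi_x(v_2)$, iterating the conjugacy backward forces $(Df^n_{f^{-n}x})^{-1}v_i$ to coincide for large $n$ inside the zone of local injectivity near $0\in E^u_{f^{-n}x}$, so $v_1=v_2$. Surjectivity: for $p\in W^u(x)$ we have $d(f^{-n}p,f^{-n}x)\to 0$ exponentially along $W^u(f^{-n}x)$, placing $f^{-n}p$ in the image of $\phi_{f^{-n}x}$ for large $n$, whence $v_n:=Df^n_{f^{-n}x}(\phi_{f^{-n}x}^{-1}(f^{-n}p))$ satisfies $\Phi_x^{(n)}(v_n)=p$ and a variant of the earlier Cauchy estimate shows $v_n\to v$ with $\Phi_x(v)=p$. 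For uniqueness, any other such family $\tilde\Phi_x$ gives $\Psi_x:=\tilde\Phi_x^{-1}\circ\Phi_x$ fixing $0$ with identity derivative there and obeying $\Psi_{fx}\circ Df_x=Df_x\circ\Psi_x$; expanding $\Psi_x(v)=v+\sum_{k\geq 2}a_k(x)v^k$ and matching coefficients yields $a_k(fx)=a_k(x)(Df_x)^{1-k}$, so iterating backwards forces $|a_k(f^{-n}x)|\geq\mu^{-n(k-1)}|a_k(x)|$, contradicting the continuity of $a_k$ on the compact $M$ unless $a_k\equiv 0$.

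\textbf{Main obstacle.} The delicate step is the uniform second-order control $F_y(u)-u=O(|u|^2)$ independent of $y$, since it is exactly this that balances the unstable expansion against the contraction of $(Df^n)^{-1}$ and produces the summable bound $\mu^{-n}\cdot\mu^{2n}=\mu^n$. The one-dimensionality of $E^u$ is essential here: in higher fibre dimension the linear cocycle $Df_x|_{E^u}$ could exhibit resonances obstructing a Koenigs-type linearization.
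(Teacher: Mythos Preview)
Your argument is correct and follows a genuinely different route from the paper. The paper does not construct $\Phi_x$ from scratch: it first invokes Sadovskaya's $C^\infty$ non-stationary linearization (Proposition~4.1 of \cite{sad05}), applicable since $\dim_\mathbb{C} E^u=1$ forces $f$ to be uniformly $u$-quasiconformal, and this immediately yields the unique $C^\infty$ family $\{\Phi_x\}$ satisfying (1)--(3). Holomorphicity is then a one-line upgrade: Proposition~\ref{l9} (quoted from \cite{bx18}) says the transition $\Phi_y^{-1}\circ\Phi_x$ for $y\in W^u(x)$ is affine with derivative $H^u_{xy}$, the unstable holonomy of the linear cocycle $Df|_{E^u}$; hence $D_v\Phi_x=H^u_{x,\Phi_x(v)}$, which is $\mathbb{C}$-linear because $Df$ is. Your self-contained Koenigs construction is more elementary and avoids both black boxes, at the cost of redoing work the paper outsources; the paper's approach, in turn, makes the structural role of the cocycle holonomy $H^u$ explicit, which is precisely what gets reused later (Lemma~\ref{l12}, Lemma~\ref{l28}).

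Two small fixes. The Hermitian (Riemannian) exponential on a complex curve is \emph{not} holomorphic in general, so it cannot serve as your initial chart $\phi_y$; instead take a graph parametrization of $W^u_{\text{loc}}(y)$ over $E^u_y$ in an ambient holomorphic chart of $M$, which is holomorphic along the leaf and varies continuously in $y$ since $E^u$ does. And your convergence bound tacitly relies on bounded distortion: the Lipschitz constant of $f^n\circ\phi_{f^{-n}x}$ on the relevant shrinking ball is comparable to the actual multiplier product $|\alpha_1\cdots\alpha_n|$ (with $\alpha_j=Df_{f^{-j}x}|_{E^u}$), not to the minimal rate $\mu^{-n}$, and it is the exact cancellation $|\alpha_1\cdots\alpha_n|\cdot|w_n|^2=|v|^2/|\alpha_1\cdots\alpha_n|\leq\mu^n|v|^2$ together with the standard one-dimensional distortion lemma that makes your telescoping sum converge without any pinching hypothesis.
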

The bundle $E^u$ is a Hölder continuous subbundle of $TM$ with some Hölder exponent $\beta > 0$. Therefore the restriction $Df|_{E^u}$ of the derivative of $f$ to the unstable bundle is a Hölder continuous linear cocycle over $f$ in the sense of Kalinin-Sadovskaya \cite{ks13}. For $x, y \in M$ two nearby points we let $I_{xy}: E^u_x \to E^u_y$ be a linear identification which is $\beta$-Hölder close to the identity. Since $f$ is holomorphic, $Df|_{E^u}$ is conformal (and is fiber bunched in the sense of \cite{ks13}). The following proposition thus applies to $Df|_{E^u}$.
\begin{proposition}[Proposition 4.2 in {\cite{ks13}}]
  \label{cocycle}For any $x \in M, y \in
  W^u(x)$, the limit\color{red}
  \[ H^u_{xy} \assign \lim_{n \rightarrow +\infty} Df^{n}_{f^{-n}y}|_{E^u}
     \circ I_{f^{-n} x f^{-n} y} \circ Df^{-n}_x|_{E^u} \]
\color{black}exists uniformly in $x$ and $y$ and defines a $\mathbb R$-linear map from $E^u_x$ to $E^u_y$ with the following properties:
  \begin{itemize}
    \item $H^u_{xx} = \mathrm{Id}$ and $H^u_{yz} \circ H^u_{xy} = H^u_{xz}$;
    \item $H^u_{xy} = Df^{n}_{f^{-n}y}|_{E^u} \circ H^u_{f^{-n} x f^{-n} y} \circ Df_x^{-n}$
    for any $n > 0$;
    \item $\| H^u_{xy} - I_{xy} \| \leqslant C d(x, y)^{\beta}$, where $\beta$ is the exponent of H\"{o}lder continuity for $E^u$.
  \end{itemize}
\end{proposition}
Similarly, if $y \in W^s_{\mathrm{loc}} (x)$, then the limit
\[  \lim_{n \rightarrow +\infty} (Df^{n}_y|_{E^u})^{-1}
     \circ I_{f^{n} x f^{n} y} \circ Df^{n}_x|_{E^u} \assign H^s_{x y} \]
exists and gives a linear map from $E^s_x$ to $E^s_y$ with analogous
properties. $H^u$ and $H^s$ are known as the \textit{unstable holonomy} and \textit{stable holonomy} for $D f|_{E^u}$ respectively. 
\begin{remark}
Since $D f|_{E^u}$ is $\mathbb C$-linear and $I_{x y}$ is close to a conformal linear map when $x$ and $y$ are close, $H^u$ and $H^s$ are actually $\mathbb C$-linear. 
\end{remark}
We end this section with a lemma we shall use later:
\begin{lemma}[Proposition 9 in {\cite{bx18}}]
  \label{l9}For each $x \in M$ and $y \in W^u(x)$, the map $\Phi^{-1}_y \circ \Phi_x :
  E^u_x \rightarrow E^u_y$ is an affine map with derivative $H^u_{xy}$.
\end{lemma}

\subsection{\red{Quasiconformality and center holonomy}\label{subsec qc}}
We first recall some classical results for quasiconformal maps on $\mathbb{C}$. Let $h \colon U \to V$ be a homeomorphism between two open subsets of $\mathbb{C}$. The linear dilatation of $h$ at a point $x \in U$ is defined as
\[
L_h(x) = \limsup_{r \to 0} \, \frac{\max_{\|y - x\| = r} \|h(y) - h(x)\|}{\min_{\|y - x\| = r} \|h(y) - h(x)\|}.
\]
For $K \geq 1$, the map $h$ is called \emph{$K$-quasiconformal} if $L_h(x) \leq K$ for all $x \in U$. When the constant $K$ is omitted, we simply refer to $h$ as a \emph{quasiconformal map}. The following lemma about quasiconformal map will be used throughout the text. 
\begin{lemma}[\cite{ahl06}]
  \label{qsl}
  Let $h:U \to V$ be a quasiconformal map between open domains of $\mathbb{C}$, then $h$ is absolutely continuous and differentiable Lebesgue almost everywhere. Moreover, if $\bar{\partial} \varphi = 0$ for Lebesgue-a.e. $z \in U$, then $\varphi$ is holomorphic.
\end{lemma}

Since $\dim_\mathbb C E^u= \dim_\mathbb C E^s=1$, $D f|_{E^u}$ and $D f|_{E^s}$ are conformal. In particular, $f$ is a uniformly quasiconformal partially hyperbolic diffeomorphism in the sense of \cite{bx18}, and we have: 

\begin{proposition}[Lemma 12 and Proposition 19 in {\cite{bx18}}]
  \label{cac}
  Let $f$ be a fibered holomorphic partially hyperbolic diffeomorphism of a compact complex 3-fold $M$, and let $\{\Phi_x\}$ be the unique family of holomorphic diffeomorphisms in Proposition \ref{affine}. Then for any $x \in M$, $y \in W^{cs}_{\mathrm{loc}}(x)$, 
  $$ \Phi_y^{-1} \circ h^{cs}_{\mathrm{loc}}\circ \Phi_x : \Phi_x^{-1} (W^u_{\mathrm{loc}}(x) )\to \Phi_y^{-1} (W^u_{\mathrm{loc}}(y) )$$
is quasiconformal. In particular, the center holonomy $h^c$ along $W^u$ is quasiconformal, absolutely continuous, and differentiable almost everywhere (with respect to the leafwise Lebesgue measure). 
\end{proposition}
For any $x \in M$, let $L_{\mathrm{loc}}^{cu}(x):=W^c(W^u_{\mathrm{loc}}(x))$ be the $c$-saturated set containing $W^u_{\mathrm{loc}}(x)$ and let $m^{cu}_x$ be the leafwise Lebesgue measure on $L_{\mathrm{loc}}^{cu}(x)$. Let $m^\ast$ be the leafwise Lebesgue measure along the $\ast$-leaves ($\ast= c, u$). As a consequence of the absolute continuity of the center holonomy, we have the following Fubini type theorem:
\begin{proposition}[Corollary 17 in \cite{bx18}]
  \label{fub}
 Let $f$ be a fibered holomorphic partially hyperbolic diffeomorphism of a compact complex 3-fold $M$, then
 \[ m_x^{c u} \asymp \int_{W^c (x)} m^u_y d m^c_x (y) \asymp \int_{W^u_{\mathrm{loc}} (x)} m^c_y d m^u_x (y). \]
where ``$\asymp$'' denoting that two measures are equivalent.
\end{proposition}
\section{A dichotomy: isometries or contractions}\label{sec dic}
\textbf{The following five sections are dedicated to the proof of Theorem \ref{t2}}. Throughout these five sections, $f$ is always a holomorphic fibered partially hyperbolic diffeomorphism on a compact complex 3-fold $M$, $h^c$ always denotes the center holonomy along the unstable direction, and $\mu$ is always an ergodic Gibbs $u$-state. Let $\hat{M} = M / W^c$ denote the leaf space of the center foliation. We will always use $\hat{\cdot}$ symbols for quotient objects. For example, $\hat{f}$ is the quotient map of $f$ on $\hat{M}$, $\hat{\mu}$ is the quotient measure on $\hat{M}$ of a measure $\mu$ on $M$, $W^c(x) = W^c(\hat{x})$ is the center leaf at $x$, etc.

A center-leaf-wise defined metric $g^c$ is called \textit{uniformly bounded} if $d_{g^c_{\hat x}}(x_1,x_2) \leq C d^c(x_1,x_2)$ is uniformly bounded for any $\hat x \in \hat M$ and $x_1,x_2 \in W^c(\hat x)$, where $d^c(\cdot, \cdot)$ is the distance induced by the restriction of a Riemannian metric on $M$ to the center leaf. Clearly, this definition is independent with the choice of the Riemannian metric. 

\begin{definition}
  If there is a fiberwise measurably defined Riemannian metric $g^c$ on $M$ such that
  \begin{itemize}
      \item $g^c$ is uniformly bounded;
      \item $f|_{W^c (\hat{x})}$ is an isometry for $\hat{\mu}$-almost every $\hat{x} \in \hat{M}$,
  \end{itemize}
  then $f$ is called a \textit{$\mu$-center isometry}.
\end{definition}
For any $x \in M$, we denote by $L_{\mathrm{loc}}^{cu}(x) \assign \{ y \in M | \hat{y} \in \hat W^u_{\mathrm{loc}}(x) \}$, the $W^c$-saturated set containing $W^u_{\mathrm{loc}}(x)$. 
\begin{definition}
    If there exists a subset $A \subset \mathrm{supp}\mu$ such that
  \begin{itemize}
    \item $ \liminf_{n \rightarrow + \infty} d^c (f^n x, f^n x') = 0 $ for any $x \in A$, $x' \in A \cap W^c (x)$;
    \item $A \cap L_{\mathrm{loc}}^{cu}(x)$ has full (leafwise Riemannian) volume in $L_{\mathrm{loc}}^{cu}(x)$ for $\mu$-a.e. $x$. 
    \end{itemize}
    then $f$ is called a \textit{$\mu$-center contraction}.
\end{definition}
The main result of this section is:

\begin{proposition}
  \label{dic}For any ergodic Gibbs $u$-state $\mu$, there is a dichotomy:
  \begin{itemize}
    \item either $f$ is a $\mu$-center isometry;
    \item or $f$ is a $\mu$-center contraction.
  \end{itemize}
\end{proposition}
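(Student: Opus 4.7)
Since $\dim_{\mathbb{C}} E^c = 1$ and $f$ is holomorphic, the cocycle $Df|_{E^c}$ is $\mathbb{C}$-linear, hence real-conformal; in particular the two extremal center Lyapunov exponents coincide, and by ergodicity of $\mu$ there is a $\mu$-a.e.\ constant $\lambda \in \mathbb{R}$ with $\lim_{n} \frac{1}{n} \log \|Df^n|_{E^c_x}\| = \lambda$. The plan is to split according to the sign of $\lambda$: $\lambda = 0$ should give the isometric case, and $\lambda \neq 0$ the contraction case.

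\textbf{Case $\lambda = 0$.} The Avila--Viana invariance principle (Proposition \ref{ip}) applies: since $\lambda^+ = \lambda = 0$, the conditional measures of $\mu$ along $W^c$ are $u$-invariant, and by the dual statement (which applies because $Df|_{E^c}$ is conformal, so $\lambda^- = \lambda = 0$ as well) they are also $s$-invariant. This rigidity, combined with conformality of $Df|_{E^c}$, allows the scalar logarithmic cocycle $\log \|Df|_{E^c_x}\|_{g_0}$ (relative to a continuous reference metric $g_0$ on $E^c$) to be realized as a measurable coboundary $\rho \circ f - \rho$, either via a direct cohomological argument in the one-dimensional Hadamard space $\mathbb{R}$ of log-scales, or by pushing Lebesgue densities along unstable holonomies to build a consistent leafwise density. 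Setting $g^c = e^{\rho} g_0|_{E^c}$ then produces a measurable $f$-invariant fiberwise Hermitian metric on the center leaves, and uniform boundedness of the induced leafwise distance follows from compactness of $M$ together with continuity of $g_0$ and of $Df|_{E^c}$. Hence $f|_{W^c(\hat x)}$ is an isometry $\hat\mu$-a.e., giving the $\mu$-center isometry conclusion.

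\textbf{Case $\lambda \neq 0$.} When $\lambda < 0$, Pesin theory applied to the one-complex-dimensional conformal cocycle $Df|_{E^c}$ yields, for $\mu$-a.e.\ $x$, a positive-radius neighborhood $U(x) \subset W^c(\hat x)$ on which $d^c(f^n x, f^n y) \to 0$ as $n \to +\infty$, producing the required $\liminf = 0$ condition on a $\mu$-full set $A$. When $\lambda > 0$, the analogous backward-time contraction applies to $f^{-1}$, and a forward-time $\liminf = 0$ statement is extracted by combining Poincar\'e recurrence on the compact center fibers (all biholomorphic compact Riemann surfaces by Corollary \ref{sho}) with the observation that exponential infinitesimal expansion on bounded Riemann surfaces must fold back under the forward dynamics. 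In either sub-case, the final step is to upgrade ``$A$ is $\mu$-full'' to ``$A \cap L^{cu}(x)$ has full leafwise Lebesgue measure in $L^{cu}(x)$ for $\mu$-a.e.\ $x$'', which invokes the Gibbs $u$-state property (Lebesgue conditionals along $W^u$, Proposition \ref{gb}) together with the Fubini equivalence $m^{cu} \asymp \int m^u \, dm^c$ (Proposition \ref{fub}).

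\textbf{Expected main obstacle.} The most delicate step is the $\lambda > 0$ sub-case: Gibbs $u$-states are not symmetric under $f \leftrightarrow f^{-1}$, so a direct time reversal of the Pesin argument is unavailable, and one needs quantitative use of the compactness and holomorphy of the center fibers to force forward-time $\liminf = 0$. A secondary technical point, common to both non-zero sub-cases, is the upgrade from $\mu$-full to leafwise Lebesgue-full measure on $L^{cu}(x)$, which rests on a careful interplay between absolute continuity of the $u$-foliation and the Fubini-type equivalence supplied by Proposition \ref{fub}.
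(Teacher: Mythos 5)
Your decomposition by the sign of the center Lyapunov exponent $\lambda$ is not the dichotomy the paper uses, and it does not actually produce the two alternatives as defined. The paper first splits by the genus of the (mutually biholomorphic) center fibers: for genus $\geq 1$ the isometry case is automatic because holomorphic automorphisms preserve the constant-curvature metric. The real work is the genus-zero case, where the fiber dynamics is a cocycle in $\mathrm{PSL}(2,\mathbb{C})$, and the relevant dichotomy is \emph{boundedness versus unboundedness of the matrix cocycle} $\|Df^n|_{W^c(\hat x)}\|$, not the sign of $\lambda$. These do not coincide: a parabolic cocycle (say, fiberwise $z\mapsto z+1$ iterated) has polynomial matrix growth, hence $\lambda=0$, yet admits no uniformly bounded invariant fiberwise metric --- it is a $\mu$-center contraction (every pair of points off the fixed point at infinity collapses under forward iteration), while your scheme would place it in the isometric case. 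Relatedly, your $\lambda=0$ argument has a gap even where it could apply: vanishing of the exponent does not make $\log\|Df|_{E^c}\|$ a coboundary with a controlled transfer function, and the invariance principle (Proposition \ref{ip}) constrains the disintegration of $\mu$, not the derivative cocycle; the required \emph{uniformly bounded} invariant metric is obtained in the paper by a circumcenter construction in the nonpositively curved space of conformal structures, applied precisely on the set $\hat B$ where the cocycle is bounded.

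In the contraction case your Pesin-theoretic argument is also too weak for the definition being proved. A negative exponent gives local stable manifolds, i.e., contraction of pairs in a small Pesin neighborhood, whereas a $\mu$-center contraction requires $\liminf_n d^c(f^nx,f^nx')=0$ for \emph{all} pairs in a set $A$ meeting each center fiber in a full Lebesgue measure subset --- a global statement on the fiber. The paper gets this from Lemma \ref{liag}: an unbounded sequence in $\mathrm{PSL}(2,\mathbb{C})$ collapses all of $\mathbb{P}^1$ minus one point, and $A(\hat x)$ is taken to be the intersection of the iterated complements of these exceptional points. This also dissolves the obstacle you flag in the $\lambda>0$ sub-case: no time reversal or recurrence argument is needed, since unboundedness of the matrices already forces forward collapse of generic pairs. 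The final upgrade to full leafwise Lebesgue measure on $L^{cu}(x)$ via the Gibbs $u$-state property and Proposition \ref{fub} is the one step you have essentially right.
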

Recall that all center leaves are holomorphic equivalent compact complex curves (see Proposition \ref{sho}). The ambient manifold $M$ is a continuous fiber bundle over $\hat M$ whose fibers are given by the center leaves. Denote \textit{$p_g$} as the genus of the center leaves; we first prove that

\begin{lemma}
  If $p_g > 0$, then $f$ is a $\mu$-center isometry.
\end{lemma}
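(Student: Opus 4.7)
The plan is to equip each center leaf with its canonical conformal metric coming from uniformization and to show that $f$ automatically preserves it. By Corollary~\ref{sho} each center leaf is a compact Riemann surface of genus $p_g>0$, and all of them are biholomorphic to one another. If $p_g\geq 2$, I set $g^c_{\hat x}$ to be the hyperbolic metric of constant curvature $-1$ on $W^c(\hat x)$ furnished by the uniformization theorem. If $p_g=1$, I write $W^c(\hat x)\cong \mathbb C/\Lambda_{\hat x}$ and take $g^c_{\hat x}$ to be the flat metric inherited from $|dz|^2$ on $\mathbb C$, rescaled so that $(W^c(\hat x),g^c_{\hat x})$ has unit area.

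Next I verify that each restriction $f|_{W^c(\hat x)}:W^c(\hat x)\to W^c(\hat{fx})$ is an isometry. Since $f$ is holomorphic and sends center leaf to center leaf, this restriction is a biholomorphism between compact Riemann surfaces of positive genus. For $p_g\geq 2$, uniqueness of the hyperbolic metric in a given conformal class (a standard consequence of the Schwarz lemma on the upper half plane) forces every such biholomorphism to be an isometry. For $p_g=1$, any biholomorphism $\mathbb C/\Lambda_{\hat x}\to \mathbb C/\Lambda_{\hat{fx}}$ lifts to an affine map $z\mapsto \alpha z+\beta$ with $\alpha\Lambda_{\hat x}=\Lambda_{\hat{fx}}$, and its (real) Jacobian is the constant $|\alpha|^2$. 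Since the source and target both have unit area, $|\alpha|=1$ and $f|_{W^c(\hat x)}$ is again an isometry.

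It remains to check uniform boundedness. Because the center foliation is a topological fibration whose leaves are complex submanifolds of the smooth complex manifold $M$, the complex structures on the leaves depend continuously on the base point, and hence so does the canonical metric $g^c_{\hat x}$. Compactness of $M$ then yields a uniform bound on the ratio between $g^c$ and the restriction of a fixed Hermitian metric on $M$, which is the required boundedness condition (and gives measurability for free). The only place requiring a little care is the continuous dependence of the uniformization in the $p_g=1$ case, where one must coherently choose lattice representatives $\Lambda_{\hat x}$; this is a purely local statement handled by trivializing the fibration over small charts of $\hat M$. I note that $\mu$ plays no role in this argument: once $p_g>0$, $f$ is a center isometry in the strong sense that the isometry property holds on \emph{every} center leaf.
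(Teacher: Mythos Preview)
Your proof is correct and follows essentially the same approach as the paper: equip each center leaf with its constant-curvature metric from uniformization (normalized to unit volume), then invoke the classical fact that biholomorphisms between compact Riemann surfaces of positive genus are isometries for this metric. The paper's argument is terser---it states the result in two sentences and does not split into the $p_g=1$ and $p_g\geq 2$ cases---whereas you supply more detail, including an explicit verification of the uniform boundedness condition via continuous dependence of the canonical metric, which the paper leaves implicit.
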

\begin{proof}
  The lemma is a simple corollary of a classical result in complex analysis: If a compact Riemann surface has genus $p_g \geqslant 1$, then the holomorphic automorphism group preserves the metric of constant curvature. 
    To be precise, let $g_{\hat{x}}$ be the fiberwise defined metric with constant curvature and such that $\mathrm{vol}(W^c(\hat{x}))=1$ for each $\hat{x} \in \hat{M}$. $f|_{W^c (\hat{x})}$ is a holomorphic diffeomorphism and therefore is an isometry with respect to $g_{\hat{x}}$ and $g_{\hat{f} (\hat{x})}$, then $f$ is a $\mu$-center isometry by definition.
\end{proof}

The remaining case is that $p_g=0$, i.e., $M$ is a continuous $\mathbb{P}^1$ bundle over the leaf space $\hat{M}$. We fix a metric $g$ on $M$ and define:
\[ \hat{B} \assign \{ \hat{x} \in \hat{M} | \|Df^n|_{W^c(\hat x)}\|
   \text{ is uniformly bounded for } n > 0\}. \]
Roughly speaking, $\hat{B}$ consists of center leaves that exhibit isometric features. Note that since $M$ is compact, the definition of $\hat{B}$ does not rely on the choice of metric. Moreover, if we identify $f^n|_{W^c(\hat x)}$ with a matrix in the holomorphic automorphism group $\mathrm{Aut}(\mathbb P^1) \simeq \mathrm{PSL}(2,\mathbb{C})$, then $\|f^n|_{W^c(\hat x)}\|$ is uniformly bounded with respect to the matrix norm for any $\hat x \in \hat B$\footnote{Since $\hat M$ is compact, there exist finitely many local trivialization charts $(\hat U_i, \phi_i)$ of the fiber bundle $\pi :M \to \hat M$ covering $\hat M$. In particular, $\|Df^n|_{W^c(\hat x)}\|$ is uniformly bounded if and only if $\phi_{i_{\hat f^n(\hat x)}}^{-1}\circ f^n|_{W^c(\hat x)} \circ \phi_{i_{\hat x}} \in \mathrm{PSL}(2,\mathbb{C})$ is uniformly bounded, where $i_{\hat x}$ and $i_{\hat f^n(\hat x)}$ are subscripts such that $\hat x \in \hat U_{i_{\hat x}}$ and $ \hat f^n (\hat x) \in \hat U_{i_{\hat f^n(\hat x)}}$ respectively.}. We mention the following lemma from linear algebra which will be used later, whose proof can be find in Appendix \ref{sec liag}. 
\begin{lemma}\label{liag}
    Let $\{A_n\} \subset \mathrm{Aut}(\mathbb P^1) \simeq \mathrm{PSL}(2,\mathbb C)$ be a sequence of matrices such that $\|A_n\| \to \infty$ as $n$ tends to infinity. Then there is a sequence $\{n_k\} \subset \mathbb P^1$ and $b\in \mathbb P^1$ such that 
    $$\lim_{k \to \infty}d(A_{n_k} x_1,A_{n_k} x_2)=0$$
for any $x_1, x_2 \in \mathbb P^1 \backslash \{b\}$, where $d(\cdot, \cdot)$ is the distance function induced any Riemannian metric on $\mathbb P^1$. 
\end{lemma}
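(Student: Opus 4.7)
The plan is to invoke the Cartan (or KAK) decomposition of $\mathrm{SL}(2,\mathbb{C})$ together with the compactness of its maximal compact subgroup $\mathrm{SU}(2)$, which acts by Fubini–Study isometries on $\mathbb{P}^1$. The overall idea is to factor out the isometric part of each $A_n$ so that only a diagonal piece with diverging dilation remains; such a diagonal piece collapses almost all of $\mathbb{P}^1$ to a single point, and then the isometric pre/post compositions relocate where the exceptional point sits and where the image accumulates.

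First, I would lift each $A_n$ to a representative in $\mathrm{SL}(2,\mathbb{C})$ (still denoted $A_n$) and write its singular-value decomposition
\[
A_n = K_n\,D_n\,K'_n, \qquad K_n,K'_n\in \mathrm{SU}(2),\qquad D_n=\mathrm{diag}(\lambda_n,\lambda_n^{-1}),\ \lambda_n\geq 1.
\]
Because $\mathrm{SU}(2)$-factors are unitary, the operator norm satisfies $\|A_n\|=\lambda_n$; hence the hypothesis $\|A_n\|\to\infty$ (which, via any finite trivializing atlas of the bundle and the equivalence of matrix norms on $\mathrm{PSL}(2,\mathbb{C})$-representatives, is well-posed) forces $\lambda_n\to\infty$. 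By compactness of $\mathrm{SU}(2)$ I extract a subsequence $n_k$ along which $K_{n_k}\to K$ and $K'_{n_k}\to K'$ uniformly in $\mathrm{SU}(2)$.

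Second, I analyze the diagonal factor on $\mathbb{P}^1$. In the affine coordinate $z=z_1/z_2$, the map $D_{n_k}$ is $z\mapsto \lambda_{n_k}^2 z$, which fixes $0$ and $\infty$ and, since $\lambda_{n_k}^2\to\infty$, sends any $z\neq 0$ to $\infty$ in $\mathbb{P}^1$. In other words, $D_{n_k}$ converges pointwise on $\mathbb{P}^1\setminus\{0\}$ to the constant map with value $\infty\in\mathbb{P}^1$, and uniformly on compact subsets of $\mathbb{P}^1\setminus\{0\}$ in the Fubini–Study metric.

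Finally, I put the three pieces together. Define
\[
b \assign (K')^{-1}(0)\in\mathbb{P}^1.
\]
For any $x\in\mathbb{P}^1\setminus\{b\}$ we have $K'(x)\neq 0$, so by uniform convergence $K'_{n_k}(x)\to K'(x)$ and in particular $K'_{n_k}(x)$ eventually lies in a small disk around $K'(x)$ disjoint from $0$. Applying $D_{n_k}$ to such a disk pushes it to a neighborhood of $\infty$, so $D_{n_k}(K'_{n_k}(x))\to \infty$; then continuity of the action together with $K_{n_k}\to K$ yields
\[
A_{n_k}(x)=K_{n_k}\bigl(D_{n_k}(K'_{n_k}(x))\bigr)\longrightarrow K(\infty).
\]
Hence for any $x_1,x_2\in\mathbb{P}^1\setminus\{b\}$ the points $A_{n_k}x_1$ and $A_{n_k}x_2$ share the common limit $K(\infty)$, which gives $d(A_{n_k}x_1,A_{n_k}x_2)\to 0$, as desired.

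There is no real obstacle here beyond bookkeeping: the only mildly delicate point is keeping the projective equivalence class straight, i.e.\ checking that blow-up of the matrix norm on representatives is independent of the choice of lift to $\mathrm{SL}(2,\mathbb{C})$, which follows since the two lifts differ only by a sign. Everything else is a direct consequence of the Cartan decomposition and the elementary dynamics of $z\mapsto \lambda^2 z$ on $\mathbb{P}^1$.
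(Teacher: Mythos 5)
Your proof is correct and follows essentially the same route as the paper's: a KAK/Cartan (singular value) decomposition, extraction of convergent subsequences of the compact factors, and identification of the exceptional point $b$ as the preimage under the limiting right unitary factor of the direction contracted by the diagonal part. The only cosmetic difference is that the paper phrases the collapse via the rank-one limit of $A_{n_k}/\|A_{n_k}\|$ (kernel and image directions), whereas you argue directly with the dynamics of $z\mapsto\lambda_{n_k}^2 z$ on $\mathbb{P}^1$; the substance is identical.
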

Since $\mu$ is ergodic, $\hat{B}$ is either a $\hat{\mu}$-null set or of full $\hat{\mu}$ measure. Proposition \ref{dic} follows immediately from the following two lemmas: 
\begin{lemma}
If $\hat \mu (\hat B)=1$, then $f$ is a $\mu$-center isometry.
\end{lemma}

\begin{proof}
The proof is inspired by the proof of Proposition 2.4 in \cite{ks10}. Note that $\pi : M \to \hat M$ is a continuous fiber bundle whose structure group is $\mathrm{PSL}(2,\mathbb C)$. We consider an associated fiber bundle $p:\mathcal{C} \to \hat M$ whose fibers are $\mathrm{PSL} (2, \mathbb{C}) / \mathrm{PSU}(2)$, sharing the same transition function with $\pi: M \to \hat M$, and the structure group $\mathrm{PSL}(2,\mathbb C)$ acts on $\mathcal{C}$ via
\begin{equation}\label{eqconf}
    X [C] = [ X^{\mathsf{H}} C X ] \quad \text{where } X,C \in \mathrm{PSL} (2, \mathbb{C}).
\end{equation}

It is known that $\mathrm{PSL} (2, \mathbb{C}) / \mathrm{PSU}(2)$ is a Riemannian symmetric space of non-positive curvature when equipped with a certain $\mathrm{PSL} (2, \mathbb{C})$-invariant metric $\rho$ (see \cite{hel78} for more details)\color{black}. The bundle map $f|_{W^c(\hat x)}: W^c(\hat x) \to W^c(\hat f \hat x)$ induces a pull-back action $(f|_{W^c(\hat{x})})^{\ast}:\mathcal C(\hat f \hat x) \to \mathcal C(\hat x)$ for any $\hat x \in \hat M$ via the action given by equation \eqref{eqconf}, which is an isometry.

Since a continuous fiber bundle is always measurably trivial via a bounded measurable identification, we can take $\tau_0$ to be a bounded measurable section of $\mathcal{C}$. For every $\hat{x} \in \hat{B}$, it follows from the definition of $\hat{B}$ that the set
\[ S(\hat{x}) = \{ (f^n|_{W^c(\hat{x})})^{\ast} (\tau_0(\hat{f}^n \hat{x})) : n > 0 \} \]
is bounded in $\mathcal{C} (\hat{x})$. Let $\omega(\hat{x})$ be the set of limit points of $S(\hat{x})$, which is non-empty, bounded and $f^\ast$-invariant. Since the space $\mathcal{C} (\hat{x})$ has non-positive curvature, for every $\hat{x} \in \hat{B}$ there exists a uniquely determined ball of the smallest radius containing $\omega(\hat{x})$. We denote its center by $\tau(\hat{x})$. It follows from the construction that the invariant section $\tau$ is a bounded measurable section that is $(f|_{W^c(\hat{x})})^*$-invariant.

Let $C: \hat{B} \to \mathrm{PSL}(2,\mathbb{C})$ be any bounded measurable map such that $C(\hat{x})$ represents $\tau(\hat{x})$ at every $\hat{x} \in \hat{B}$ (The non-uniqueness of $C$ does not affect our arguments). Let $g_{\hat{x}}$ be the fiberwise defined metric with constant curvature and unit volume. Then $C(\hat{f}(\hat{x}))^{-1} \circ f(\hat{x}) \circ C(\hat{x}) \in \mathrm{PSU}(2)$ is an isometry with respect to $g_{\hat{x}}$ and $g_{\hat{f}(\hat{x})}$ for every $\hat{x} \in \hat{B}$. Therefore, the pull-back metric $C^{\ast} g_{\hat{x}}$ is a fiberwise defined metric on $W^c(\hat{B})$ such that $f|_{W^c(\hat{x})}$ is an isometry for every $\hat{x} \in \hat{B}$. In particular, $f$ is a $\mu$-center isometry if $\hat \mu (\hat B)=1$.
\end{proof}

\begin{lemma}
  If $\hat{\mu} (\hat B) = 0$, then $f$ is a $\mu$-center contraction.
\end{lemma}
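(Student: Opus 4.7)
The plan is to apply Lemma \ref{liag} fiberwise to the center leaves outside $\hat B$ and then use the Fubini-type relation of Proposition \ref{fub}, together with the Gibbs $u$-state property of $\mu$, to assemble a sufficiently large contracting set $A$. Since $\hat\mu(\hat B)=0$, for $\hat\mu$-a.e.\ $\hat x$ the sequence $\|Df^n|_{W^c(\hat x)}\|$ is unbounded. Via local trivializations of the $\mathbb P^1$-bundle $M\to\hat M$ I identify $f|_{W^c(\hat x)}$ with elements $A_n(\hat x)\in\mathrm{PSL}(2,\mathbb C)$ and extract a subsequence along which $\|A_{n_j}(\hat x)\|\to\infty$; Lemma \ref{liag} then produces a further subsequence $\{n_k\}=\{n_k(\hat x)\}$ and a ``repelling point'' $b(\hat x)\in W^c(\hat x)$ such that $d^c(f^{n_k}x_1,f^{n_k}x_2)\to 0$ for every pair $x_1,x_2\in W^c(\hat x)\setminus\{b(\hat x)\}$.

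Next, a standard measurable selection theorem (Kuratowski--Ryll-Nardzewski) applied to the closed-valued, non-empty multifunction $\hat x\mapsto\{\text{repelling points of }\hat x\}$ yields a measurable $b\colon\hat M\setminus\hat B\to M$; the resulting set $\mathcal B=\{b(\hat x)\}$ is Borel and meets every center fiber in at most one point. I then define
\[
A\;:=\;\mathrm{supp}(\mu)\;\setminus\;\bigl[\,W^c(\hat B)\,\cup\,\mathcal B\,\bigr].
\]
Condition (i) of the $\mu$-center contraction is immediate: for $x,x'\in A$ on the same center leaf $W^c(\hat x)$, we have $\hat x\notin\hat B$ and neither point equals $b(\hat x)$, so Lemma \ref{liag} gives $d^c(f^{n_k}x,f^{n_k}x')\to 0$ along the subsequence associated to $\hat x$, hence $\liminf_n d^c(f^n x,f^n x')=0$.

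For condition (ii) I show that each of the three pieces of $L^{cu}(x)\setminus A$ is $m^{cu}_x$-null for $\mu$-a.e.\ $x$. The set $\mathcal B\cap L^{cu}(x)$ is null by Proposition \ref{fub} because $\mathcal B$ intersects each center fiber in at most one point. For $W^c(\hat B)\cap L^{cu}(x)$ the Gibbs $u$-state hypothesis implies that the conditionals of $\hat\mu$ on $\hat W^u_{\mathrm{loc}}$ are equivalent to leafwise Lebesgue, so $\hat\mu(\hat B)=0$ forces $\hat B\cap\hat W^u_{\mathrm{loc}}(\hat x)$ to have zero Lebesgue measure for $\hat\mu$-a.e.\ $\hat x$; Proposition \ref{fub} then gives the desired nullity of $W^c(\hat B)\cap L^{cu}(x)$. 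The remaining piece $L^{cu}(x)\setminus\mathrm{supp}(\mu)$ is handled using that ergodic Gibbs $u$-states have $u$-saturated support, combined with absolute continuity of $W^c$ inside $W^{cu}$ (Proposition \ref{cac}), to conclude $L^{cu}(x)\subset\mathrm{supp}(\mu)$ up to an $m^{cu}_x$-null set for $\mu$-a.e.\ $x$.

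The main technical obstacle I anticipate is the measurable selection of $b(\hat x)$, since the extracted subsequence $n_k$ and its limiting repeller depend on $\hat x$ in a potentially non-canonical way. A concrete remedy is to use a measurable polar decomposition $A_n=U_nD_nV_n$ with $D_n=\mathrm{diag}(d_n,d_n^{-1})$ and take $b(\hat x)$ to be any accumulation point of $\{V_n(\hat x)^{-1}[0\!:\!1]\}_n\subset\mathbb P^1\cong W^c(\hat x)$; this accumulation set is closed and non-empty precisely when $\|A_n(\hat x)\|\to\infty$, so Kuratowski--Ryll-Nardzewski applies and the remainder of the argument proceeds as above.
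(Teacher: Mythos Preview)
Your overall approach mirrors the paper's: apply Lemma \ref{liag} fiberwise outside $\hat B$ to produce the repelling point $b(\hat x)$, remove it from each center leaf to form the contracting set, and then use the Gibbs $u$-state property of $\mu$ together with Proposition \ref{fub} to verify the full-measure condition in $L^{cu}(x)$. The paper additionally passes to the $f$-saturation $A(\hat x)=\bigcap_{n\in\mathbb Z} f^n A_0(\hat f^{-n}\hat x)$, removing countably many rather than one point per fiber, but this extra step plays no role in either condition. Your attention to the measurable selection of $b(\hat x)$ is more careful than the paper, which simply invokes Lemma \ref{liag} pointwise and never addresses measurability.

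There is, however, a genuine gap in your handling of $L^{cu}(x)\setminus\mathrm{supp}\,\mu$. The claimed deduction from $u$-saturation of $\mathrm{supp}\,\mu$ and absolute continuity of center holonomy does not go through, and the conclusion is in fact false. Take $M=\hat M\times\mathbb P^1$ with $\hat M$ a complex $2$-torus carrying a linear Anosov $\hat f$ of unstable eigenvalue larger than $2$, set $f(\hat x,z)=(\hat f\hat x,2z)$, and let $\mu=\mathrm{Leb}_{\hat M}\times\delta_0$. This $\mu$ is an ergodic Gibbs $u$-state with $\hat B=\emptyset$, yet $\mathrm{supp}\,\mu=\hat M\times\{0\}$ meets every $L^{cu}(x)$ in an $m^{cu}_x$-null set; no subset $A\subset\mathrm{supp}\,\mu$ can satisfy condition (ii). Neither $u$-saturation nor absolute continuity of $h^c$ prevents the support from being a section of the $\mathbb P^1$-bundle. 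The remedy is simply to drop the intersection with $\mathrm{supp}\,\mu$ and take $A=M\setminus[W^c(\hat B)\cup\mathcal B]$. The paper's own $A$ is likewise not contained in $\mathrm{supp}\,\mu$, and the clause $A\subset\mathrm{supp}\,\mu$ in the definition is never actually used in Section \ref{sec contra}; it appears to be superfluous.
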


\begin{proof}By definition of $\hat B$, for any $\hat x \notin\hat{B}$, $\limsup_{n \rightarrow + \infty}\|Df^{n}|_{W^c(\hat x)}\| \to \infty$. Using local trivialization and applying Lemma \ref{liag}, we get for any $\hat x\notin \hat B$, there exists $b(\hat x) \in W^c(\hat x)$ such that for any $x, x' \in A_0 (\hat{x}) := W^c (\hat{x}) \backslash
   b(\hat x) $,  
  $$ \liminf_{n \rightarrow + \infty} d^c (f^{n} x, f^{n} x') = 0.$$ Let
  \[ A (\hat{x}) : = \bigcap_{n \in \mathbb{Z}} f^n A_0 (\hat{f}^{- n}
     \hat x) ,~~ A := \bigcup_{\hat{x} \in \hat{M} \backslash \hat{B}} A (\hat{x}).\]
Then $A$ is $f$-invariant, and $\liminf_{n \rightarrow + \infty} d^c \left( f^{n } {x, f^n}  x' \right) = 0 $ for any $x \in A, x' \in A \cap W^c (x)$.

We then prove that $A \cap L_{\mathrm{loc}}^{cu}(x)$ has full $m_x^{cu}$-measure for $\mu$-a.e. $x \in M$, where $m_x^{cu}$ denotes the leafwise Lebesgue measure. Let $W^c(A)$ be the $W^c$-saturated set containing $A$. It follows from the definition that $W^c(A) \setminus A$ intersects each center leaf in a Lebesgue null set. By Fubini's theorem (Proposition \ref{fub}), $A \cap L^{cu}_\mathrm{loc}(x)$ has full $m^{cu}_x$-measure if and only if $W^c(A) \cap L^{cu}_\mathrm{loc}(x)$ has full $m^{cu}_x$-measure.

Since $W^c(A) = M \setminus W^c(\hat{B})$ and $\hat{\mu}(\hat{B}) = 0$, the set $W^c(A)$ has full $\mu$-measure. Recall that $\mu$ is a Gibbs $u$-state, $W^c(A) \cap W^u_{\text{loc}}(x)$ has full $m^u_x$-measure for $\mu$-a.e. $x \in M$. Again by Fubini's theorem (Proposition \ref{fub}), $W^c(A) \cap L^{cu}_\mathrm{loc}(x)$ has full $m^{cu}_x$-measure for $\mu$-a.e. $x \in M$.\color{black}
\end{proof}

\section{The contracting case: the center holonomy is holomorphic}\label{sec contra}
In this section we prove the following proposition.
\begin{proposition}
  \label{d2}If $f$ is a $\mu$-center contraction, then for any $\hat x \in \mathrm{supp} \hat \mu$ and any $x, y \in W^c (\hat{x})$, the center holonomy between unstable leaves $h^{c}_{x y} : W^u_{\mathrm{loc}} (x) \rightarrow W^u_{\mathrm{loc}} (y)$ is holomorphic. 
\end{proposition}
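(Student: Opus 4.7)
The plan is to pass to the non-stationary linearization charts and study the conjugated center holonomy
\[ H_0 \assign \Phi_y^{-1} \circ h^c_{xy} \circ \Phi_x : E^u_x \to E^u_y. \]
Since $\Phi_x, \Phi_y$ are biholomorphisms (Proposition \ref{affine}), $h^c_{xy}$ is holomorphic if and only if $H_0$ is. By Proposition \ref{cac} together with the global nature of fibered $su$-holonomy (Proposition \ref{holo}), $H_0$ is a global quasiconformal map between the complex lines $E^u_x, E^u_y \cong \mathbb{C}$, and hence is differentiable Lebesgue-almost everywhere by Proposition \ref{l26}. The strategy will be to show that $DH_0(w)$ is $\mathbb{C}$-linear at a.e.\ $w$: this forces the Beltrami coefficient of $H_0$ to vanish almost everywhere, and combined with quasiconformality this yields that $H_0$ is conformal, hence holomorphic.

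The central identity is the equivariance
\[ H_0 = (Df^n_y|_{E^u})^{-1} \circ H_n \circ Df^n_x|_{E^u}, \qquad H_n \assign \Phi_{f^n y}^{-1} \circ h^c_{f^n x, f^n y} \circ \Phi_{f^n x}, \]
obtained by combining $\Phi_{fx} \circ Df_x = f \circ \Phi_x$ with $f \circ h^c_{xy} = h^c_{fx,fy} \circ f$. Crucially, the outer factors $Df^n|_{E^u}$ are $\mathbb{C}$-linear (since $f$ is holomorphic and $\dim_{\mathbb{C}} E^u = 1$); whenever $H_n$ is close to the identity near $0 \in E^u_{f^n x}$, the right-hand side produces a nearly $\mathbb{C}$-linear approximation of $H_0$. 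The obstacle is that, upon iterating forward, the domain where $H_n$ is controllable pulls back to a ball of radius $O(\|Df^n_x|_{E^u}\|^{-1})$ around $0 \in E^u_x$, which shrinks to a point; so the equivariance pins down $H_0$ only infinitesimally near $0$. To extract information at a general point I will translate the base points.

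Pick $w_0 \in E^u_x$ at which $H_0$ is differentiable, and set $z_0 \assign \Phi_x(w_0)$, $y_0 \assign h^c_{xy}(z_0)$. By absolute continuity of $h^c$ along $W^u$ (Proposition \ref{cac}), Proposition \ref{fub}, and the contraction hypothesis, for a.e.\ such $w_0$ both $z_0, y_0 \in A$, so the contraction property supplies a subsequence $n_k \to \infty$ with $d^c(f^{n_k}z_0, f^{n_k}y_0) \to 0$. Define the rebased holonomy $H_0^{(z_0,y_0)} \assign \Phi_{y_0}^{-1} \circ h^c_{z_0 y_0} \circ \Phi_{z_0}$. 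By Proposition \ref{l9}, the linearization charts based at $(x,y)$ and those based at $(z_0,y_0)$ differ by affine maps with $\mathbb{C}$-linear derivatives, so $DH_0(w_0)$ is $\mathbb{C}$-linear if and only if $DH_0^{(z_0,y_0)}(0)$ is. Along the subsequence $n_k$, the continuous dependence of the linearization on its base point (Proposition \ref{affine}(3)) combined with the fact that local center holonomies $h^c_{z,z'}$ converge to the identity on a uniform unstable neighborhood as $d^c(z,z') \to 0$ yields the key convergence $H^{(z_0,y_0)}_{n_k} \to \text{Id}$ in $C^0$ on a uniformly sized disk $B(0, R_0) \subset E^u_{f^{n_k} z_0}$. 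Securing this uniform-in-$k$ estimate is the main technical step of the whole argument.

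Using the rebased equivariance $H_0^{(z_0,y_0)} = (Df^{n_k}_{y_0}|_{E^u})^{-1} \circ H^{(z_0,y_0)}_{n_k} \circ Df^{n_k}_{z_0}|_{E^u}$, evaluate at a sequence $w_k \to 0$ chosen so that $|Df^{n_k}_{z_0}\, w_k|$ tends to $0$ more slowly than the $C^0$ error $\epsilon_{n_k} \assign \|H^{(z_0,y_0)}_{n_k} - \text{Id}\|_{C^0(B(0,R_0))}$ (e.g.\ $|Df^{n_k}_{z_0}\, w_k| = \sqrt{\epsilon_{n_k}}$). A routine computation then gives
\[ H_0^{(z_0,y_0)}(w_k) = T_{n_k}\, w_k + o(|w_k|), \qquad T_{n_k} \assign (Df^{n_k}_{y_0}|_{E^u})^{-1} \circ Df^{n_k}_{z_0}|_{E^u}, \]
where each $T_{n_k}$ is $\mathbb{C}$-linear. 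The differentiability of $H_0^{(z_0,y_0)}$ at $0$, applied along $w_k$ of varying arguments, forces the real-linear derivative $DH_0^{(z_0,y_0)}(0)$ to coincide with the common limit of $T_{n_k}$ (extracted along a further subsequence, with boundedness of $|T_{n_k}|$ supplied a priori by $\|DH_0^{(z_0,y_0)}(0)\|$), hence to be $\mathbb{C}$-linear. Thus $DH_0$ is $\mathbb{C}$-linear at a.e.\ point, proving holomorphicity of $h^c_{xy}$ for $(x,y)$ in a full-measure subset of pairs on $W^c(\hat x)$. To upgrade to \emph{every} $x, y \in W^c(\hat x)$ with $\hat x \in \text{supp}\,\hat\mu$, I will combine the continuous dependence of $h^c_{xy}$ on its base points with Weierstrass's theorem, as local uniform limits of holomorphic maps on $\mathbb{C}$ are holomorphic.
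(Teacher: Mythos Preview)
Your proposal is correct and follows essentially the same route as the paper: pass to the normal-form charts $\Phi$, use quasiconformality of the conjugated holonomy (Proposition \ref{cac}), exploit the $f$-equivariance together with conformality of $Df|_{E^u}$ and the contraction hypothesis to force $\bar\partial H_0=0$ at almost every point, then conclude by Weyl's lemma (Lemma \ref{qsl}) and upgrade to all of $\text{supp}\,\hat\mu$ via $C^0$ limits (the paper's Lemma \ref{c0}).

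Two minor remarks. First, your appeal to Proposition \ref{holo} for a \emph{global} $H_0$ is misplaced: that proposition concerns $h^{s},h^{u}:W^c\to W^c$, not the center holonomy $h^c$ between unstable leaves, and in any case only the local statement of Proposition \ref{cac} is needed. Second, your pointwise sequence argument for $\mathbb C$-linearity of $DH_0^{(z_0,y_0)}(0)$ works, but the paper packages the same idea more cleanly (Lemma \ref{l12}): it pulls back a fixed circle $C^u_{f^{n_k}z_0}(r)$, on which $H_{n_k}$ maps into the annulus $[(1-\epsilon)r,(1+\epsilon)r]$, through the conformal maps $Df^{n_k}|_{E^u}$ to circles of shrinking radius $r_k\to 0$ about $0$, directly yielding the $(1+\epsilon)/(1-\epsilon)$ quasiconformality bound at the origin without needing to control $|T_{n_k}|$ separately.
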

The following lemma will be used in this section and later. For any $x \in M$ and $\delta>0$, denote $W^u_\delta(x)$ the unstable disk centered at $x$ with radius $\delta$. 
\begin{lemma}\label{l12 abst}
    Let $f$ be a holomorphic fibered partially hyperbolic diffeomorphism on a compact complex 3-fold. If $h^{c}_{xx'}:W^u_{\mathrm{loc}}(x) \to W^u_{\mathrm{loc}}(x')$ is differentiable at $x$ and there is a sequence of holomorphic map $\varphi_n:W^u_\delta(f^nx) \to M $ such that the $C^0$-distance
$$\liminf_{n \rightarrow +\infty} d_{C^0}(h^c_{f^nx f^nx'},\varphi_n) \to 0,$$
for some $\delta>0$, \color{black}then $\bar{\partial} (\Phi_{x'}^{-1}\circ h^c_{xx'}\circ \Phi_x)(0) = 0$. 
\end{lemma}

\begin{proof}
The strategy is to control the quasiconformal distortion on a small circle centered at $x$ inside $E^u_x$. Let $C^u_x(r_0)$ be the circle of radius $r_0$ in $E^u_x$ centered at $x$. For any $0<r_1 < r_2 \in \mathbb{R}$, let $R_x^u(r_1, r_2)$ be the closed annulus of radius $r_1, r_2$ centered at $x$.

By passing to a subsequence, we assume $\lim_{k \rightarrow +\infty} \red{d_{C^0}(h^c_{f^{n_k}x f^{n_k}x'},\varphi_{n_k})} \to 0.$ Let $\Phi_x : E^u_x \to W^u(x)$ be the holomorphic linearization given by Proposition \ref{affine}. Since $\varphi_{n_k}$ is holomorphic (conformal), for any $\epsilon>0$, there exists $r=r(\epsilon)$ and $K=K(\epsilon)$ such that for any $k\geq K(\epsilon)$, 
\begin{equation}\label{eqn: C0 close id}
(\Phi_{f^{n_k} x'}^{-1} \circ h^{c}_{f^{n_k} x f^{n_k} x'} \circ \Phi_{f^{n_k} x}) (C^u_{f^{n_k}(x)}(r))\subset R_{f^{n_k} x'}^u((1-\epsilon)r, (1+\epsilon)r).
\end{equation}
Using the fact $Df|_{E^u}$ is an expanding conformal linear map, we get there exists a subsequence  $r_k=r_k(x,\epsilon)\to 0$ such that 
$$Df^{n_k}_x|_{E^u}(C^u_x(r_k))=C^u_{f^{n_k}x}(r).$$
Therefore we have 
\begin{eqnarray*}
   && \Phi_{x'}^{-1} \circ h^{c}_{xx'} \circ \Phi_{x}(C^u_x(r_k)) \\
& = & (\Phi_{x'}^{-1} \circ f^{-n_k} \circ \Phi_{f^{n_k} x'}) \circ (\Phi_{f^{n_k} x'}^{-1} \circ h^{c}_{f^{n_k} x f^{n_k} x'} \circ \Phi_{f^{n_k} x}) \circ (\Phi_{f^{n_k} x}^{-1} \circ f^{n_k} \circ \Phi_{x})(C^u_x(r_k))\\
    & = & (Df_{x'}^{n_k}|_{E^u})^{-1} \circ (\Phi_{f^{n_k} x'}^{-1} \circ h^{c}_{f^n x f^{n_k} x'} \circ \Phi_{f^{n_k} x}) \circ Df_x^{n_k}|_{E^u}(C^u_x(r_k))\\
    &=& (Df_{x'}^{n_k}|_{E^u})^{-1} \circ (\Phi_{f^{n_k} x'}^{-1} \circ h^{c}_{f^{n_k} x f^{n_k} x'} \circ \Phi_{f^{n_k} x})(C^u_{f^{n_k}(x)}(r))\\
    &\subset & (Df_{x'}^{n_k}|_{E^u})^{-1}(R_{f^{n_k} x'}^u((1-\epsilon)r, (1+\epsilon)r)) \quad \text{by \eqref{eqn: C0 close id}}\\
    &\subset & R_{x'}^u((1-\epsilon)r'_k, (1+\epsilon)r'_k) \quad \text{for some $r'_k>0$, here we use conformality of $Df^{-1}|_{E^u}$.}
\end{eqnarray*}
In particular, for any $v_k, w_k \in C^u_x(r_k)$, we have$$\frac{\|\Phi_{x'}^{-1} \circ h^{c}_{xx'} \circ \Phi_{x}(v_k)\|}{\|\Phi_{x'}^{-1} \circ h^{c}_{xx'} \circ \Phi_{x}(w_k)\|} \le \frac{1+\varepsilon}{1-\varepsilon}.$$
By our assumption, $\Phi_{x'}^{-1} \circ h^{c}_{xx'} \circ \Phi_{x}$ is differentiable at $0 \in E^u$, therefore $D(\Phi_{x'}^{-1} \circ h^{c}_{xx'} \circ \Phi_{x})(0)$ is a $\frac{1+\varepsilon}{1-\varepsilon}$-quasiconformal linear map since $r_k\to 0$. Let $\epsilon\to 0$ we get $D(\Phi_{x'}^{-1} \circ h^{c}_{xx'} \circ \Phi_{x})(0)$ is 1-quasiconformal and thus $\mathbb{C}$-linear, i.e., $\bar \partial (\Phi_{x'}^{-1} \circ h^{c}_{xx'} \circ \Phi_{x})(0)=0$.
\end{proof}
As a corollary, we have
\begin{corollary}\label{l12}
    Let $f$ be a holomorphic fibered partially hyperbolic diffeomorphism on a compact complex 3-fold. If $h^{c}_{xx'}:W^u_{\mathrm{loc}}(x) \to W^u_{\mathrm{loc}}(x')$ is differentiable at $x$, and $\liminf_{n \rightarrow +\infty} d^c(f^n x, f^n x') = 0$ for some $x \in M$ and $x' \in W^c(x)$, then $\bar{\partial} (\Phi_{x'}^{-1}\circ h^c_{xx'}\circ \Phi_x)(0) = 0$. 
\end{corollary}
\begin{proof}
    Take $\varphi_n$ to be the identity map on $W^u_\mathrm{loc}(f^nx)$ and apply Lemma \ref{l12 abst}.
\end{proof}
\color{black}
\begin{lemma}
  \label{4l2} If $f$ is a $\mu$-center contraction, then for $\mu$-a.e. $x\in M$, $h^{c}_{x x'}:W^u_{\mathrm{loc}} (x) \to W^u_{\mathrm{loc}} (x')$ is holomorphic for $\mathrm{vol}_{W^c}$-a.e. $x' \in W^c (x)$.
\end{lemma}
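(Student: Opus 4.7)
The plan is to show that the $\Phi$-conjugated holonomy $\tilde h := \Phi_{x'}^{-1} \circ h^c_{xx'} \circ \Phi_x$, which is quasiconformal by Proposition \ref{cac}, satisfies $\bar\partial \tilde h = 0$ Lebesgue-almost everywhere on its domain, and then to upgrade this to holomorphicity via Lemma \ref{qsl}. The pointwise $\bar\partial$-vanishing will be extracted from Lemma \ref{l12} applied at Lebesgue-generic points of $W^u_{\mathrm{loc}}(x)$.

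I would first fix $x$ in the $\mu$-full set where $A \cap L^{cu}(x)$ has full $m^{cu}_x$-measure, as provided by the definition of $\mu$-center contraction. Applying Proposition \ref{fub} in the form
\[ m^{cu}_x \;\asymp\; \int_{W^c(x)} m^u_y \, dm^c_x(y) \]
to the $m^{cu}_x$-null set $L^{cu}(x) \setminus A$ yields the subset
\[ G := \{ y \in W^c(x) : A \cap W^u_{\mathrm{loc}}(y) \text{ has full } m^u_y\text{-measure} \} \]
of full $m^c_x$-measure in $W^c(x)$. After shrinking the $\mu$-full set of base points appropriately so that $x \in G$, the conclusion of the lemma reduces to showing that $h^c_{xx'}$ is holomorphic for every $x' \in G$.

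Fix such $x'$. For $m^u_x$-a.e. $z \in W^u_{\mathrm{loc}}(x)$ three properties hold at once: (i) $z \in A$, by full measure of $A \cap W^u_{\mathrm{loc}}(x)$; (ii) $h^c_{xx'}$ is differentiable at $z$, by Proposition \ref{cac}; and (iii) $z' := h^c_{xx'}(z) \in A$. For (iii) one uses that $A \cap W^u_{\mathrm{loc}}(x')$ is of full $m^u_{x'}$-measure together with absolute continuity of the inverse holonomy, which follows because $\tilde h$ is a quasiconformal homeomorphism (Proposition \ref{cac}, Lemma \ref{l26}). For such $z$, both $z, z' \in A \cap W^c(z)$, so the contracting property of $A$ gives $\liminf_n d^c(f^n z, f^n z') = 0$, and Lemma \ref{l12} yields $\bar\partial\bigl( \Phi_{z'}^{-1} \circ h^c_{zz'} \circ \Phi_z \bigr)(0) = 0$. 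Since $h^c_{zz'}$ and $h^c_{xx'}$ coincide as germs, the factorization
\[ \Phi_{z'}^{-1} \circ h^c_{zz'} \circ \Phi_z \;=\; (\Phi_{z'}^{-1} \circ \Phi_{x'}) \circ \tilde h \circ (\Phi_x^{-1} \circ \Phi_z), \]
combined with the $\mathbb{C}$-affineness of the outer factors guaranteed by Proposition \ref{l9}, transfers this identity to $\bar\partial \tilde h(\Phi_x^{-1}(z)) = 0$. Hence $\bar\partial \tilde h = 0$ Lebesgue-a.e.\ on its domain, and Lemma \ref{qsl} delivers holomorphicity of $\tilde h$ and thus of $h^c_{xx'}$.

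The main obstacle is item (iii): propagating the Fubini-genericity of $x'$ through the holonomy to guarantee $h^c_{xx'}(z) \in A$ for $m^u_x$-a.e.\ $z$ requires absolute continuity of the \emph{inverse} center holonomy, not just the forward one. This is precisely where the full strength of Proposition \ref{cac}---that $\tilde h$ is quasiconformal, hence bi-absolutely continuous---is used, rather than the weaker statement of one-sided absolute continuity that would suffice for purely real arguments.
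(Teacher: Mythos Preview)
Your proof is correct and follows essentially the same route as the paper's: Fubini via Proposition~\ref{fub} to get $A\cap W^u_{\mathrm{loc}}(x')$ full for $m^c$-a.e.\ $x'$, absolute continuity of the (inverse) center holonomy from Proposition~\ref{cac} to pull this back to $W^u_{\mathrm{loc}}(x)$, Lemma~\ref{l12} at Lebesgue-generic points, the affine transition maps of Proposition~\ref{l9} to transfer $\bar\partial=0$ to the fixed chart, and Lemma~\ref{qsl} to conclude. The paper phrases the absolute continuity step slightly differently---it invokes Proposition~\ref{cac} directly for $h^c_{x'x}$ rather than citing bi-absolute continuity of quasiconformal maps---but the content is the same, and your observation about the role of quasiconformality here is apt.
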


\begin{proof}
Recall that if $f$ is a $\mu$-center contraction, then there exists a subset $A \subset \mathrm{supp}\mu$ such that
  \begin{itemize}
    \item $ \liminf_{n \rightarrow + \infty} d^c (f^n x, f^n x') = 0 $ for any $x \in A$, $x' \in A \cap W^c (x)$;
    \item $A \cap L^{cu}_\mathrm{loc}(x)$ has full (leafwise Riemannian) volume in $L_{\mathrm{loc}}^{cu}(x)$ for $\mu$-a.e. $x$. 
    \end{itemize}
By Fubini's theorem (Proposition \ref{fub}), $A \cap W^u_{\mathrm{loc}} (x')$ is also of full Lebesgue measure for $\mu$-a.e. $x\in M$ and $\mathrm{vol}_{W^c}$-a.e. $x' \in  W^c(x)$. Since $h^{c}_{x x'}$ is absolutely continuous (see Proposition \ref{cac}), if $A \cap W^u_{\mathrm{loc}} (x')$ is of full-Lebesgue measure, then $h^{c}_{x'x}(A \cap W^u_{\mathrm{loc}} (x'))\subset W^u_{\mathrm{loc}}(x)$ is also of full-Lebesgue measure. Moreover, by Proposition \ref{cac}, $h^{c}_{x x'}$ is differentiable for Lebesgue-a.e. $y \in W^u_{\mathrm{loc}}(x)$. Therefore,
\[ A_{xx'}^u \assign \{ y \in W^u_{\mathrm{loc}} (x)|y \in A, y'=h^{c}_{x x'}(y) \in A, h^{c}_{x x'}
     \text{ is differentiable at } y \text\} \]
is an intersection of three subsets of full Lebesgue measure and is of full Lebesgue measure for $\mu$-a.e. $x\in M$ and $\mathrm{vol}_{W^c}$-a.e. $x'\in W^c(x)$.

Now for any $y \in A_{xx'}^u$, $y' = h^{c}_{xx'}(y)$, $h^c_{xx'}$ is differentiable at $y$. Since $h^c_{xx'}$ coincides with $h^c_{yy'}$ near $y$, we have
\begin{eqnarray*}
\bar{\partial}(\Phi_{x'}^{-1} \circ h^{c}_{xx'} \circ \Phi_{x})(\Phi_x^{-1}(y))& = & \bar{\partial} (\Phi_{x'}^{-1} \circ \Phi_{y'}\circ \Phi_{y'}^{-1} \circ h^c_{yy'}\circ \Phi_y)(0)\\
(\text{by Lemma }\ref{l9})& = & \bar{\partial} (H^u_{y' x'} \circ   (\Phi_{y'}^{-1} \circ h^c_{yy'}\circ \Phi_y))(0)\\
(\text{by Corollary } \ref{l12})& = & 0.
\end{eqnarray*}
By Lemma \ref{qsl} and quasiconformality of center holonomy (Proposition \ref{cac}), this implies $\Phi_{x'}^{-1}\circ h^{c}_{xx'}\circ \Phi_x$ is holomorphic, then $h^c_{xx'}$ is holomorphic since $\Phi_x$ and $\Phi_{x'}$ are holomorphic.
\end{proof}
\begin{proof*}{Proof of Proposition \ref{d2}}
By Lemma \ref{4l2}, for $\mu$-a.e.\ $x\in M$, $h^{c}_{x x'}$ is holomorphic for $\mathrm{vol}_{W^c}$-a.e.\ $x' \in W^c (x)$. Since the $C^0$-limit of holomorphic functions is also holomorphic, this implies that $h^{c}_{x x'}$ is holomorphic for all $x' \in W^c (x)$. For any $\hat x \in \operatorname{supp} \hat \mu$ and any $x, y \in W^c (\hat{x})$, we can find sequences $x_n \to x$, $y_n \to y$ with $y_n \in W^c (x_n)$ such that $h^{c}_{x_n y_n}$ is holomorphic. Then $h^{c}_{xy}$ is holomorphic by taking the $C^0$-limit.
\end{proof*}

\section{The isometric case: construction of Gibbs $cu$-state}\label{sec gib}
In this section, we construct the Gibbs $cu$-state. The construction does not essentially rely on the holomorphic or quasiconformal assumption and applies to a large class of fibered partially hyperbolic systems.
\begin{proposition}
  \label{s5}
Let $f$ be a $C^\infty$ fibered partially hyperbolic diffeomorphism of a compact Riemannian manifold $M$, and let $\mu$ be an ergodic Gibbs $u$-state. If $f$ is a $\mu$-center isometry and $W^c$ is absolutely continuous within $W^{cu}$, then there exists a Gibbs $cu$-state $\nu$ (see Definition \ref{gbcu}) whose support is the $c$-saturated subset of $M$ projecting to $\mathrm{supp} \hat{\mu}$ in $\hat{M}$.
\end{proposition}
We begin with a Gibbs $u$-state $\mu_0:=\mu$, then the dual action of the heat semigroup on the measure space gives a family of measures $\mu_t, t \geq 0$. Since $f$ is $\mu$-center isometric, the center Lyapunov exponent vanishes, and we can apply the Avila-Viana invariant principle to deduce that $\mu_t$ has product structures in the $cu$-plaques. Finally, we take a weak$^\ast$- limit point $\nu$ of $\mu_t$ to get a Gibbs $cu$-state. Roughly speaking, $\nu$ is absolutely continuous along $W^c$ by properties of the heat semigroup action, and $\nu$ is absolutely continuous along $W^u$ since it is constructed from a Gibbs $u$-state and center holonomy between $W^u$ leaves are absolutely continuous (which is the key point of the proof).

\subsection{Action of the heat semigroup}\label{subsec heat}
\red{Before going into the proof, we briefly recall some facts about the heat semigroup on manifolds (see {\cite{gri09}} for more details).} Let $(N, g)$ be a closed \red{(compact and without boundary)} Riemannian manifold, $\mathrm{vol}$ be the normalized volume induced by $g$, and $\Delta$ be the Laplace-Beltrami operator associated with $g$. Then there is a semigroup action $P_t : L^2 (N, \mathrm{vol}) \rightarrow L^2 (N, \mathrm{vol})$
for $t > 0$ such that for any $\varphi\in L^2 (N, \mathrm{vol})$,  $u(t, x):=P_t(\varphi)$ is the solution to the Cauchy problem:
\[ \{\begin{array}{l}
     \frac{\partial u}{\partial t} = \Delta_x u, \quad t > 0\\
     u|_{t = 0} = \varphi.
   \end{array} \]

We mention some properties of $P_t$ which will be used later:
\begin{itemize}
  \item $P_t(1) = 1$ for any $t >0$. For any non-negative function $\varphi \in L^2(N)$, $P_t(\varphi) (x) \ge 0$ for any $t > 0, x \in N$. 
  \item If $\gamma : N \rightarrow N'$ is an isometry, then for any $\varphi \in L^2(N')$ 
\begin{equation}\label{he1}
    (P_{N, t}) (\varphi \circ \gamma) = P_{N', t} (\varphi) \circ \gamma
  \end{equation}
  \item For any $\varphi \in L^2(N), x \in N$, 
\begin{equation}\label{he2}
    \lim_{t \rightarrow + \infty} P_t(\varphi) (x) = \int_N \varphi (x) d \mathrm{vol}
  \end{equation}
converges to \red{the average of $\varphi$}. 
\end{itemize}

We turn to the proof now. Recall that $f$ being a $\mu$-center isometry implies that $f|_{W^c (\hat{x})}$ is an isometry with respect to a fiberwise defined Riemannian metric $g_{\hat x}$ for $\hat{\mu}$-a.e. $\hat{x} \in {\hat{M}}$. Let $P_{\hat{x}, t}$ be the heat semigroup on $W^c (\hat{x})$ with respect to $g_{\hat x}$. For any $t > 0$ and any Borel probability measure $\mu$, let $\mu_t$ be the linear functional acting on the space of bounded measurable functions such that for any bounded measurable function $\varphi$:
\[ \int_M \varphi d \mu_t = \int_{\hat{M}} \left( \int_{W^c (\hat{x})} P_{\hat{x}, t} (\varphi |_{W^c (\hat{x})}) d \mu^c_{\hat{x}} \right) d \hat{\mu} (\hat{x}), \]
where $\mu^c_{\hat{x}}$ is the conditional measure of $\mu$ along $W^c (\hat{x})$, and $\hat \mu$ is the projection of $\mu$ to $\hat M$. In particular, we have $\mu_0 = \mu$, and projections of  $\mu_t$ to $\hat M$ are always $\hat \mu$.

\begin{lemma}
 For any $t \geq 0$, $\mu_t$ is an $f$-invariant probability measure.
\end{lemma}
\begin{proof}
Since $P_{\hat x,t}(1) = 1$ and $P_{\hat x,t}(\varphi) (x) \ge 0$ for any $t >0$, any non-negative function $\varphi \in L^2(W^c(\hat x))$ and any $x \in W^c(\hat x)$, $\mu_t$ is a probability measure. Note that $f|_{W^c (\hat{x})}$ is an isometry for $\hat{\mu}$-a.e. $\hat{x} \in {\hat{M}}$, by equation $\eqref{he1}$ above, 
$$  P_{\hat{x}, t} (\varphi \circ f|_{W^c (\hat{x})})  = P_{\hat f\hat{x}, t} (\varphi |_{W^c (\hat f\hat{x})}) \circ f|_{W^c (\hat{x})} ~\text{ for } \hat{\mu}\text{-a.e. } \hat{x} \in {\hat{M}}.$$
Since $\mu$ is $f$-invariant, we have $(f|_{W^c(\hat x)})_\ast \mu^c_{\hat x}= \mu^c_{\hat f \hat x}$ for $\hat \mu$-a.e. $\hat x \in \hat M$ and 
\begin{eqnarray*}
    \int_{W^c (\hat{x})}P_{\hat{x}, t} (\varphi \circ f|_{W^c (\hat{x})}) d \mu^c_{\hat{x}}& = &  \int_{W^c (\hat{x})} P_{\hat f\hat{x}, t} (\varphi |_{W^c (\hat f\hat{x})}) \circ f|_{W^c (\hat{x})} d \mu^c_{\hat{x}} \\
 (\text{definition of push-forward measure)}  & = & \int_{W^c (\hat f\hat{x})} P_{\hat f\hat{x}, t} (\varphi |_{W^c (\hat f\hat{x})}) d (f_\ast\mu^c_{\hat{x}})\\
  (\text{invariance of } \mu^c_{\hat{x}}) & = &  \int_{W^c (\hat f\hat{x})} P_{\hat f\hat{x}, t} (\varphi |_{W^c (\hat f\hat{x})}) d \mu^c_{\hat f\hat{x}}.
  \end{eqnarray*}
Denote $\psi_{t}(\hat x) = \int_{W^c(\hat{x})} P_{\hat{x}, t} (\varphi|_{W^c(\hat{x})}) \, d\mu^c_{\hat{x}}$, then $\int_M \varphi \, d\mu_t = \int_{\hat{M}} \psi_t \, d\hat{\mu}$. Taking into account that $\hat{\mu}$ is $\hat{f}$-invariant, we obtain:
\begin{eqnarray*}
    \int_M \varphi \circ f ~d \mu_t & = & \int_{\hat{M}} \left( \int_{W^c (\hat{x})} P_{\hat{x}, t} (\varphi \circ f|_{W^c (\hat{x})}) ~d \mu^c_{ \hat{x}} \right) d \hat{\mu} (\hat{x})\\
(\text{by equation above)}   & = & \int_{\hat{M}} \left(  \int_{W^c (\hat f\hat{x})} P_{\hat f\hat{x}, t} (\varphi |_{W^c (\hat f\hat{x})}) d \mu^c_{\hat f\hat{x}}\right) d \hat{\mu} (\hat{x})\\
   (\hat{\mu} \text{ is } \hat{f}\text{-invariant}) & = & \int_{\hat{M}} \psi_t(\hat f\hat x)d \hat{\mu} (\hat{x})=\int_{\hat{M}} \psi_t(\hat x)d \hat{\mu} (\hat{x})=\int_M \varphi d \mu_t.
  \end{eqnarray*}
\end{proof}
\subsection{Invariance principle and construction of Gibbs $cu$-state}
Before presenting the proof, we state a version of the invariance principle by Avila-Viana \cite{av10} concerning extremal center Lyapunov exponents and invariant measures. While more general formulations exist, we state here a simplified version sufficient for our purposes.

\begin{proposition}[Corollary 4.3 in \cite{av10}]
    \label{ip}
Let $f: M \to M$ be a fibered partially hyperbolic diffeomorphism on a compact complex 3-fold and $\mu$ an $f$-invariant probability measure. If $f$ is a $\mu$-center isometry, then the conditional measures of $\mu$ along the center foliation are essentially $u$-invariant, i.e., $\mu$ admits a disintegration such that 
    $$(h^u_{xy})_*\mu_x^c = \mu_y^c \quad \text{for all} \quad y \in W^u(x)$$ 
for $\mu$-a.e. $x\in M$.
\end{proposition}

We turn to the proof now. For any $x\in M$, let $L_{\mathrm{loc}}^{cu}(x):=W^c(W^u_{\mathrm{loc}}(x))$ be the $W^c$-saturated set containing $W^u_{\mathrm{loc}}(x)$, $L_{\mathrm{loc}}^{cu}(x)$ is homeomorphic to $W^c(x) \times W^u_{\mathrm{loc}}(x)$.  Denote by $\Pi^{c}$ and $\Pi^u$ the  projections from the \textit{rectangle} $L_{\mathrm{loc}}^{cu}(x)$ to $W^c(x)$ along local unstable leaves and to $W^u_{\mathrm{loc}}(x)$ along local center leaves respectively. From now on we identify $L_{\mathrm{loc}}^{cu}(x)$ with $W^c(x) \times W^u_{\mathrm{loc}}(x)$ in the canonical way (through center and unstable holonomies). 

\begin{lemma}
  \label{product}
  For any $t \geq 0$, the local disintegration
  $\mu_{t,x}^{c u}$ of $\mu_t$ on $L_{\mathrm{loc}}^{cu}(x)$ has a product structure with respect to the topological
  product structure $L_{\mathrm{loc}}^{cu}(x)=W^c (x) \times W^u_{\mathrm{loc}} (x)$, i.e.
  \[ \mu_{t, x}^{c u} = \mu_{t, x}^c \times \Pi^u_{\ast} \mu_{ x}^{c u} \]
  holds for $\mu_t$-a.e. $x \in M$ (note that the second factor on the right hand side does not rely on $t$).
\end{lemma}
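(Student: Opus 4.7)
The strategy is to apply the Avila--Viana invariance principle (Proposition \ref{ip}) directly to each measure $\mu_t$, thereby obtaining $u$-invariance of the center conditional measures of $\mu_t$. The desired product decomposition then follows by unwinding the canonical identification $L^{cu}(x) \cong W^c(x) \times W^u_{\text{loc}}(x)$, which is built out of $u$-holonomies between center fibers.

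The first step is to verify the hypothesis $\lambda^+ \leq 0$ for $\mu_t$. Since $f$ is a $\mu$-center isometry, the uniformly bounded fiberwise metric $g^c$ makes $f|_{W^c(\hat x)}$ an isometry for $\hat\mu$-a.e.\ $\hat x$; combined with uniform comparability of $g^c$ with the ambient metric, this forces $\|Df^n|_{E^c(x)}\|$ to stay bounded in $n$ for every $x$ lying over such $\hat x$, so $\lambda^+(x) = 0$ on an $f$-invariant set $\Omega \subset M$ with $\hat\mu(\pi(\Omega)) = 1$. The construction of $\mu_t$ preserves the projection to $\hat M$, i.e.\ $\hat\mu_t = \hat\mu$ for every $t \geq 0$, so $\mu_t(\Omega) = 1$. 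Proposition \ref{ip} then applies to $\mu_t$ and yields the essential $u$-invariance $(h^u_{xy})_* \mu_{t,x}^c = \mu_{t,y}^c$ for $\mu_t$-a.e.\ $x$ and every $y \in W^u(x)$.

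Next I translate this $u$-invariance into the stated product structure. Fix $x$ in the full $\mu_t$-measure set on which the invariance holds. The canonical identification $L^{cu}(x) \cong W^c(x) \times W^u_{\text{loc}}(x)$ is exactly the $u$-holonomy identification of center fibers, so the above invariance says that under this identification, the center conditionals $\mu_{t,z}^c$ for $z \in W^u_{\text{loc}}(x)$ all coincide with $\mu_{t,x}^c$ on $W^c(x)$. Hence the disintegration of $\mu_{t,x}^{cu}$ along center fibers has a constant fiber measure, and $\mu_{t,x}^{cu} = \mu_{t,x}^c \times \Pi^u_* \mu_{t,x}^{cu}$. To finish, I identify the transverse factor with $\Pi^u_* \mu_x^{cu}$: the restriction $\Pi^u|_{L^{cu}(x)}$ factors through the quotient $\pi: M \to \hat M$, so the pushforward $\Pi^u_* \mu_{t,x}^{cu}$ depends only on $\hat\mu_t = \hat\mu$ and hence equals $\Pi^u_* \mu_x^{cu}$.

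The main conceptual step is the invariance-principle input; once we know that $\lambda^+ = 0$ passes from $\mu$ to every $\mu_t$, which is immediate from the pointwise nature of the center Lyapunov exponent together with $\hat\mu_t = \hat\mu$, the product structure is merely a reformulation of $u$-invariance of center conditionals on a single rectangle. No estimates on the heat semigroup itself are needed beyond its preservation of the projection $\hat\mu$.
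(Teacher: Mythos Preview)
Your proof is correct and follows essentially the same route as the paper: verify that the center Lyapunov exponent of $\mu_t$ vanishes via $\hat\mu_t=\hat\mu$ and the fiberwise isometric metric, apply the Avila--Viana invariance principle (Proposition~\ref{ip}) to obtain $u$-invariance of the center conditionals, read off the product structure on $L^{cu}(x)$, and identify the transverse factor as $t$-independent because $\Pi^u$ factors through the quotient to $\hat M$ (equivalently, $\mu_t$ agrees with $\mu$ on $c$-saturated sets). The paper writes out the product computation on rectangles $I_c\times I_s$ explicitly, but the content is the same.
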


\begin{proof}
  Since $f$ is an $\mu$-isometry and $\hat{\mu}_t=\hat \mu$,  $f$ preserves a measurably fiberwise defined bounded metric for $\mu_t$ almost every center fiber. Therefore the center Lyapunov exponent of $\mu_t$ vanishes. 
  By Proposition \ref{ip}, $\mu_t$ admits a disintegration which is
  $u$-invariant on a full $\mu_t$-measure set $E_t$,  i.e., for any
  $ x \in E_t, y \in  W^u (x)$,
  \[ (h^u_{x y})_{\ast} \mu_{t, {x}}^c = \mu_{t, {y}}^c . \]
For any $x \in E_t$ and measurable sets $I_c \subset W^c(x)$ and $I_s \subset W^u_{\mathrm{loc}}(x)$, we have
 \begin{eqnarray*}
    \mu_{t, x}^{c u} (I_c \times I_s) & = & \int_{W^u_{\mathrm{loc}}(x)}  \mu_{t,y}^c (h^u_{xy}(I_c)) ~  d
     (\Pi^u_{\ast} \mu_{t, x}^{c u})(y)\\
    & = & \int_{W^u_{\mathrm{loc}}(x)}  ((h^u_{x y})_{\ast} \mu_{t, {x}}^c)  (h^u_{xy}(I_c)) ~  d
     (\Pi^u_{\ast} \mu_{t, x}^{c u})(y)\\
     & = & \int_{W^u_{\mathrm{loc}}(x)}  \mu_{t, x}^c (I_c) ~  d
     (\Pi^u_{\ast} \mu_{t, x}^{c u})(y)\\
    & = & \mu_{t, x}^c (I_c) \times
     (\Pi^u_{\ast} \mu_{t, x}^{c u}) (I_s).
  \end{eqnarray*}
Therefore $\mu_{t, x}^{c u} = \mu_{t, x}^c \times
     \Pi^u_{\ast} \mu_{t, x}^{c u}$ holds for $\mu_t$-a.e. $x \in M$. 

Finally, by the definition of  $\mu_t$, $\mu_t$ takes the same value on any $W^c$-saturated set for any $t \geq 0$, therefore
$$(\Pi^u)_{\ast} \mu_{t, x}^{c u}=(\Pi^u)_{\ast} \mu_{0, x}^{c u}=(\Pi^u)_{\ast} \mu_{ x}^{c u}$$
is independent of $t$. 
\end{proof}

Since $\mu_0= \mu$ is a Gibbs $u$-state, we have the following corollary:

\begin{corollary}\label{coro: pi u equiv}
 $\Pi^u_{\ast} \mu_{ x}^{c u} $ is equivalent to the leafwise Lebesgue measure along $W^u_{\mathrm{loc}}(x)$ for $\mu$-a.e. $x\in M$.
\end{corollary}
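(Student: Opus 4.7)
The plan is to combine the product structure from Lemma \ref{product} at $t=0$ with the defining property of the Gibbs $u$-state $\mu=\mu_0$, by comparing two natural disintegrations of $\mu_x^{cu}$ along the unstable foliation inside the rectangle $L^{cu}(x)$.

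First I would specialize Lemma \ref{product} to $t=0$: under the topological product identification $L^{cu}(x)=W^c(x)\times W^u_{\mathrm{loc}}(x)$, for $\mu$-a.e.\ $x\in M$,
$$\mu_x^{cu}=\mu_{0,x}^{cu}=\mu_{0,x}^{c}\times \Pi^u_\ast\mu_x^{cu}.$$
Under this identification, the local unstable plaques inside $L^{cu}(x)$ are precisely the slices $\{z\}\times W^u_{\mathrm{loc}}(x)$ for $z\in W^c(x)$. The standard disintegration of a product measure along its second-coordinate slices therefore yields conditional measures all equal to $\Pi^u_\ast\mu_x^{cu}$.

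Next I would invoke the hypothesis that $\mu=\mu_0$ is a Gibbs $u$-state: its conditionals along local unstable leaves are equivalent to the leafwise Lebesgue measure for $\mu$-a.e.\ point. By transitivity of conditional measures (equivalently, by the essential uniqueness of Rokhlin disintegration recalled in Section~\ref{pre}), the conditionals of $\mu_x^{cu}$ along the unstable plaques inside $L^{cu}(x)$ coincide, up to a null set, with the restrictions of the unstable conditionals of $\mu$, and hence are also equivalent to the leafwise Lebesgue measure.

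Finally, matching the two descriptions of the unstable conditionals of $\mu_x^{cu}$ — namely the constant value $\Pi^u_\ast\mu_x^{cu}$ from the product structure, and a measure equivalent to leafwise Lebesgue from the Gibbs $u$-state property — immediately yields the claim that $\Pi^u_\ast\mu_x^{cu}$ is equivalent to the leafwise Lebesgue measure on $W^u_{\mathrm{loc}}(x)$ for $\mu$-a.e.\ $x$. The only mild subtlety is ensuring compatibility of the two disintegrations on $L^{cu}(x)$, which is a routine consequence of essential uniqueness; no substantive obstacle arises.
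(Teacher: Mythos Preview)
Your strategy---compare the product disintegration at $t=0$ with the Gibbs $u$-state conditionals and invoke essential uniqueness---is the right starting point, but the ``matching'' step hides a real gap. Under the identification $L^{cu}(x)\cong W^c(x)\times W^u_{\mathrm{loc}}(x)$, the slice $\{z\}\times W^u_{\mathrm{loc}}(x)$ is the geometric plaque $W^u_{\mathrm{loc}}(z)$, and the map realizing this identification is the \emph{center holonomy} $\Pi^u|_{W^u_{\mathrm{loc}}(z)}=h^c:W^u_{\mathrm{loc}}(z)\to W^u_{\mathrm{loc}}(x)$. Hence uniqueness of disintegration only gives, for $(\Pi^c_\ast\mu_x^{cu})$-a.e.\ $z\in W^c(x)$,
\[
(h^c_{x,z})_\ast\bigl(\Pi^u_\ast\mu_x^{cu}\bigr)=\mu^u_z\asymp m^u_z,\qquad\text{i.e.}\qquad \Pi^u_\ast\mu_x^{cu}\asymp (h^c_{z,x})_\ast m^u_z.
\]
This is equivalence to the \emph{$h^c$-push-forward of Lebesgue from a typical leaf $W^u_{\mathrm{loc}}(z)$}, not to $m^u_x$. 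Concluding $\Pi^u_\ast\mu_x^{cu}\asymp m^u_x$ requires $(h^c_{z,x})_\ast m^u_z\asymp m^u_x$, which is exactly the absolute continuity of the center holonomy between unstable leaves (Proposition~\ref{cac}). You cannot bypass this by taking $z=x$: that is a null set for the transverse measure, so Rokhlin uniqueness says nothing there.

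This is precisely where the paper's proof diverges from yours. The paper invokes Proposition~\ref{cac} explicitly to transport Lebesgue-null sets through $h^c$ for the direction $\Pi^u_\ast\mu_x^{cu}\ll m^u_x$, and uses the Fubini-type equivalence of Proposition~\ref{fub} for the reverse direction $m^u_x\ll \Pi^u_\ast\mu_x^{cu}$. Once you insert absolute continuity of $h^c$ at the matching step, your argument becomes a correct repackaging of the paper's; as written, however, it silently identifies Lebesgue measures on distinct unstable plaques without justification.
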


\begin{proof}
Let $m_x^u$ be the Lebesgue measure on $W^u_{\mathrm{loc}}(x)$. Since $\mu$ is a Gibbs $u$-state, $\mu^u_x \asymp m^u_x$ for $\mu$-a.e. $x \in M$. We first prove $\Pi^u_{\ast} \mu_{x}^{cu} \ll m_x^u$ for $\mu$-a.e. $x \in M$.

Let $S_x \subset W^u_{\mathrm{loc}}(x)$ be a Lebesgue null set, and $S=W^c(S_x)$ be the $c$-saturated set containing $S_x$. Since $h^{c}$ is absolutely continuous along $W^u$, $S_y:= S \cap W^u_{\mathrm{loc}}(y)=h^{c}_{xy}(S_x)$ is also a Lebesgue null set for any $y\in W^c(x)$. Therefore, 
\[ \mu^{cu}_{x}(S)= \int_{W^c(x)} \mu^u_y(S_y) ~d (\Pi^c_{\ast} \mu_{ x}^{c u})(y) =0. \]
By Lemma \ref{product}, we also have
\[ \mu_{ x}^{c u} (S)= \mu_{ x}^c(W^c(x)) \times (\Pi^u_{\ast} \mu_{ x}^{c u})(S_x)=\Pi^u_{\ast} \mu_{ x}^{c u}(S_x), \]
Therefore, $\Pi^u_{\ast} \mu_{ x}^{c u}(S_x)=0$, i.e., $\Pi^u_{\ast} \mu_{x}^{cu} \ll m_x^u$ for $\mu$-a.e. $x\in M$. 

Similarly, let $S_x \subset W^u_{\mathrm{loc}}(x)$ be a $\Pi^u_{\ast} \mu_{x}^{cu}$ null set, and let $S=W^c(S_x)$. Since $h^c$ is absolutely continuous along $W^u$, we have
$$\int_{W^c (x)} m^u_y (S \cap W^u_{\mathrm{loc}} (y)) d m^c_x (y)=0.$$
Recall that Proposition \ref{fub} says that: 
\[ \int_{W^c (x)} m^u_y d m^c_x (y)  \asymp \int_{W^u_{\mathrm{loc}} (x)} m^c_y d m^u_x (y), \]
thus $0= m^u_x(S_x)$ holds for $\mu$-a.e. $x\in M$. Therefore, $\Pi^u_{\ast} \mu_{x}^{cu} \asymp m_x^u$ for $\mu$-a.e. $x \in M$.
\end{proof}

Let $\nu$ be a weak$^\ast$ limit point of $\mu_t,t\in \mathbb R^+$, then we have:

\begin{proposition}
  \label{nac} $\nu$ is a Gibbs $c u$-state with $\mathrm{supp}
  \nu = \{ x \in M| \hat{x} \in \mathrm{supp} \hat{\mu} \}$.
\end{proposition}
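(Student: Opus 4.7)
The strategy is to take $\nu$ as the weak$^\ast$ limit of a subsequence $\mu_{t_n}$ with $t_n\to+\infty$ (such a subsequence exists by weak$^\ast$ compactness of the space of probability measures) and then to combine the product structure from Lemma \ref{product} with the asymptotic behavior \eqref{he2} of the leafwise heat semigroup. Since each $\mu_t$ is $f$-invariant and $f$ is continuous, the weak$^\ast$ limit $\nu$ is automatically $f$-invariant. Moreover, by construction of $\mu_t$ one has $\widehat{\mu_t}=\hat\mu$ for every $t>0$, so passing to the limit gives $\hat\nu=\hat\mu$.

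The core of the argument is to identify the local conditional $\nu^{cu}_x$ on a $cu$-rectangle $L^{cu}(x)\simeq W^c(\hat x)\times W^u_{\text{loc}}(x)$. Applying duality (self-adjointness of the heat kernel with respect to $\mathrm{vol}_{W^c}$) to the defining formula of $\mu_t$ shows that its center-leaf conditional is $\mu^c_{t,\hat x}=P^\ast_{\hat x,t}\mu^c_{\hat x}$, and the pointwise convergence $P_t(\varphi)\to\int\varphi\,d\mathrm{vol}$ from \eqref{he2}, combined with dominated convergence, yields $\mu^c_{t,\hat x}\rightharpoonup\mathrm{vol}_{W^c(\hat x)}$ weakly as $t\to+\infty$. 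Combined with Lemma \ref{product}, in which the unstable factor $\Pi^u_\ast\mu^{cu}_x$ is already independent of $t$, passing to the limit produces the product expression
\[
\nu^{cu}_x=\mathrm{vol}_{W^c(\hat x)}\times\Pi^u_\ast\mu^{cu}_x.
\]
Both factors are absolutely continuous with respect to the leafwise Lebesgue measure—the center factor tautologically, and the unstable factor by Corollary \ref{coro: pi u equiv}—so $\nu^{cu}_x$ is equivalent to leafwise Lebesgue on $L^{cu}(x)$, which is precisely the Gibbs $cu$ property. For the support statement, $\hat\nu=\hat\mu$ gives $\pi(\mathrm{supp}\,\nu)\subseteq\mathrm{supp}\,\hat\mu$, while full support of $\mathrm{vol}_{W^c(\hat x)}$ on each center leaf above $\mathrm{supp}\,\hat\mu$ forces $\mathrm{supp}\,\nu=\{x\in M:\hat x\in\mathrm{supp}\,\hat\mu\}$.

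The main obstacle is that weak$^\ast$ convergence $\mu_{t_n}\to\nu$ does not automatically descend to conditional measures, so the identification of $\nu^{cu}_x$ needs justification. I would carry this out by testing against continuous functions $\varphi$ supported in a $cu$-foliation chart, first for product-type $\varphi(z,w)=\alpha(z)\beta(w)$, where Lemma \ref{product} decouples $\int\varphi\,d\mu_{t_n}$ into a center integral controlled by \eqref{he2} and a $t$-independent unstable integral against $\Pi^u_\ast\mu^{cu}_x$, and then extending to general $\varphi$ by a density/monotone class argument. The compatibility of the heat flow with the isometric center dynamics given by \eqref{he1} ensures that these pointwise identifications hold on a full-measure set of rectangles, which is enough to conclude.
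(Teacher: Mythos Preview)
Your proposal is correct and follows essentially the same approach as the paper: both use the product structure from Lemma \ref{product} together with the $t$-independence of the unstable factor $\Pi^u_\ast\mu^{cu}_x$, the heat-semigroup asymptotics \eqref{he2} to identify the center conditional as $\mathrm{vol}_{W^c}$, and Corollary \ref{coro: pi u equiv} for absolute continuity of the unstable factor. You are in fact more explicit than the paper about the one genuinely delicate point---that weak$^\ast$ convergence $\mu_{t_n}\to\nu$ need not descend to conditional measures---and your remedy of testing against product functions in $cu$-charts is exactly right; the paper writes $\int\varphi\,d\nu^c_x=\lim_k\int\varphi\,d\mu^c_{t_k,x}$ without further comment. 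The only small omission in your write-up is that the step ``$\mathrm{vol}_{W^c}\times\Pi^u_\ast\mu^{cu}_x$ is equivalent to leafwise Lebesgue on $L^{cu}(x)$'' needs Proposition \ref{fub} to identify the product of the leafwise Lebesgue measures with $m^{cu}_x$, which the paper invokes explicitly.
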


\begin{proof}
  Let $\mu_{t_k} $ be a subsequence weak$^\ast$ converging to $\nu$. By Lemma \ref{product}, for $\nu$-a.e. $x$, the local disintegration of $\nu$ on $L_{\mathrm{loc}}^{cu}(x)$ satisfies
  $$\nu_{x}^{cu} = \nu_{x}^c \times \Pi^u_{\ast} \mu_{x}^{cu}.$$
Notice that here the second factor only depends on $\mu$, and is equivalent to the leafwise Riemannian volume on $W^u_{\mathrm{loc}}(x)$  (by Corollary \ref{coro: pi u equiv}). Then by Fubini's theorem (Proposition \ref{fub}),  to prove Proposition \ref{nac}, it suffices to show $\nu^c_x= \mathrm{vol}_{W^c (x)}$. For any $x \in M$ and any continuous function $\varphi_{x} : W^c (x) \rightarrow \mathbb{R}$, 
  \begin{eqnarray*}
    \int_{W^c (x)} \varphi_{x} d \nu^c_{x} & = & \lim_{k
    \rightarrow \infty} \int_{W^c (x)} \varphi_{x} d \mu^c_{t_k,
    x}=\lim_{k \rightarrow \infty} \int_{W^c (x)} (P_{x, t_k}
    \varphi_{x} )d \mu^c_{x}\\
   \text{by } \eqref{he2}  & = & \int_{W^c (x)} \left(
    \int_{W^c (x)} \varphi_{x} d \mathrm{vol}_{W^c (x)}
    \right) d \mu^c_{x}\\
    & = & \int_{W^c (x)} \varphi_x d \mathrm{vol}_{W^c (x)}.
  \end{eqnarray*}
Therefore $\nu^c_{x} = \mathrm{vol}_{W^c (x)}$ and $\nu$ is a Gibbs $cu$-state. The claim for the support of $\nu$ is a direct consequence of the definition of $cu$-state. 
\end{proof}

\section{The isometric case: the center holonomy is $\mathbb{R}$-linear}\label{sec con}

Recall that by Proposition \ref{cac}, $W^c$ is always absolutely continuous within $W^{cu}$ for a holomorphic fibered partially hyperbolic system $f$ on a compact complex 3-fold $M$. Therefore, Proposition \ref{s5} applies, and we have a Gibbs $cu$-state $\nu$ when $f$ is $\mu$-center isometric for some ergodic Gibbs $u$-state $\mu$. Recall that the normal form on $W^u$ is a family of holomorphic diffeomorphisms $\Phi_x: E^u_x \to W^u(x), x\in M$ (see Proposition \ref{affine}). The center holonomy is then linear on $W^c(\mathrm{supp}\mu)$ under the coordinate given by $\Phi_x$:

\begin{proposition}
  \label{reli}
  Let $\nu$ be a Gibbs $c u$-state, then for any $x, y \in
  \mathrm{supp} \nu, y \in W^c (x)$, $H^c_{xy}:=\Phi_y^{-1} \circ h^{c}_{x y} \circ \Phi_x$ is an $\mathbb R$-linear map. In particular, $h^{c}_{x y}$ is defined on the whole $W^u(x)$. 
\end{proposition}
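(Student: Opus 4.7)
The plan is to combine the $f$-equivariance of the center holonomy written in normal-form coordinates, the quasiconformal distortion bound of Proposition \ref{cac}, and Poincaré recurrence on the support of the Gibbs $cu$-state to extract a scaling symmetry that forces $\mathbb{R}$-linearity. First, from $\Phi_{fx}\circ Df_x|_{E^u}=f\circ\Phi_x$ and $h^c_{xy}=f^{-n}\circ h^c_{f^nx,f^ny}\circ f^n$, we obtain the functional equation
\[
H^c_{xy}(v)=\mu_n^{-1}\,H^c_{f^nx,f^ny}(\lambda_n v), \qquad \lambda_n:=Df^n_x|_{E^u},\; \mu_n:=Df^n_y|_{E^u}\in\mathbb{C}^{*},
\]
in which both $\lambda_n$ and $\mu_n$ act as complex scalar multiplications since $\dim_{\mathbb{C}}E^u=1$. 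Applying this with $n<0$, the argument $\lambda_n v$ contracts into the local domain of $H^c_{f^nx,f^ny}$ for arbitrarily large $|v|$, and compatibility on overlaps is automatic from the equivariance. This extends $H^c_{xy}$ to a globally defined $K$-quasiconformal homeomorphism $E^u_x\to E^u_y$ fixing $0$, the distortion constant $K$ being uniform by Proposition \ref{cac}.

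Second, since $\operatorname{supp}\nu$ projects exactly to $\operatorname{supp}\hat\mu$ (Proposition \ref{nac}) and $\hat\nu$ is $\hat f$-invariant, Poincaré recurrence provides a $\hat\nu$-full, dense subset $R\subseteq\operatorname{supp}\hat\mu$ of recurrent points. For $\hat x\in R$, pick $n_k\to\infty$ with $\hat f^{n_k}\hat x\to\hat x$. In the isometric case, the restrictions $f^{n_k}|_{W^c(\hat x)}$ are equicontinuous because they are isometries of the compact Riemann surface $W^c(\hat x)$; a diagonal subsequence argument using the fibered structure (Proposition \ref{holo}) then allows us to refine so that simultaneously $f^{n_k}x\to x$ and $f^{n_k}y\to y$. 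Continuity of the center holonomy in its base points gives $H^c_{f^{n_k}x,f^{n_k}y}\to H^c_{xy}$ uniformly on compact subsets of $E^u_x$. Substituting into the functional equation, and noting $|\lambda_{n_k}|,|\mu_{n_k}|\to\infty$, yields the self-similarity
\[
H^c_{xy}(\lambda_{n_k} v)=\mu_{n_k}\,H^c_{xy}(v)+o_k(1) \quad\text{uniformly on compacts of }E^u_x.
\]

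Third, we conclude $\mathbb{R}$-linearity from this scaling. Let $\mu_H\in L^{\infty}(E^u_x)$ denote the Beltrami coefficient of the (extended) $H^c_{xy}$. A direct chain-rule computation, using that $\lambda_{n_k}$ and $\mu_{n_k}$ are $\mathbb{C}$-linear, gives
\[
\mu_H(v)=\tfrac{\bar\lambda_{n_k}}{\lambda_{n_k}}\,\mu_{H^c_{f^{n_k}x,f^{n_k}y}}(\lambda_{n_k} v),
\]
so $|\mu_H(v)|=|\mu_H(\lambda_{n_k} v)|+o(1)$ at Lebesgue density points. Because $|\lambda_{n_k}|\to\infty$ in an unrestricted way, a Lebesgue-density-point argument combined with the $L^{\infty}$ bound on $\mu_H$ forces $|\mu_H|$ to be essentially constant on all of $E^u_x$. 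A global $K$-quasiconformal self-homeomorphism of $\mathbb{C}$ with constant Beltrami coefficient fixing $0$ is precisely of the form $v\mapsto av+b\bar v$, i.e.\ $\mathbb{R}$-linear. This handles $x,y$ with $\hat x\in R$; for an arbitrary $x,y\in\operatorname{supp}\nu$ with $y\in W^c(x)$, density of $R$ and a continuity argument as in Lemma \ref{c0} (pointwise limits of $\mathbb{R}$-linear maps are $\mathbb{R}$-linear) upgrade the conclusion to the full support. The second assertion is then automatic, since $h^c_{xy}=\Phi_y\circ H^c_{xy}\circ \Phi_x^{-1}$ and an $\mathbb{R}$-linear map is defined on all of $E^u_x$.

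The main obstacle is the synchronized recurrence in the second step: one must simultaneously control the return of $\hat f^{n_k}\hat x$ to $\hat x$ in the base and the return of $f^{n_k}x, f^{n_k}y$ on the compact, isometrically-acted center fibers. The equicontinuity provided by the isometric action, together with the global $su$-holonomies of Proposition \ref{holo}, should allow a careful diagonal extraction, but this needs to be done with some care. A secondary delicate point is the passage from the discrete scaling relation to genuine constancy of $|\mu_H|$: since $\{\lambda_{n_k}\}$ lies on a sparse subsequence rather than being dense in $\mathbb{C}^{*}$, one needs to exploit both the unbounded blow-up of $|\lambda_{n_k}|$ and the rotational phase factor $\bar\lambda_{n_k}/\lambda_{n_k}$ (ranging widely by further recurrence) to promote the scaling identity to an almost-everywhere constancy.
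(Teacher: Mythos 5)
Your overall strategy (equivariance under the normal form plus recurrence plus quasiconformality) is in the right spirit, but two of your steps have genuine gaps, and they are exactly the points where the paper takes a different, more robust route.

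First, the self-similarity $H^c_{xy}(\lambda_{n_k}v)=\mu_{n_k}H^c_{xy}(v)+o_k(1)$ does not follow from what you establish. You only have $H^c_{f^{n_k}x,f^{n_k}y}\to H^c_{xy}$ uniformly on \emph{compact} subsets, but in the functional equation you must evaluate $H^c_{f^{n_k}x,f^{n_k}y}$ at the points $\lambda_{n_k}v$, which escape every compact set; moreover any additive error would have to be $o(|\mu_{n_k}|)$ with $|\mu_{n_k}|\to\infty$ to survive the division by $\mu_{n_k}$. Second, even granting the scaling relation, your endgame is incomplete: the Beltrami identity $\mu_H(v)=\frac{\bar\lambda_{n_k}}{\lambda_{n_k}}\mu_H(\lambda_{n_k}v)$ at best constrains $|\mu_H|$ along the sparse orbit $v\mapsto\lambda_{n_k}v$, and constancy of $|\mu_H|$ does not imply constancy of $\mu_H$ (e.g.\ $\mu_H(v)=\tfrac12 e^{i\arg v}$ has constant modulus but the associated quasiconformal map is not $\mathbb R$-linear). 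You flag this as ``delicate'' but the needed argument is precisely the missing content. A smaller issue is the synchronized recurrence of the pair $(x,y)$: the clean way to get it is Poincar\'e recurrence for the lifted system on the bundle $\mathcal E=\{(x,y):y\in W^c(x)\}$ with the invariant measure obtained by disintegrating $\nu$ along center leaves, rather than a diagonal extraction on fibers (which only gives $f^{n_k}x\to g(x)$ for some limiting isometry $g$, not $g=\mathrm{id}$).

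The paper avoids both obstructions by working with the \emph{derivative} of the holonomy rather than the map itself: quasiconformality (Proposition \ref{cac}) gives a.e.\ differentiability, a Fubini argument for the Gibbs $cu$-state produces a full-measure set where $h^c_{xy}$ is differentiable at the base point, and a Lusin-plus-recurrence lemma on $\mathcal E$ shows this derivative field is equivariant under the stable and unstable holonomies $H^s,H^u$ of the cocycle $Df|_{E^u}$. Combined with Proposition \ref{l9} (the transition maps $\Phi_y^{-1}\circ\Phi_x$ are affine with derivative $H^u_{xy}$), this makes the derivative of $\Phi_y^{-1}\circ h^c_{xy}\circ\Phi_x$ \emph{constant} almost everywhere, and Proposition \ref{l26} then upgrades a quasiconformal map with continuous a.e.\ derivative to a $C^1$, hence linear, map. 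If you want to salvage your approach, you would need to replace the Beltrami scaling argument by this holonomy-equivariance of the derivative, or else supply a genuine proof that the discrete scaling relation forces the Beltrami coefficient itself (not just its modulus) to be a.e.\ constant.
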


The strategy for proving the linearity of center holonomy follows from \cite{bx18}. Since we do not assume $f$ is volume-preserving, we use the Gibbs $cu$-state $\nu$ we constructed to replace the invariant volume used in \cite{bx18}. This brings no essential difficulty since the disintegration of $\nu$ along $W^c$, $W^u_{\mathrm{loc}}$, and $W^{cu}_{\mathrm{loc}}$ is equivalent to the leafwise Lebesgue measures. Since the proof is very similar to that in \cite{bx18}, we put the proof in Appendix \ref{proof reli}. 

\begin{remark}\label{rs5m}
The proof of  Proposition \ref{reli} does not essentially rely on the holomorphic assumption of $f$. To prove the $\mathbb R$-linearity of the center holonomy maps, it suffices to assume $f$ is a $C^\infty$ uniformly quasiconformal fibered partially hyperbolic diffeomorphism and is a $\mu$-isometry. We leave the details to the interested reader. 
\end{remark}
\section{The isometric case: the center holonomy is holomorphic}\label{sec d1}
In this section, we prove the following key proposition: 
\begin{proposition}\label{d1}
    If $f$ is a $\mu$-center isometry, then for any $x \in M$, $\hat{x} \in \mathrm{supp} \hat{\mu}$, $y \in W^c (x)$, then the center holonomy along the unstable direction
    $h^{c}_{x y} : W^u_{\mathrm{loc}} (x) \rightarrow W^u_{\mathrm{loc}} (y)$
    is holomorphic. 
\end{proposition}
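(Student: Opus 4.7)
The aim is to upgrade the $\mathbb{R}$-linearity of $H^c_{xy} := \Phi_y^{-1} \circ h^c_{xy} \circ \Phi_x : E^u_x \to E^u_y$, established in Proposition \ref{reli}, to $\mathbb{C}$-linearity on all pairs $(x,y)$ with $\hat x \in \mathrm{supp}\,\hat\mu$ and $y \in W^c(x)$. Since the non-stationary linearizations $\Phi_x, \Phi_y$ are holomorphic (Proposition \ref{affine}), this will force $h^c_{xy}$ to be holomorphic. The plan is to interpret the failure of $H^c_{xy}$ to be $\mathbb{C}$-linear as an obstruction to the fibration $L^{cu}(x) = W^c(W^u_{\mathrm{loc}}(x)) \to W^u_{\mathrm{loc}}(x)$ being holomorphic on the complex surface $L^{cu}(x)$, and then rule out this obstruction using the restricted moduli of the (compact) center fibers in this low-dimensional situation.

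First I set up the complex geometry. By Lemma \ref{leaf}, Corollary \ref{sho} and Proposition \ref{holo}, $L^{cu}(x)$ is a complex surface on which $W^u$ is a holomorphic subfoliation by disks, $W^c$ is a $C^\infty$ foliation by compact complex curves (the $C^\infty$ regularity coming from Sections \ref{sec gib}--\ref{sec con}), and these two foliations topologically present $L^{cu}(x)$ as a fiber bundle over the holomorphic disk $W^u_{\mathrm{loc}}(x)$. Writing $H^c_{xy}(v) = a(x,y) v + b(x,y)\bar v$ with $a,b$ continuous, the equivariance $H^c_{fx,fy} = Df_y|_{E^u} \circ H^c_{xy} \circ (Df_x|_{E^u})^{-1}$, combined with the fact that $Df|_{E^u}$ is multiplication by a complex scalar $\lambda_x$ (since $\dim_{\mathbb{C}} E^u = 1$), yields the cocycle identity
\[ b(fx,fy) = (\lambda_y/\bar\lambda_x)\,b(x,y). \]
The coefficient $b(x,y)$ measures the $\bar\partial$-component of the transverse holonomy of $W^c$ in the holomorphic transversal coordinates given by $\Phi_x, \Phi_y$, and is naturally a Kodaira--Spencer-type obstruction to $L^{cu}(x) \to W^u_{\mathrm{loc}}(x)$ being a holomorphic fibration.

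The key step is to show $b \equiv 0$. The $C^\infty$-affine family of transverse holonomies $\{H^c_{xy}\}_{y \in W^c(x)}$ together with the ambient holomorphic structure on $L^{cu}(x)$ yields a $C^\infty$ period / $j$-invariant map from the one-complex-dimensional disk $W^u_{\mathrm{loc}}(x)$ into the moduli space of the fiber curve. A non-vanishing $b(x,y)$ would force this map to have non-zero $\bar\partial$, and restricting to a single compact center leaf would produce a compact complex curve inside the moduli space of the fiber curves. In the present three-dimensional setting the base of the fibration is one-dimensional and the fibers are one-dimensional, so the relevant moduli are either trivial or a non-compact affine variety; in either case no such compact complex curve exists, so the period map must be holomorphic, and tracing this back through the identification forces $b \equiv 0$, hence $H^c_{xy}$ is $\mathbb{C}$-linear.

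The main obstacle is precisely the key step: rigorously identifying the $\bar v$-coefficient $b(x,y)$ with the $\bar\partial$-derivative of a period map of an elliptic (more generally, genus-$g$) fibration on $L^{cu}(x)$, and pinning down the rigidity statement that rules out compact complex curves in the relevant moduli space. This is where the three-dimensional hypothesis of Theorem \ref{t2} becomes decisive: in higher dimensions the moduli of complex tori become rich enough to admit compact complex subvarieties, which is exactly what enables the non-holomorphic-center-distribution counterexamples on Blanchard--Calabi manifolds from Theorem \ref{thm: constru 5d}.
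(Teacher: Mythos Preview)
Your instinct that the argument hinges on an elliptic fibration and rigidity of compact curves in the moduli of elliptic curves is right, but you have the roles of base and fiber reversed, and this makes the fibration you wrote down uninformative.

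In your setup $L^{cu}(x) \to W^u_{\mathrm{loc}}(x)$ the base is a non-compact disk and the fibers are center leaves. Two things fail. First, the projection along $W^c$ is not known to be holomorphic---that is precisely what you are trying to prove---so you do not have a holomorphic fibration and hence no holomorphic period map to which moduli-theoretic rigidity could apply. Second, even ignoring that, the center fibers over different points of $W^u_{\mathrm{loc}}(x)$ are already biholomorphic via the unstable holonomy $h^u$ (Corollary \ref{sho}); any period map from $W^u_{\mathrm{loc}}(x)$ is therefore constant and carries no information about your coefficient $b(x,y)$. Your sentence ``restricting to a single compact center leaf would produce a compact complex curve inside the moduli space'' has no clear meaning in this picture: the center leaf is a fiber, not a base. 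The cocycle identity $b(fx,fy) = (\lambda_y/\bar\lambda_x)\,b(x,y)$ you derived is correct but by itself insufficient, since $|\lambda_y|/|\lambda_x|$ is not uniformly bounded away from $1$.

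The paper runs the fibration the other way. Since $W^u$ holomorphically subfoliates $W^{cu}$ (Corollary \ref{sho}), the projection $\pi_W: W^{cu}(x) \to W^c(x)$ along unstable leaves \emph{is} holomorphic, and the base $W^c(x)$ \emph{is} compact. The fibers are the unstable leaves, biholomorphic to $\mathbb{C}$ via $\Phi$; the key idea is to compactify them by quotienting. Fix a lattice $\Lambda_x \subset E^u_x$ and for $v \in \Lambda_x$ define the fiberwise translation $T_v: W^{cu}(x) \to W^{cu}(x)$, $p \mapsto \Phi_y(\Phi_y^{-1}(p) + H^c_{xy}(v))$ with $y = W^u(p) \cap W^c(x)$; this uses only $\mathbb{R}$-linearity of $H^c$. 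The crucial lemma your proposal is missing is that $T_v$ is holomorphic: along $W^u$ it is a translation in the holomorphic chart $\Phi$, and along $W^c$ one checks directly that $T_v|_{W^c(p)}$ coincides with the unstable holonomy $h^u_{p,T_v(p)}$, holomorphic by Corollary \ref{sho}; the holomorphic Journ\'e lemma (Proposition \ref{cjou}) finishes it. The quotient $N = W^{cu}(x)/\Lambda_x$ is then a genuine holomorphic submersion $\pi_N: N \to W^c(x)$ with elliptic fiber $\pi_N^{-1}(y) \cong E^u_y / H^c_{xy}(\Lambda_x)$, whose complex modulus \emph{does} see the $\bar v$-part of $H^c_{xy}$. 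Compactness of the base forces this to be a holomorphic fiber bundle (Lemma \ref{cgl}), so all fibers are biholomorphic; Lemma \ref{cal} then gives $H^c_{yy'} \in \mathbb{C}^\times \cdot GL(2,\Lambda)$ in each local trivialization of $E^u|_{W^c(x)}$, and continuity plus connectedness collapse the discrete factor, yielding $\mathbb{C}$-linearity.
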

The proof relies on a well-behaved fiberwise translation map $T: W^{cu}(x) \to W^{cu}(x)$ along $W^u$. Although we do not know if $h^c$ is holomorphic a priori, we can still show that the map $T: W^{cu}(x) \to W^{cu}(x)$ is actually a holomorphic diffeomorphism. Consequently, the corresponding quotient manifold of the center-unstable leaf is a holomorphic elliptic fibration. Note that any holomorphic map from $W^c(x)$ to $\mathrm{Mod}(\mathbb{T}^2) = \mathbb{H} / \mathrm{SL}(2, \mathbb{Z})$ is trivial, which forces the elliptic fibration to be a fiber bundle, i.e., all fibers are holomorphically equivalent. In particular, this implies that the center foliation is holomorphic.

Take an arbitrary $x \in M$ with $\hat{x} \in \mathrm{supp} \mu$. Recall that $\Phi_x : E^u_x \to W^u(x)$ is a holomorphic diffeomorphism, it gives a translation structure on $W^u(x)$, i.e., for any $v\in E^u_x$, there is a translation map on $W^u(x)$ given by 
$$p \mapsto \Phi_x (\Phi_x^{-1} (p)
+ v) , p \in W^u(x),$$
which is a holomorphic automorphism of $W^u(x)$. By Proposition \ref{reli}, the center holonomy along the unstable direction is $\mathbb R$-linear under the chart $\Phi_x$, i.e. $\Phi_y^{-1} \circ h^c_{x y}\circ \Phi_x= H^c_{xy}$ is $\mathbb R$-linear for any $y\in W^c(x)$. Therefore we can extend translation map along $W^u(x)$ to a fiberwise translation map on $W^{cu}(x)$ via center holonomy maps. To be precise, for any $v\in E^u_x$, there is a homeomorphism 
\[ T_v : W^{c u} (x) \rightarrow W^{c u} (x), \quad p \mapsto \Phi_y (\Phi_y^{-1} (p)
   + H^{c}_{x y} (v)) \]
where $y = W^u (p) \cap W^c (x)$. It follows the definition that $T_v$ is a translation when restricted to the unstable leaves and is holomorphic along $W^u$. Moreover, we prove that: 

\begin{lemma}
For any $v \in E^u_x$, $T_v : W^{c u} (x) \rightarrow W^{c u} (x)$ is a holomorphic diffeomorphism. 
\end{lemma}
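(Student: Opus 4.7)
The plan is to apply the holomorphic Journ\'{e}'s Lemma (Proposition \ref{cjou}) on $W^{cu}(x)$, with the two transverse subfoliations being $W^u$ and $W^c$; their leaves are complex submanifolds by Lemma \ref{leaf}. Continuity of $T_v$ is clear from the continuity of the normal-form family $\{\Phi_y\}$ and of the center holonomy $h^c$, so it suffices to verify holomorphicity of $T_v$ restricted to each leaf of $W^u$ and to each leaf of $W^c$.

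Holomorphicity along $W^u$ is immediate from the definition: for each $y \in W^c(x)$, the restriction $T_v|_{W^u(y)}$ is the translation of $E^u_y$ by the vector $H^c_{xy}(v)$, read through the holomorphic chart $\Phi_y$, and is therefore a composition of holomorphic maps.

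The crux is holomorphicity along the center leaves. I would first show that $T_v$ carries each center leaf of $W^{cu}(x)$ to another center leaf. Given $p \in W^{cu}(x)$ and $q \in W^c(p)$, set $y_p = \Pi^c(p)$ and $y_q = \Pi^c(q)$, so that $y_p, y_q \in W^c(x)$ and $h^c_{y_p y_q}(p) = q$. The cocycle identity $H^c_{x y_q} = H^c_{y_p y_q} \circ H^c_{x y_p}$, combined with the $\mathbb{R}$-linearity of the maps $H^c$ established in Proposition \ref{reli}, yields by a direct computation
\[
h^c_{y_p y_q}(T_v(p)) = T_v(q),
\]
so $T_v(q) \in W^c(T_v(p))$. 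Since also $T_v(q) \in W^u(q)$, the restriction $T_v|_{W^c(p)}$ coincides with the unstable holonomy $h^u_{p, T_v(p)} : W^c(p) \to W^c(T_v(p))$ between two center leaves inside $W^{cu}(x)$, which is holomorphic by Corollary \ref{sho} (applied to $f^{-1}$ as indicated in its proof).

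Applying Proposition \ref{cjou} with these verifications, $T_v$ is holomorphic on $W^{cu}(x)$. The same reasoning with $-v$ in place of $v$ yields that $T_{-v}$ is holomorphic, and since these maps are mutual inverses, $T_v$ is a holomorphic diffeomorphism. The main obstacle in this plan is verifying that $T_v$ preserves the foliation $W^c$; once the cocycle identity for $h^c$ interacts cleanly with the $\mathbb{R}$-linearity of $H^c$, the remaining pieces follow directly from the tools assembled earlier.
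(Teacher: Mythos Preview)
Your proposal is correct and follows essentially the same approach as the paper: apply the holomorphic Journ\'{e} lemma after checking holomorphicity along $W^u$ (immediate) and along $W^c$ (by identifying $T_v|_{W^c(p)}$ with the unstable holonomy $h^u_{p,T_v(p)}$, which is holomorphic by Corollary \ref{sho}). The paper carries out the same identification via an explicit chain of equalities, while you phrase it through the cocycle relation $H^c_{xy_q}=H^c_{y_p y_q}\circ H^c_{xy_p}$ together with the $\mathbb{R}$-linearity of $H^c$; the two computations are equivalent. Your added remark that $T_{-v}$ is a holomorphic inverse makes the ``diffeomorphism'' part explicit, which the paper leaves implicit.
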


\begin{proof}
 Since $T_v$ is holomorphic along $W^u$, by the holomorphic Journ\'{e}'s lemma (Proposition \ref{cjou}), it suffices to prove that $T_v$ is holomorphic along $W^c$. For any $p\in W^{cu}(x)$, we will show that $T_v|_{W^c (p)} : W^c
  (p) \rightarrow W^{c u} (x)$ coincides with the unstable holonomy $h^u_{p T_v(p)} : W^c (p) \rightarrow W^c (T_v(p))$, which is holomorphic (see Proposition \ref{sho}). 
 
\red{By Proposition \ref{holo}, the unstable holonomy along center fibers is globally defined.} For any $p' \in W^c (p)$, let $y =
  W^u (p) \cap W^c (x)$ and $y' = W^u (p') \cap W^c (x)$. By Proposition \ref{reli}, we have
\[ \Phi_{y'} \circ H^{c}_{y  y'} = h^{c}_{y y'} \circ \Phi_y . \]
 A direct calculation gives:
  \begin{eqnarray*}
    h^u_{p T_v(p)} (p') & = & W^u (p') \cap W^c (T_v  (p)) = h^{c}_{p p'} (T_v(p))=h^{c}_{y y'} (T_v(p))\\
    & = & h^{c}_{y y'} \circ \Phi_y (\Phi_y^{-1}(p) + H^{c}_{x y} (v))\\
    & = & \Phi_{y'} \circ H^{c}_{y  y'} (\Phi_y^{-1} (p) + H^{c}_{x y}
    (v))\\
    & = & \Phi_{y'} (\Phi_{y'}^{-1} \circ h^{c}_{y y'} (p) + H^{c}_{y
    y'} \circ H^{c}_{x
    y} (v))\\
    & = &\Phi_{y'} (\Phi_{y'}^{-1} (p') + H^{c}_{x
    y'} (v))= T_v (p') .
  \end{eqnarray*}
 Consequently, $T_v|_{W^c (p)} = h^u_{p T_v(p)}$ is holomorphic. 
 
 In summary, for any $v\in E^u_x$, $T_v$ is holomorphic along both $W^c$ and $W^u$, therefore by Proposition\ref{cjou},  $T_v$ is a holomorphic diffeomorphism of $W^{cu}(x)$.
\end{proof}

For any $\mathbb{R}$-linearly independent $v_1, v_2 \in E^u (x)$, let $\Lambda_x=\mathrm{span}_ {\mathbb Z} \{v_1,v_2\} $ be a lattice in $E^u_x$. Then $\{T_{v} \mid v \in \Lambda_x  \}$ gives a properly discontinuous holomorphic action of $\Lambda_x$ on $W^{c u} (x)$. Let $N \simeq W^{c u} (x) / \Lambda_x$ be the quotient manifold. 
\begin{lemma}\label{fibra}
    There is a holomorphic submersion $\pi_N:N \to W^c(x)$ such that $\pi_N^{-1}(y)$ is a complex torus of dimension 1 for any $y\in W^c(x)$.
\end{lemma}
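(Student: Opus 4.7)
The plan is to obtain $\pi_N$ by descending a natural holomorphic submersion $\sigma: W^{cu}(x) \to W^c(x)$ to the quotient $N = W^{cu}(x)/\Lambda_x$. I would define $\sigma(p) := W^u(p) \cap W^c(x)$; this intersection is a single point for every $p \in W^{cu}(x)$ because fiberedness of $f$ provides global $su$-holonomy maps (Proposition \ref{holo}). Then $\sigma$ is a holomorphic submersion, since $W^u$ and $W^c$ are transverse holomorphic subfoliations of $W^{cu}$ (Corollary \ref{sho}): in any local holomorphic bifoliation chart of $W^{cu}(x)$, $\sigma$ is simply the coordinate projection onto the $W^c$-direction.

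The second step is to verify that $\sigma$ is $\Lambda_x$-invariant. For any $v \in \Lambda_x$ and any $p \in W^{cu}(x)$ with $y = \sigma(p)$, the definition of the translation gives
$$T_v(p) \;=\; \Phi_y\bigl(\Phi_y^{-1}(p) + H^c_{xy}(v)\bigr) \;\in\; \Phi_y(E^u_y) \;=\; W^u(y) \;=\; W^u(p),$$
so $\sigma(T_v(p)) = \sigma(p)$. Because the $\Lambda_x$-action is free, properly discontinuous, and by holomorphic diffeomorphisms (as noted just above the statement of the lemma), the quotient projection $W^{cu}(x) \to N$ is a local biholomorphism, and $\sigma$ descends to a holomorphic submersion $\pi_N: N \to W^c(x)$.

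Finally, I would compute the fibers. For $y \in W^c(x)$, the preimage $\sigma^{-1}(y)$ is precisely the entire unstable leaf $W^u(y)$, which the normal form $\Phi_y$ identifies holomorphically with the complex line $E^u_y \cong \mathbb{C}$ (Proposition \ref{affine}). Under this identification, the $\Lambda_x$-action restricts to translation by the additive subgroup $H^c_{xy}(\Lambda_x) \subset E^u_y$. The only point requiring attention\textemdash and what I regard as the key step\textemdash is to show that $H^c_{xy}(\Lambda_x)$ is a genuine rank-two lattice in the real $2$-plane $E^u_y$. This, however, is immediate from Proposition \ref{reli}: $H^c_{xy}: E^u_x \to E^u_y$ is an $\mathbb{R}$-linear isomorphism, hence it maps the rank-two lattice $\Lambda_x$ bijectively to a rank-two lattice. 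Therefore $\pi_N^{-1}(y) \cong E^u_y / H^c_{xy}(\Lambda_x)$ is a compact complex torus of complex dimension one, completing the proof. I do not anticipate any genuine obstacle here: the argument is essentially an unwinding of the $\mathbb{R}$-linear rigidity already secured in Proposition \ref{reli} together with the holomorphic nature of the transverse subfoliations.
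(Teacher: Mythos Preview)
Your proposal is correct and follows essentially the same approach as the paper: both define the natural holomorphic submersion $W^{cu}(x)\to W^c(x)$ coming from the holomorphic subfoliation of $W^{cu}$ by $W^u$ (Corollary~\ref{sho}), check that it is $\Lambda_x$-invariant because $T_v$ preserves unstable leaves, descend to $N$, and identify the fiber over $y$ with $E^u_y/H^c_{xy}(\Lambda_x)$ via $\Phi_y$. Your write-up is in fact slightly more explicit than the paper's in that you spell out why $H^c_{xy}(\Lambda_x)$ is a genuine rank-two lattice (invoking the $\mathbb{R}$-linear isomorphism from Proposition~\ref{reli}), a point the paper takes for granted.
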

\begin{proof}
Recall that $W^u$ holomorphically subfoliates $W^{c u}$ (see Proposition \ref{sho}), there is a natural holomorphic submersion $\pi_W : W^{c u} (x) \rightarrow W^c (x)$. The fiberwise action of $T_v$ along $W^u$ is translation (under the coordinate given by $\Phi$) and commute with $\pi_W$ in the sense:
$$\pi_W \circ T_v =  \pi_W \text{ for any } v \in \Lambda_x. $$
Therefore $\pi_W$ induces a holomorphic submersion $\pi_N : N \rightarrow W^c (x)$. By the definition of $T_v$, for any $y \in W^c(x)$, 
\[ \pi^{- 1}_N (y) = \Phi_y(E^u(y))/ \Phi_y(H^c_{xy}(\Lambda_x)) \simeq E^u(y)/ H^c_{xy}(\Lambda_x)\]
is a complex torus of dimension $1$. 
\end{proof}

We need the following fact from complex geometry:

\begin{lemma}[Section 3 in \cite{cat04}]\label{cgl}
  Let $\pi : N \rightarrow C$ be a holomorphic submersion between compact
  complex manifolds such that all fibers $\pi^{- 1} (y), y \in C$ are complex
  tori of dimension $1$. Then $\pi$ is a holomorphic fiber bundle. In
  particular, these fibers are holomorphic equivalent.
\end{lemma}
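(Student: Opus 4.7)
The plan is to reduce the claim of local holomorphic triviality to the Fischer--Grauert theorem, which asserts that a proper holomorphic submersion between complex manifolds is holomorphically locally trivial as soon as all its fibers are pairwise biholomorphic. Since $N$ is compact, $\pi$ is automatically proper, so the task reduces to proving that every two fibers $X_c := \pi^{-1}(c)$ are biholomorphic as Riemann surfaces of genus one.

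For this I would construct the $j$-invariant of the family as a global holomorphic function $j : C \to \mathbb{C}$ and then invoke compactness. The construction proceeds via two ingredients. First, Grauert's base change theorem, together with the constancy of $\dim H^0(X_c,\Omega^1_{X_c}) = 1$ for elliptic curves, implies that the direct image $\pi_{*}\Omega^1_{N/C}$ is a holomorphic line bundle on $C$. Second, by Ehresmann's theorem $\pi$ is a $C^\infty$ fiber bundle with fiber a real $2$-torus, so on any contractible neighborhood $U \subset C$ the local system $R^1\pi_*\mathbb{Z}$ is trivial and we may choose a continuous basis $\gamma_1(c),\gamma_2(c)$ of $H_1(X_c,\mathbb{Z})$ over $U$. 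Taking a nowhere-vanishing local holomorphic section $\omega$ of $\pi_{*}\Omega^1_{N/C}$ and setting $\omega_i(c) := \int_{\gamma_i(c)} \omega|_{X_c}$ produces holomorphic functions $\omega_1,\omega_2 : U \to \mathbb{C}$ whose ratio $\tau(c) := \omega_2(c)/\omega_1(c)$ lands in the upper half-plane after possibly swapping the $\gamma_i$. The modular $j$-invariant is $\mathrm{SL}(2,\mathbb{Z})$-invariant on the upper half-plane, so the locally defined function $c \mapsto j(\tau(c))$ is independent of the choice of section, basis, and trivialization, and patches to a single holomorphic function $j : C \to \mathbb{C}$.

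Now compactness of $C$ forces every holomorphic function on $C$ to be constant, so $j$ is constant across $C$. Since $j$ is a complete biholomorphism invariant of Riemann surfaces of genus one, this yields the pairwise biholomorphism of the fibers, and Fischer--Grauert then upgrades this to a holomorphic fiber bundle structure on $\pi$. The main technical hurdle in carrying this out carefully is verifying the holomorphicity of the period functions $\omega_i$ (hence of $\tau$, hence of $j$); this is the reason we work with the direct image sheaf $\pi_{*}\Omega^1_{N/C}$ and explicit relative periods, rather than attempting to read off the modulus of $X_c$ abstractly.
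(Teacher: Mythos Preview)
The paper does not prove this lemma; it is quoted from Catanese \cite{cat04} and used as a black box in Section~\ref{sec d1}. So there is no ``paper's own proof'' to compare against.

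Your argument is correct and is essentially the standard route to this result: construct the period map/$j$-invariant, use compactness of $C$ to force it to be constant, and then invoke Fischer--Grauert. A couple of minor remarks. First, you implicitly use that $C$ is connected when concluding $j$ is constant; in the paper's application $C = W^c(x)$ is a connected compact Riemann surface, so this is fine, but strictly the lemma as stated only says ``compact''. Second, your sketch of the holomorphicity of the periods $\omega_i$ via $\pi_*\Omega^1_{N/C}$ and the local system $R^1\pi_*\mathbb{Z}$ is the right level of care; one can also argue more directly that $\tau$ is holomorphic by noting that the Kodaira--Spencer map lands in $H^1(X_c, \Theta_{X_c})$ and the moduli of elliptic curves is one-dimensional, but your version is cleaner. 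Finally, note that Fischer--Grauert already gives the holomorphic fiber bundle structure once you know the fibers are pairwise biholomorphic, so your last sentence about the ``main technical hurdle'' is accurately identifying where the content lies.
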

We also need the following fact from complex analysis, we denote by $\mathrm{GL}(2,\Lambda):=\{g\in \mathrm{GL}(2,\mathbb R): g\Lambda=\Lambda\}$. 
\begin{lemma}[\cite{ahl06}]\label{cal}
    Let $E_1= \mathbb C / \Lambda$ be a complex torus of dimension $1$. Then for any $A \in \mathrm{GL}(2,\mathbb R)$,  $E_2:=\mathbb C/  A (\Lambda) $ is holomorphic equivalent to $E_1$ if and only if $A \in  \mathbb C^\times \cdot \mathrm{GL}(2, \Lambda)$. 
\end{lemma}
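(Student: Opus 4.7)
The plan is to reduce Lemma \ref{cal} to the standard classification of one-dimensional complex tori up to biholomorphism, and then to unpack the conclusion into the stated form $A\in\mathbb C^{\times}\cdot\mathrm{GL}(2,\Lambda)$.

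The main intermediate claim I would establish is the following well-known fact: two $1$-dimensional complex tori $\mathbb C/\Lambda_1$ and $\mathbb C/\Lambda_2$ are holomorphically equivalent if and only if there exists $\lambda\in\mathbb C^{\times}$ with $\Lambda_2=\lambda\,\Lambda_1$. To prove this, I would take a biholomorphism $F:\mathbb C/\Lambda_1\to\mathbb C/\Lambda_2$, compose with a translation so that $F$ sends $0$ to $0$, and lift $F$ to a holomorphic map $\widetilde F:\mathbb C\to\mathbb C$ of universal covers with $\widetilde F(0)=0$. Equivariance forces $\widetilde F(z+\omega)-\widetilde F(z)\in\Lambda_2$ for each fixed $\omega\in\Lambda_1$; since the left side is a continuous function of $z$ into the discrete set $\Lambda_2$, it is a constant depending only on $\omega$. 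Differentiating shows $\widetilde F{}'$ is $\Lambda_1$-periodic, hence descends to a holomorphic function on the compact torus $\mathbb C/\Lambda_1$, and is therefore constant by the maximum principle (or Liouville). Thus $\widetilde F(z)=\lambda z$ for some $\lambda\in\mathbb C^{\times}$, and the equivariance $\widetilde F(\Lambda_1)\subset\Lambda_2$ combined with the symmetric argument for $F^{-1}$ yields $\lambda\Lambda_1=\Lambda_2$.

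Applying this to the setting of the lemma, $E_1=\mathbb C/\Lambda$ and $E_2=\mathbb C/A(\Lambda)$ are biholomorphic if and only if there is $\lambda\in\mathbb C^{\times}$ such that $A(\Lambda)=\lambda\,\Lambda$. This last equality is equivalent to $\lambda^{-1}A\in\mathrm{GL}(2,\mathbb R)$ preserving the lattice $\Lambda$, i.e. $\lambda^{-1}A\in\mathrm{GL}(2,\Lambda)$, which is exactly the statement $A\in\mathbb C^{\times}\cdot\mathrm{GL}(2,\Lambda)$. Conversely, if $A=\lambda\,B$ for some $\lambda\in\mathbb C^{\times}$ and $B\in\mathrm{GL}(2,\Lambda)$, then $A(\Lambda)=\lambda\,B(\Lambda)=\lambda\,\Lambda$, so multiplication by $\lambda$ on $\mathbb C$ descends to a biholomorphism $E_1\to E_2$.

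There is no real obstacle here beyond being careful about which direction uses the $\mathbb R$-linearity of $A$: the hypothesis $A\in\mathrm{GL}(2,\mathbb R)$ (rather than an arbitrary diffeomorphism) is exactly what makes $A(\Lambda)$ a lattice in $\mathbb C$ so that the quotient is again a complex torus, and it is what makes $\lambda^{-1}A\in\mathrm{GL}(2,\Lambda)$ a genuine $\mathbb Z$-linear automorphism of $\Lambda$. The only subtle point to check carefully in the write-up is that $\mathrm{GL}(2,\Lambda)$ is being interpreted as the group of $\mathbb R$-linear isomorphisms of $\mathbb C\cong\mathbb R^{2}$ preserving $\Lambda$ setwise, after which the argument is a routine application of the classification above.
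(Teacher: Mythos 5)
Your proof is correct, and it is exactly the standard argument underlying the statement: the paper itself gives no proof of Lemma \ref{cal}, citing Ahlfors instead, and your lifting-to-the-universal-cover argument (the lift fixes $0$, is affine by periodicity of its derivative plus Liouville, hence is multiplication by some $\lambda\in\mathbb C^{\times}$ with $\lambda\Lambda_1=\Lambda_2$) is precisely the classical classification the citation refers to. Your concern about the meaning of $\mathrm{GL}(2,\Lambda)$ is settled by the paper's own definition just before the lemma, $\mathrm{GL}(2,\Lambda)=\{g\in \mathrm{GL}(2,\mathbb R): g\Lambda=\Lambda\}$, which matches your interpretation, so the translation of $A(\Lambda)=\lambda\Lambda$ into $A\in\mathbb C^{\times}\cdot\mathrm{GL}(2,\Lambda)$ goes through as you wrote it.
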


\begin{proof*}{Proof of Proposition \ref{d1}}
  Since $W^u$ holomorphically subfoliate $W^{cu}$, $E^u|_{W^c(x)}$ is a holomorphic line bundle over $W^c(x)$. We claim that $h^c$ is holomorphic in every connected local charts of $E^u|_{W^c(x)}$. For any $y \in W^c(x)$, take $(U, \varphi)$ be a connected local trivialization chart of $y$, i.e., $y\in U\subset W^c(x)$ and 
  $\varphi : E^u|_U \to U \times \mathbb C$ is a holomorphic diffeomorphism. For any $y' \in U$, we have $$\pi_N^{-1}(y') \simeq E^u(y')/  H^c_{xy'}(\Lambda_x) \simeq \mathbb C/ \varphi (H^c_{xy'}(\Lambda_x))$$
By Lemma \ref{fibra} and Lemma \ref{cgl}, $\pi_N^{-1}(y) \simeq \pi_N^{-1}(y')$ are holomorphic equivalent tori, therefore, 
$$\mathbb C/ \varphi (H^c_{xy}(\Lambda_x)) \simeq \mathbb C/ \varphi (H^c_{xy'}(\Lambda_x)) =\mathbb C/  (\varphi \circ H^c_{y y'} \circ \varphi^{-1}) (\varphi (H^c_{xy}(\Lambda_x))). $$
Since $H^c_{yy'}$ is $\mathbb R$-linear, $A_{yy'}:=\varphi \circ H^c_{y y'} \circ \varphi^{-1} \in \mathrm{GL}(2,\mathbb R).$ By Lemma \ref{cal}, $A_{yy'} \in \mathbb C^\times \cdot \mathrm{GL}(2, \varphi (H^c_{xy}(\Lambda_x)))$ for any $y' \in U$. Since $\mathrm{GL}(2, \varphi (H^c_{xy}(\Lambda_x)))$ is discrete, then by continuity of $H^c_{(\cdot ,\cdot)}$ and connectedness of $U$, 
we have $A_{y y'} \in \mathbb C^\times$ for any $y' \in U$. In particular,
$$\Phi_{y'}^{-1} \circ h^{c}_{y y'} \circ \Phi_y=H^{c}_{y y'} =\varphi^{-1} \circ A_{y y'} \circ \varphi $$
is $\mathbb C$-linear and $h^c_{yy'}$ is holomorphic for any $y' \in U$. This implies that $h^c$ is holomorphic on $W^c(x)$ since $W^c(x)$ is covered by finite local trivialization charts. 
\end{proof*}

\section{Proof of Theorem \ref{t2}} \label{sec t2}
We first prove that the center holonomy is holomorphic everywhere.  
\begin{proposition}\label{prop center holo}
    For any $x_1\in M$ and $x_2\in W^c(x_1)$, the center holonomy map $h^{c}_{x_1x_2}$ is holomorphic.
\end{proposition}
\color{red}Before proceeding to the proof, we state a general lemma for partially hyperbolic diffeomorphisms. Let $m$ denote a Lebesgue measure on $M$, and let $m^\ast$ ($\ast = s,c,u$) denote the leafwise Lebesgue measures along $\ast$-leaves. 

\begin{lemma}\label{8.2}
    Let $f:M \to M$ be a $C^2$ partially hyperbolic diffeomorphism and $S\subset M$ be an $f$-invariant open set. If $\mathrm{supp}\, \mu \subset S$ for every ergodic Gibbs $u$-state $\mu$, then ${S} \cap {W}^u_{\mathrm{loc}}({x})$ has full ${m}^u_{{x}}$-measure for every $x \in M$. 
\end{lemma}

\begin{proof}
Assume by contradiction that ${S} \cap {W}^u_{\mathrm{loc}}({x})$ does not have full measure for some $x \in M$. Then the set ${D} := (M \setminus {S}) \cap {W}^u_{\mathrm{loc}}({x})$ has positive $m^u_{{x}}$-measure. By Lemma 11.12 in \cite{bdv04}, any accumulation point $\nu$ of the probability measures
\[ \nu_n = \frac{1}{n} \sum_{j=0}^{n-1} f^j_* \left( \frac{m_{D}}{m^u_x(D)} \right) \]
is a Gibbs $u$-state, where $m_{D}$ is the Lebesgue measure on $D$. Note that $\mathrm{supp}\, \nu_n \subset \bigcup_{i\leq n} f^n(D)$ for all $n \in \mathbb{Z}^+$ and ${M} \setminus {S}$ is an ${f}$-invariant closed set, we obtain $\mathrm{supp}\, {\nu} \subset {M} \setminus {S}$. Since ergodic components of Gibbs $u$-states remain Gibbs $u$-states (see Proposition \ref{gbu}), this contradicts the assumption that $\mathrm{supp}\, {\mu} \subset {S}$ for all ergodic Gibbs $u$-states.
\end{proof}

Now we return to the proof of Proposition \ref{prop center holo}. For any $x\in M$ and $\delta>0$, denote by $W^u_{\delta}(x)$ the disk centered at $x$ with radius $\delta$. Recall that $\pi: M\to \hat M$ is the canonical projection defined in the beginning of Section \ref{sec dic}. Define the set
\begin{equation}\label{eqn: S def} S:= \{{x} \in {M} \mid \exists \delta=\delta(\hat x)>0 \text{ such that } h^c_{x_1x_2}: W^u_{\delta}(x_1) \to W^u(x_2) \text{ is holomorphic}  ~ \forall x_1,x_2 \in W^c({x})\}.\end{equation}
By construction, ${S}$ is an ${f}$-invariant $c$-saturated set. 
\begin{lemma}For any ergodic Gibbs $u$-state $\mu$, we have 
$\mathrm{supp}\,{\mu} \subset S$. 
\end{lemma}

\begin{proof}
Recall that for any ergodic Gibbs $u$-state $\mu$, we have:
\begin{itemize}
    \item $f$ is either a $\mu$-center isometry or a $\mu$-center contraction (Proposition \ref{dic}).
    \item If $f$ is a $\mu$-center isometry, then $h^{c}_{x_1x_2}: W^u_{\mathrm{loc}}(x_1) \rightarrow W^u_{\mathrm{loc}}(x_2)$ is holomorphic for any $\hat x \in \text{supp}\, \hat \mu$ and any $x_1,x_2 \in W^c(\hat{x})$ (Proposition \ref{d1}).
    \item If $f$ is a $\mu$-center contraction, then $h^{c}_{x_1x_2}: W^u_{\mathrm{loc}}(x_1) \rightarrow W^u_{\mathrm{loc}}(x_2)$ is holomorphic for any $\hat x \in \text{supp}\, \hat \mu$ and any $x_1,x_2 \in W^c(\hat{x})$ (Proposition \ref{d2}).
\end{itemize}
Since $W^c$ is a compact foliation, for any $\hat x \in \text{supp}\, \hat \mu$, there exists a uniform constant $\delta = \delta(\hat x)>0$ such that $h^{c}_{x_1x_2}$ is well-defined on $W^u_\delta(x_1)$ for any $x_1,x_2 \in W^c(\hat x)$. Moreover, since $\mathrm{supp}\, \mu$ is always $u$-saturated, the above facts imply that $\mathrm{supp}\,{\mu} \subset S$ for any ergodic Gibbs $u$-state $\mu$. 
\end{proof}

\begin{lemma}
    $S$ is open. 
\end{lemma}

\begin{proof}
Since $S$ is saturated by center leaves, it suffices to show that its projection $\hat S \subset \hat M$ is open. Since $f$ is a fibered partially hyperbolic diffeomorphism, the quotient dynamics $\hat{f}: \hat{M} \to \hat{M}$ is an Anosov homeomorphism with local product structure \cite{gog11}. To prove that $\hat S$ is open, it suffices to show that $\hat W^u_\mathrm{loc}(\hat x) \subset \hat S$ and $\hat W^s_\mathrm{loc}(\hat x) \subset \hat S$ for any $\hat x \in \hat S$. 

It follows immediately from the definition that $\hat W^u_\mathrm{loc}(\hat x) \subset \hat S$. Fix a $\hat x\in \hat S$, we now prove that $\hat y \in \hat S$ for any $\hat y \in \hat W^s_\mathrm{loc}(\hat x)$. Since $\hat x \in \hat S$, there exists $\delta = \delta(\hat x) > 0$ such that $h^c$ is holomorphic on the piece $$L^{cu}_\delta=L^{cu}_\delta(\hat x) := \bigcup_{x \in W^c(\hat x)} W^u_\delta(x)\subset M,$$
Now we define 
$$\mathcal L^{cu}_\delta(\hat y) = h^s(L^{cu}_\delta)\subset M.$$
Note that $\mathcal L^{cu}_\delta(\hat y)$ may not be the same as $\bigcup_{y \in W^c(\hat y)} W^u_\delta(y)$. But there exists a uniform $\delta'>0$ such that 
\begin{equation}\label{eqn: unif low bd}
\mathcal L^{cu}_\delta(\hat y)\supset \bigcup_{y \in W^c(\hat y)} W^u_{\delta'}(y)    
\end{equation}
for any $\hat y \in \hat W^s_\mathrm{loc}(\hat x)$.

The proof that $\hat y \in \hat S$ reduces to showing holomorphicity of $h^c_{y_1y_2}: W^u(y_1) \cap \mathcal L^{cu}_\delta(\hat y) \to W^u(y_2)$ for any $y_1, y_2 \in W^c(\hat y)$, following an argument similar to the proof of Proposition \ref{d2}: for any $w_1 \in  W^u(y_1) \cap\mathcal L^{cu}_\delta(\hat y)$, $w_2 = h^c_{y_1 y_2}(w_1)$, let $z_1= W^s_\mathrm{loc}(w_1) \cap L^{cu}_\delta(\hat x)$ and $z_2= W^s_\mathrm{loc}(w_2) \cap L^{cu}_\delta(\hat x)$. Consider the sequence $\varphi_n = h^c_{f^nz_1 f^nz_2}: W^u_\mathrm{loc}(f^nz_1) \to W^u_\mathrm{loc}(f^nz_2)$. Each $\varphi_n$ is holomorphic since $z_1,z_2 \in  L^{cu}_\delta(\hat x)$.  And we have  $\lim_{n\to+\infty} d_{C^0}(h^c_{f^ny_1 f^ny_2}, \varphi_n) = 0$ since $d(f^n z_1,f^ny_1) \to 0$ and $d(f^n z_2,f^ny_2) \to0$. 

By Lemma \ref{l12 abst}, if $h^c_{y_1y_2}$ is differentiable at $w_1$, then $\bar{\partial}(\Phi_{w_2}^{-1}\circ h^c_{w_1w_2}\circ \Phi_{w_1})(0) = 0$ (here $h^c_{y_1y_2}$ and $h^c_{w_1w_2}$ actually denote the same local map). Similar to the equality in the proof of Lemma \ref{4l2}, we have
\begin{eqnarray*}
\bar{\partial} (\Phi_{y_2}^{-1}\circ h^c_{y_1y_2}\circ \Phi_{y_1})(\Phi_{y_1}^{-1}(w_1))& = & \bar{\partial} (\Phi_{y_2}^{-1} \circ \Phi_{w_2}\circ \Phi_{w_2}^{-1} \circ h^c_{w_1w_2}\circ \Phi_{w_1})(0)\\
(\text{by Lemma }\ref{l9})& = & \bar{\partial} (H^u_{w_2 y_2} \circ (\Phi_{w_2}^{-1} \circ h^c_{w_1w_2}\circ \Phi_{w_1}))(0) \\
& = & 0.
\end{eqnarray*}
Since $h^c_{y_1y_2}$ is quasiconformal (Proposition \ref{cac}), it is differentiable for $m^u_{y_1}$-a.e. $w_1 \in W^u(y_1) \cap \mathcal{L}^{cu}_\delta(y)$. By Lemma \ref{qsl} and the equality of the derivatives above, this implies $\Phi_{y_2}^{-1}\circ h^c_{y_1y_2}\circ \Phi_{y_1}$ is holomorphic and $h^c_{y_1y_2}$ is holomorphic on $W^u(y_1) \cap \mathcal{L}^{cu}_\delta(y)$. Therefore by \eqref{eqn: unif low bd} and the definition of $S$, $\hat y\in \hat S$ for any $\hat y \in \hat W^s_\mathrm{loc}(\hat x)$. Hence we complete the proof of openness of $\hat S$ and $S$.
\end{proof}

\begin{proof*}{Proof of Proposition \ref{prop center holo}}
For any $x_1 \in M, x_2 \in W^c(x_1)$, it follows from the definition of $S$ that $h^c_{x_1x_2}:W^u_\mathrm{loc}(x_1) \to W^u_\mathrm{loc}(x_2)$ is holomorphic on ${S} \cap {W}^u_{\mathrm{loc}}({x_1})$. Applying Lemma \ref{8.2}, we see that ${S} \cap {W}^u_{\mathrm{loc}}({x_1})$ has full ${m}^u_{{x}}$-measure. By Proposition \ref{qsl} and the quasiconformality of center holonomy (Proposition \ref{cac}), we conclude that $h^c_{x_1x_2}$ is holomorphic on the entire $W^u_{\mathrm{loc}}(x_1)$. 
\end{proof*}

\color{black}We then prove that $W^{cs}$ is a holomorphic foliation. Recall that $H^s$ is the stable holonomy of the cocycle $D f|_{E^u}$ (see Proposition \ref{cocycle}). Given $x\in M, y \in W^{cs}_{\mathrm{loc}}(x)$, we let $z$ be the unique intersection point of $W^c_{\mathrm{loc}}(x)$ with $W^s_{\mathrm{loc}}(y)$ and define
$$H^{cs}_{xy}=H^s_{zy} \circ Dh^{c}_{xz}.$$
The following lemma is proved in \cite{bx18} for uniformly $u$-quasiconformal partially hyperbolic diffeomorphism and applies in our setting: 
\begin{lemma}[Lemma 28 in \cite{bx18}]\label{l28}
    Let $f$ be a fibered holomorphic partially hyperbolic diffeomorphism on a compact complex 3-fold. If $h^c$ is $C^1$, then for any $x \in M, y\in W^{cs}(x)$, the center-stable holonomy $h^{cs}_{xy}:W^u_{\mathrm{loc}}(x) \to W^u_{\mathrm{loc}}(y)$ is $C^1$ with derivative $H^{cs}$.
\end{lemma}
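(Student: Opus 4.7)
The plan is to reduce $C^1$ regularity to an almost-everywhere identification of the derivative and then upgrade via Proposition \ref{l26}. I would factor $h^{cs}_{xy} = h^s_{zy}\circ h^c_{xz}$ through $z = W^c_{\text{loc}}(x)\cap W^s_{\text{loc}}(y)$, mirroring the factorization $H^{cs}_{xy} = H^s_{zy}\circ H^c_{xz}$ of the candidate derivative, and handle each factor separately in the non-stationary linearization charts $\Phi$. First I would verify that $\Phi_y^{-1}\circ h^{cs}_{xy}\circ \Phi_x$ is quasiconformal: the center factor is quasiconformal by (the argument behind) Proposition \ref{cac}, and the stable factor restricted to a single $W^u$ plaque is $C^\infty$ by the classical PSW-type theorem \cite{psw97} together with Journé's lemma, since uniform $u$-quasiconformality forces $Df|_{E^u}$ to be fiber bunched. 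In particular, $h^{cs}_{xy}$ is differentiable Lebesgue-a.e. on $W^u_{\text{loc}}(x)$.

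At a point of differentiability $p\in W^u_{\text{loc}}(x)$, set $p' := h^{cs}_{xy}(p)$ and $q := h^c_{xz}(p)$, so that $p' = h^s_{zy}(q)$. The chain rule gives $D_p h^{cs}_{xy} = D_q h^s_{zy}\circ D_p h^c_{xz}$ almost everywhere. For the stable factor, iterating the identity $h^s_{zy} = f^{-n}\circ h^s_{f^n z,\, f^n y}\circ f^n$ and differentiating produces, in the limit $n\to\infty$, the cocycle expression defining $H^s_{qp'}$ in Proposition \ref{cocycle} — here uniform $u$-quasiconformality keeps the iterates under control and exponential contraction along $W^s$ drives $d(f^n q, f^n p')\to 0$. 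For the center factor, Proposition \ref{reli} directly identifies the derivative in $\Phi$ coordinates with $H^c_{pq}$. Composition yields $D_p h^{cs}_{xy} = H^{cs}_{pp'}$ in the $\Phi$ charts, Lebesgue-a.e.

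Finally, the assignment $(p,p')\mapsto H^{cs}_{pp'}$ is continuous (by Proposition \ref{cocycle} for $H^s$ and by continuity of the center holonomy for $H^c$), and the charts $\Phi$ depend continuously on the base point. Hence the a.e.-derivative of $h^{cs}_{xy}$ agrees with a continuous function, and Proposition \ref{l26} upgrades $h^{cs}_{xy}$ to a genuine $C^1$ map whose derivative is $H^{cs}$ everywhere. The main technical obstacle is the stable-factor limit: one must track how the base-point dependence of $\Phi$ interacts with the conjugation by $f^n$, and use uniform $u$-quasiconformality to keep the middle factor $D_{f^n q}h^s_{f^n z,\, f^n y}$ within a controllable distance of the linear identification $I_{f^n q,\, f^n p'}$. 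This is the standard invariance-principle mechanism behind Proposition \ref{cocycle}, and carrying it out for the composition (rather than just for the stable piece in isolation) is the key bookkeeping step.
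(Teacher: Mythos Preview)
The paper does not supply a proof of this lemma; it is quoted directly from \cite{bx18}. Your outline is essentially the argument there: quasiconformality of $\Phi_y^{-1}\circ h^{cs}_{xy}\circ\Phi_x$ gives a.e.\ differentiability, the derivative is identified as $H^{cs}$ via the factorization $h^{cs}=h^s\circ h^c$ and iteration, and Proposition~\ref{l26} upgrades to $C^1$.

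Two corrections. First, your PSW/Journ\'e justification for the stable factor is misplaced: \cite{psw97} concerns regularity of stable holonomy between \emph{center} leaves under center bunching, not between \emph{unstable} plaques, and fiber bunching of $Df|_{E^u}$ yields the linear cocycle holonomies $H^s,H^u$ of Proposition~\ref{cocycle}, not smoothness of $h^s|_{W^u}$. Fortunately this step is unnecessary --- Proposition~\ref{cac} already asserts that $\Phi_y^{-1}\circ h^{cs}_{xy}\circ\Phi_x$ (the full $cs$-holonomy, not a factor) is quasiconformal, which is all that is required for a.e.\ differentiability and for the final application of Proposition~\ref{l26}. The derivative of the stable factor is then identified exactly as you say, by iterating $h^s=f^{-n}\circ h^s\circ f^n$, passing to $\Phi$-coordinates (where $f$ becomes $Df|_{E^u}$), and using the affine structure of Proposition~\ref{l9} together with the equivariance $H^u_{p'y}\circ H^s_{pp'}\circ H^u_{xp}=H^s_{xy}$, in direct parallel to Lemma~\ref{l27}.

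Second, your appeal to Proposition~\ref{reli} for the center factor is contextually right but underspecified: in this paper Proposition~\ref{reli} is only established on the support of a Gibbs $cu$-state, whereas the lemma as stated carries no such hypothesis. This is because the paper has abbreviated \cite{bx18}'s statement; in \cite{bx18} the linearity of $h^c$ in the $\Phi$-charts is proved beforehand (under volume preservation) and is an \emph{input} to Lemma~28, not something derived within its proof. You should flag this dependency explicitly rather than treat it as available from the bare hypotheses.
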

Moreover, $H^{cs}$ is $\mathbb C$-linear since both $Dh^c$ and $H^s$ are $\mathbb C$-linear, and we have: 
\begin{proposition}
    $W^{cs}$ is a holomorphic foliation. 
\end{proposition}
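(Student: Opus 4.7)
The plan is to apply the holomorphic holonomy criterion, Corollary~\ref{hf}, taking the foliation we wish to prove holomorphic to be $W = W^{cs}$ and its transverse foliation to be $F = W^u$. Both have complex leaves by Lemma~\ref{leaf}, and they are mutually transverse by the dimension assumption, so it suffices to verify that every local $W^{cs}$-holonomy map
$$h^{cs}_{xy}\colon W^u_{\text{loc}}(x) \to W^u_{\text{loc}}(y)$$
is holomorphic.

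First I would invoke Lemma~\ref{l28} to conclude that $h^{cs}_{xy}$ is $C^1$, with pointwise derivative equal to $H^{cs} = H^s \circ H^c$. The next step is to argue that both $H^c$ and $H^s$ are $\mathbb{C}$-linear. For $H^c$: by Proposition~\ref{d1}, $h^c_{xz}$ is holomorphic; combined with the holomorphy of the normal form charts $\Phi_x, \Phi_z$ (Proposition~\ref{affine}) between one-dimensional complex vector spaces, the map $H^c_{xz} = \Phi_z^{-1} \circ h^c_{xz} \circ \Phi_x$ is simultaneously holomorphic and $\mathbb{R}$-linear (the latter from Proposition~\ref{reli}), hence $\mathbb{C}$-linear. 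For $H^s$: this is the stable holonomy of the cocycle $Df|_{E^u}$, and its $\mathbb{C}$-linearity follows immediately from $f$ being holomorphic, as already remarked after Proposition~\ref{cocycle}. Consequently the differential of $h^{cs}_{xy}$ is $\mathbb{C}$-linear at every point.

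Finally, a $C^1$ map between complex manifolds whose differential is $\mathbb{C}$-linear at every point satisfies the Cauchy--Riemann equations, hence is holomorphic. Thus each $h^{cs}_{xy}$ is holomorphic, and Corollary~\ref{hf} yields that $W^{cs}$ is a holomorphic foliation. No serious obstacle arises at this stage: all the heavy lifting has been done in the preceding sections (the dichotomy, the construction of Gibbs $cu$-states via heat flow, the $\mathbb{R}$-linearity of $H^c$, the elliptic-fibration argument upgrading $h^c$ to a holomorphic map, and the $C^1$-regularity statement of Lemma~\ref{l28}). The only remaining task is to package these ingredients together through the transverse holonomy criterion of Corollary~\ref{hf}.
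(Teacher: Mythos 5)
Your proposal is correct and follows essentially the same route as the paper: Lemma~\ref{l28} gives that $h^{cs}$ is $C^1$ with derivative $H^{cs}=H^s\circ H^c$, the $\mathbb{C}$-linearity of $H^s$ and $H^c$ (the latter from the holomorphicity of $h^c$ established in the preceding sections together with the normal form charts) makes $h^{cs}$ holomorphic, and Corollary~\ref{hf} then upgrades this to holomorphicity of the foliation $W^{cs}$. The only cosmetic difference is that you cite Propositions~\ref{d1} and~\ref{reli} (the isometric case) for the properties of $H^c$, whereas these should really be attributed to the combined statement at the start of Section~\ref{sec t2}, which covers both the isometric and contracting cases.
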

\begin{proof}
  Recall that the leaves of $W^{cs}$ are complex manifolds (Lemma \ref{leaf}). By Lemma \ref{l28}, $D h^{cs}=H^{cs}$ is $\mathbb C$-linear and $h^{cs}$ is holomorphic. By Proposition \ref{hf}, $W^{cs}$ is a holomorphic foliation. 
\end{proof}
By applying all arguments to $f^{-1}$, we are able to deduce that $W^{cu}$ is also a holomorphic foliation. As a Corollary: 

\begin{corollary}
    $W^c$ is a holomorphic foliation. In particular, $f$ is a holomorphic skew product over a linear automorphism on a complex $2$-torus.
\end{corollary}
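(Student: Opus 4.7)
The plan is to combine the holomorphicity of $W^{cs}$ with the dual statement for $W^{cu}$ (obtained by running the same argument for $f^{-1}$, as explicitly noted just before the corollary) in order to deduce holomorphicity of $W^c$, and then to appeal to Ghys's two-dimensional rigidity theorem for the quotient dynamics. Concretely, once $W^{cs}$ and $W^{cu}$ are known to be holomorphic foliations, both $E^{cs}$ and $E^{cu}$ are holomorphic subbundles of $TM$ of complex rank $2$. The partial hyperbolic splitting $E^s\oplus E^c\oplus E^u$ forces $E^{cs}+E^{cu}=TM$, so the intersection $E^c=E^{cs}\cap E^{cu}$ is a transverse intersection and, by Lemma~\ref{leaf}, is of complex rank $1$; hence it is a holomorphic line subbundle of $TM$. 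A holomorphic distribution of rank $1$ is automatically involutive, so the holomorphic Frobenius theorem (Theorem 2.26 in \cite{voi02}) produces a holomorphic foliation tangent to $E^c$, which by uniqueness of integral submanifolds must coincide with the foliation $W^c$ given by dynamical coherence (Proposition~\ref{coh}). Thus $W^c$ is holomorphic, and by the definition in Section~1, $f$ is a holomorphic skew product.

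For the second assertion, the fibered hypothesis gives that $\hat M = M/W^c$ is a topological manifold; since $W^c$ is now a holomorphic compact fibration, $\hat M$ inherits canonically the structure of a compact complex manifold of complex dimension $2$, with the projection $\pi:M\to\hat M$ a holomorphic submersion. The induced base map $\hat f:\hat M\to\hat M$ is then a holomorphic diffeomorphism, and because $D\pi$ carries the hyperbolic splitting $E^s\oplus E^u$ isomorphically onto a $D\hat f$-invariant splitting of $T\hat M$ with the same uniform contraction/expansion estimates, $\hat f$ is a holomorphic Anosov diffeomorphism of a compact complex surface. Ghys's theorem \cite{ghy95} (recalled in Section~\ref{subsec: Ghys}) then shows that $\hat f$ is holomorphically conjugate to a linear automorphism of a complex $2$-torus, completing the proof.

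The only mildly non-trivial step is the first one, namely verifying that the transverse intersection of two holomorphic subbundles is itself a holomorphic line subbundle and invoking holomorphic Frobenius; I do not expect any serious obstacle here, since transversality follows from the partially hyperbolic splitting and rank-one distributions are automatically involutive. Every other ingredient (dynamical coherence, the quotient construction for a holomorphic fibration, and Ghys's complex-surface rigidity) is recorded in the preliminaries and is used essentially as a black box.
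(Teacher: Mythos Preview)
Your proposal is correct and follows essentially the same route as the paper: intersect the holomorphic distributions $E^{cs}$ and $E^{cu}$ to obtain a holomorphic $E^c$, integrate via holomorphic Frobenius, and then apply Ghys's surface rigidity to the quotient $(\hat M,\hat f)$. The paper's proof is terser (it cites Section~\ref{journee} for the equivalence between holomorphic distributions and holomorphic foliations rather than spelling out the rank-one involutivity), but the logic is identical.
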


\begin{proof}
Since $W^{cs}$ and $W^{cu}$ are holomorphic foliations, $E^{cs}$ and $E^{cu}$ are holomorphic distributions (See Section \ref{journee}). Their intersection $E^c$ is also a holomorphic distribution, which implies that $W^c$ is a holomorphic foliation. Now, $f$ is a holomorphic skew product over a holomorphic Anosov system $(\hat M, \hat f)$. It follows from Theorem A in \cite{ghy95} that $\hat M$ is a complex torus and $\hat f$ is linear. 
\end{proof}

\section{Proof of Theorem \ref{thm: constru 5d}}\label{sec non holo ex}

The arguments in Section \ref{sec d1} essentially rely on the one-dimensional
assumption of $E^u$. In this section, we provide examples in which $\dim_\mathbb C E^s \geqslant 2$ and $\dim_\mathbb C E^u \geqslant 2$, the center foliation of a holomorphic partially hyperbolic diffeomorphism may fail to be holomorphic.

Let $X= C \times \mathbb T^{4n}$ be a product of a (real) closed surface $C$ and a torus $\mathbb T^{4n}$ of real dimension $4n$. For any linear automorphism $A\in \mathrm{Aut}(\mathbb T^n)$, let
$$f=f_A:=\mathrm{Id}_C\times A \times A \times A \times A:X \to X,$$
then $f$ is a fibered partially hyperbolic diffeomorphism of $X$.
\begin{proposition}
The manifold $X$ admits a complex structure satisfying:
  \begin{itemize}
    \item $X$ is not a holomorphic product of a Riemann surface and a complex torus;
    \item  $f$ is holomorphic, yet the center foliation $W^c$ fails to be holomorphic on any non-empty open subset.
  \end{itemize} 
\end{proposition}

\subsection{Blanchard-Calabi manifolds and fiberwise expanding maps}

Our construction of this complex structure is partially inspired by the construction of Blanchard-Calabi manifolds given by Catanese (Theorem 5.2 in \cite{cat02}), which admit holomorphic fiberwise expanding maps:

    By the classical theory of compact Riemann surfaces, there exists a (very ample) line bundle $L$ over $C$ such that there are two global sections $s_1, s_2 \in H^0 (L)$ without common zeros (see for example Chapter 2, Section 1, Page 215 in \cite{gh14}). Thus, $s \assign (s_1, s_2)$ is a nowhere-vanishing section of the vector bundle $W_0 : = L \oplus L$. 

Motivated by the canonical representation of the quaternion field in the complex matrix group, we define four sections:
$$\sigma_1=s=(s_1,s_2),~\sigma_2=(i s_1,-is_2),~\sigma_3=(-s_2,s_1),~\sigma_4=(is_2,is_1).$$ A straightforward calculation shows that these are four holomorphic sections of $W_0$ that are everywhere $\mathbb{R}$-linearly independent. Let $X_0$ be the quotient of $W_0$ by the fiberwise $\mathbb{Z}^4$-action given by translations $v \mapsto v + \sum_{k=1}^4 a_k \sigma_k$. As a real manifold, $X_0$ is diffeomorphic to $C \times \mathbb{T}^4$, but as a complex manifold it is not biholomorphic to any product of $C$ with a complex torus (using a similar argument as in the proof of Proposition \ref{prop non-holo}).

A crucial observation is that $X_0$ admits a holomorphic endomorphism that is fiberwise expanding. Under a local coordinate chart $U_1$, the point in the total space of $W$ can be written as $(z_0, \mathbf{z_1})$ where $z_0$ denotes the coordinate along $C$, and $\bf{z_1}$ denotes the coordinate along the fibers of $W_0$ ($\bf{z_1}$ is an vector in $\mathbb C^2$). Then $\phi: (z_0,\mathbf{z_1}) \mapsto (z_0,a \mathbf{z_1})$, where $a \in \mathbb{Z}$ is a locally well-defined holomorphic fiberwise expanding map for any $a>1$. Now, for any $U_2$ such that $U_1 \cap U_2 \neq \emptyset$, let $g_{12}:U_1 \cap U_2 \to GL(2,\mathbb{C})$ be the transition function. Then $g_{12}^{-1} \circ\phi \circ g_{12}$ is still of the form $(z_0,\mathbf{z_1}) \mapsto (z_0,a \mathbf{z_1})$. Therefore, $\phi$ is independent of the local trivialization and can be extended to a holomorphic fiberwise expanding map defined on the ambient $X_0$. Note that $\phi$ preserves the lattice given by $\sigma_k$, and thus $\phi$ defines a holomorphic fiberwise expanding map on $W_0$.

\subsection{Holomorphic partially hyperbolic systems on Blanchard-Calabi manifolds}
In the real setting, $S^1$ admits expanding maps. By taking $n$-copies of $S^1$, we are able to construct Anosov diffeomorphisms on $\mathbb{T}^n$. Similarly, by taking several copies of $W_0$, we are able to construct a holomorphic partially hyperbolic diffeomorphism. 

Let the bundle $W = \oplus_{l=1}^n W_0 $ be the $n$-copies of $W_0$, and let \red{$\sigma_{1,l} =
(0,\ldots,\sigma_1,\ldots,0)$, $\sigma_{2,l} = (0,\ldots,\sigma_2,\ldots,0)$, $\sigma_{3,l} = (0,\ldots,\sigma_3,\ldots,0)$, and $\sigma_{4,l} = (0,\ldots,\sigma_4,\ldots,0)$}, where the non-zero component is in the $l$-th component. Then the quotient $X$ of $W$ by the $\mathbb{Z}^{4 n}$-action acting
fiberwise by translations: $v \mapsto v + \sum_{k, l} a_{k, l}
\sigma_{k, l}$ is a complex manifold $C^\omega$-diffeomorphic to the real analytic manifold $C \times \mathbb T^{4n}$.

We then define the holomorphic partially hyperbolic diffeomorphism $f$. Under a local coordinate chart, the point in the total space of $W$ can be written as $(z_0, \bf z_1, \ldots, z_n)$ where $z_0$ denotes the coordinate along $C$, and $\bf z_l$ $(l=1, \ldots, n)$ denotes the coordinate of the $l$-th $W_0$-component in $W$. For any given hyperbolic matrix $A \in \mathrm{SL}(n,\mathbb Z)$, i.e., the eigenvalues of $A$ do not lie on the unit circle, let
\[ f_W :W \to W, \quad (z_0, \mathbf{z_1, \ldots, z_n}) \mapsto (z_0, A\cdot (\bf z_1, \ldots, z_n)^{\Tau}). \]
As mentioned in the previous example, the fiberwise expanding is well-defined and holomorphic. Taking into account that the map $t_{ij}$ which add the $i$-th coordinate to the $j$-th component is also holomorphic, $f$ is then a well-defined holomorphic diffeomorphism of $W$. Note that $f_W$ preserves the lattice generated by $\sigma_{k l}$, and $f_W$ induces a
holomorphic fibered partially hyperbolic diffeomorphism $f$ on $X$. 

$X$ is a holomorphic fibration over $C$ with fibers being complex tori. \red{By analyzing the subbundle structure of $W^\vee$ through its dual exact sequence and the Betti number constraints on Kähler manifolds, one can show that such $X$ is generally non-Kähler:}

\begin{proposition}[Remark 7.3 in \cite{cat04}]\label{cat kahler}
    The following statements are equivalent:
    \begin{itemize}
        \item $X$ is Kähler;
        \item $X = C \times T$ is a holomorphic product;
        \item $W$ is trivial. 
    \end{itemize}
\end{proposition}

From the viewpoint of real dynamics, $(f,X)$ is quite simple. The underlying real manifold of $X$ is a product manifold $X_\mathbb{R} = C \times \mathbb{T}^{4n}$, $f= \mathrm{Id}_C \times A \times A \times A \times A$ is a product map as a real diffeomorphism. The center holonomy $h^c$ is just the identification map between these real tori, which is real analytic. However, $(f,X)$ is more complicated as a holomorphic dynamic system. As we shall demonstrate in the following proposition, the fibers ($su$-leaves) are not holomorphically equivalent. Consequently, the center holonomy is not holomorphic, and $W^c$ is not a holomorphic foliation.

\begin{proposition}\label{prop non-holo}
 $X$ is non-Kähler and $W^c$ is not a holomorphic foliation. Moreover, $W^c$ is not holomorphic on any open set. 
\end{proposition}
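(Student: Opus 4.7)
The plan is to reduce all three assertions to showing that the holomorphic fibration $\pi\colon X\to C$ is not locally trivial at any $z_0\in C$. Because the $W^c$-leaves are complex submanifolds by Lemma~\ref{leaf} and $\pi$ restricted to each leaf is a holomorphic diffeomorphism onto $C$, a holomorphic foliation chart for $W^c$ over $\pi^{-1}(\hat U)$ would produce local holomorphic sections of $\pi$, yielding a holomorphic trivialization of $\pi|_{\hat U}$; conversely any such trivialization presents $W^c$ as the horizontal foliation of a product, hence holomorphically. Taking $\hat U=C$ recovers the statement about products.

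To rule out local triviality around a fixed $z_0\in C$, trivialize $W$ holomorphically on a small ball $V\ni z_0$ and write the lattice generators as $\sigma_{k,l}(z)=M_k s(z)$ embedded in the $l$-th $\mathbb{C}^2$-summand, where $M_1=I$, $M_2=\mathrm{diag}(i,-i)$, $M_3=\bigl(\begin{smallmatrix}0 & 1\\ -1 & 0\end{smallmatrix}\bigr)$, $M_4=M_2M_3$ is the standard quaternionic embedding of $\mathcal{H}$ into $GL(2,\mathbb{C})$. Assume for contradiction that $\pi|_V$ is holomorphically trivial; lifting to universal covers this is witnessed by a holomorphic $g(z)\in GL(2n,\mathbb{C})$ with $g(z)\Lambda_z=\Lambda_{z_0}$. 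Since $\Lambda_{z_0}$ is discrete and $g(z)\sigma_{k,l}(z)$ is holomorphic in $z$, each value $g(z)\sigma_{k,l}(z)=\eta_{k,l}$ is locally constant. Decomposing $g$ into $n\times n$ blocks of $2\times 2$ matrices $G_{l',l}(z)$, every block satisfies the independent system $G_{l',l}(z)M_k s(z)=\eta^{l'}_{k,l}$ for $k=1,2,3,4$.

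The core calculation is carried out on the open dense set $\{s_1 s_2\neq 0\}$. Set $w(z):=s_2(z)/s_1(z)$ and $S(z):=(s(z)\mid M_2 s(z))\in GL(2,\mathbb{C})$. In the generic rank-two case the $k=1,2$ equations determine $G_{l',l}(z)=(\eta^{l'}_{1,l}\mid\eta^{l'}_{2,l})S(z)^{-1}$, and substitution into the $k=3$ equation reduces the compatibility to
\[
    S(z)^{-1}M_3 s(z)=\tfrac{1}{2}\begin{pmatrix}w(z)-w(z)^{-1}\\[3pt]-i\bigl(w(z)+w(z)^{-1}\bigr)\end{pmatrix}=\mathrm{const},
\]
which forces $w(z)$ to be locally constant. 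The degenerate rank-one sub-cases (when $\eta^{l'}_{1,l}$ and $\eta^{l'}_{2,l}$ are $\mathbb{C}$-linearly dependent) yield an analogous constraint making $w+w^{-1}$ locally constant, hence again $w$ locally constant. If every block $G_{l',l}$ vanishes identically then $g\equiv 0$, contradicting invertibility, since $\{M_k s(z)\}_k$ are $\mathbb{R}$-linearly independent and hence $\mathbb{C}$-span $\mathbb{C}^2$. Because $s_1,s_2$ are $\mathbb{C}$-linearly independent sections of the positive bundle $L$ (they have no common zero yet each vanishes somewhere since $\deg L\ge 1$), the meromorphic $w=s_2/s_1$ is non-constant on the connected curve $C$, contradicting the derived local constancy.

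The main obstacle is the bookkeeping for the rank-one sub-cases of the block system; although they reduce to similar computations, each must be treated separately. An essential conceptual point is the necessity of using the non-central quaternionic $M_2=\mathrm{diag}(i,-i)$: with a scalar $M_2=i\cdot I$ one would have $\sigma_2=i\sigma_1$ and $\sigma_4=i\sigma_3$, so that $\Lambda_z=P(z)\mathbb{Z}[i]^2$ for $P(z)=(\sigma_1(z)\mid\sigma_3(z))$, and $P(z)^{-1}$ would trivialize the bundle with Gaussian-torus fiber; it is precisely the quaternionic non-commutativity that breaks this and makes the construction genuinely non-holomorphically-trivial.
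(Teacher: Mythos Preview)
Your lattice computation showing that $\pi:X\to C$ is nowhere locally holomorphically trivial is correct and is a genuinely different route from the paper's. However, the reduction at the start has a real gap. You assert that a holomorphic foliation chart for $W^c$ yields a holomorphic trivialization of $\pi$ over some $\hat U\subset C$, but a foliation chart only identifies a small open $\Omega\subset X$ with a product $\hat U\times V$, where $V$ is merely an open piece of one fiber, not the full torus $F_{z_0}=\pi^{-1}(z_0)$. To upgrade this to a trivialization of all of $\pi^{-1}(\hat U)$ you must know that the center holonomy $F_{z_0}\to F_z$ is holomorphic on the whole fiber, not just on $V$. The missing ingredient (used explicitly in the paper) is that in the real-analytic product $X_{\mathbb R}\simeq C\times\mathbb T^{4n}$ the center holonomy between fibers is the identity on $\mathbb T^{4n}$ and hence lifts to an $\mathbb R$-linear map of $\mathbb C^{2n}$; once its derivative is $\mathbb C$-linear at one point of $V$, it is $\mathbb C$-linear everywhere, so $h^c:F_{z_0}\to F_z$ is globally biholomorphic. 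With this supplied your reduction goes through. (Your converse claim---that a trivialization automatically presents $W^c$ as its horizontal foliation---is false as stated, since different trivializations have different horizontals; fortunately you never use that direction.)

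With the gap filled, the two arguments diverge after the $\mathbb R$-linearity step. The paper first shows $X\not\cong C\times T$ globally via $\deg(\det W)>0$ and Catanese's criterion, then uses the identity theorem for real-analytic functions to propagate any local product structure $\hat U\times T$ to a global biholomorphism $X\cong C\times T$, contradicting the first step. You instead bypass the analytic continuation entirely by proving the sharper statement that $\pi$ admits no holomorphic trivialization over any $\hat U$: the constancy of each $g(z)\sigma_{k,l}(z)$ forces $w=s_2/s_1$ to be locally constant, impossible since $\deg L>0$ makes each $s_i$ vanish somewhere (while they have no common zero), so $w$ is a non-constant meromorphic function on the connected curve $C$. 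Your route is more explicit and self-contained for the ``no open set'' assertion, at the cost of the block-by-block casework; the paper's is shorter once one accepts the cited input and the real-analyticity device.
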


\begin{proof}
By Proposition \ref{cat kahler}, $X$ is Kähler if and only if the bundle $W$ is trivial. It follows directly from our construction of $W$ that $W$ is non-trivial, and therefore, $X$ is not a holomorphic product. To be precise, since $L$ is a very ample line bundle, we have $\deg L>0$, and the determinant bundle of $W=\oplus_{l=1}^{2n} L$ has degree
$$\deg (\det W)= n \deg L > 0.$$
Consequently, $W$ is nontrivial and $X$ is non-Kähler. 


Assume that there is some $x \in M$ and a leafwise neighborhood $x \in U \subset W^c(x)$ such that $E^c$ is holomorphic on a neighborhood of $U$, then $h^c_{xy}|_{W^{su}_\mathrm{loc}(x)}: W^{su}_\mathrm{loc}(x) \to W^{su}_\mathrm{loc}(y)$ is holomorphic for any $y \in U$. Notice that $h^c_{xy}$ is a $\mathbb R$-linear diffeomorphism between complex tori $W^{su}(x), W^{su}(y)$.  
Therefore $h^c_{xy}:W^{su}(x) \to W^{su}(y)$ is holomorphic on the ambient $W^{su}(x)$ since a $\mathbb R$-linear diffeomorphism between complex tori is holomorphic if its derivative is $\mathbb C$-linear at a point. In particular, $W^{su}(x)$ and $W^{su}(y)$ are holomorphically equivalent and $X|_{W^{su}(U)}$ is holomorphically equivalent to $U \times W^{su}(x)$.

Let $X' = C \times W^{su}(x)$ be a holomorphic product; the underlying real manifold $X'_\mathbb R$ is also $C \times \mathbb T^{4n}$. We denote $J': T X'_\mathbb R \to TX_\mathbb R$ to be the complex structure on $X'$. Let $\tau: X_\mathbb R \to X'_\mathbb R$ be the identity map between the underlying real manifold of $X$ and $X'$, and $\tau$ is real analytic. Since $J$ and $J'$ are real analytic tensor fields, $\frac{\partial \tau}{\partial \bar z_i}$ is a real analytic function on $X$ for any $i=1,2,\ldots,2n+1$. By our construction, $\tau:X \to X'$ is holomorphic on the open set $U \times W^{su}(x)$, i.e., $\frac{\partial \tau}{\partial \bar z_i}=0$ on $U \times W^{su}(x)$. By the identity theorem for real analytic functions (a real analytic function on a connected domain vanishes if and only if it vanishes on a non-empty open subset of this domain), we see that $\frac{\partial \tau}{\partial \bar z_i}=0$ and $\tau$ is holomorphic on the ambient manifold $X$. In particular, $X$ is holomorphically equivalent to $X'$, which is a contradiction. 
\end{proof}
\begin{proof}[Proof of Theorem \ref{thm: constru 5d}] The proof above provide odd-dimensional examples ($n\geq 5$) that satisfy Theorem \ref{thm: constru 5d}. By taking product with translations on elliptic curves we get even-dimensional examples that satisfy Theorem \ref{thm: constru 5d}. \end{proof}

\section{Several examples}\label{sec exa}
In this section, we give examples to illustrate the accessibility classes for holomorphic fibered partially hyperbolic diffeomorphisms could be complicated. 

We say that a point $y \in M$ is $su$-accessible from $x \in M$ if there exists an $su$-path, i.e., a path $\gamma : [0, 1] \rightarrow M$ piecewise contained in the leaves of the strong stable and strong unstable foliations, joining $x$and $y$. This defines an equivalence relation on $M$. We denote by $$\mathrm{Acc} (x)
= \{ y \in M : y \text{ is  $su$-accessible from } x \}$$ the \textit{accessibility class} of $x$. If $\mathrm{Acc}(x) = M$, then $f$ is called \textit{accessible}. 
\begin{example}[Accessible case: Iwasawa manifolds]
    Let $\mathfrak{g}_3(\mathbb{C})$ be the Heisenberg algebra of dimension 3 over $\mathbb{C}$, i.e., the non-trivial Lie brackets are given by $[e_1, e_2] = e_3$ and $\mathfrak{g}_3(\mathbb{C})$ admits a complex partially hyperbolic automorphism
    \[ f : e_1 \mapsto 2 e_1 + e_2, \quad e_2 \mapsto e_1 + e_2, \quad e_3 \mapsto e_3. \]
    Moreover, $f$ preserves the lattice $\gamma = \mathrm{span}_\mathbb{Z}\{e_1,...,e_6\} \subset \mathfrak{g}_3(\mathbb{C})$. Let $F$ be the lift of $f$ on the complex Heisenberg group $H_3(\mathbb{C}) = \mathrm{Lie}(\mathfrak{g}_3(\mathbb{C}))$ via the exponential map. $F$ is then a holomorphic partially hyperbolic automorphism of $H_3(\mathbb{C})$ preserving the lattice $\Gamma := \exp(\gamma)$. $F$ also induces a holomorphic partially hyperbolic diffeomorphism on the complex Heisenberg manifold (Iwasawa manifold) $H_3(\mathbb{C}) / \Gamma$. It follows from the construction that $[E^{s}, E^{u}] = E^c$, and $F$ is accessible. 
\end{example}

Historically, a complex nilmanifold meant a quotient of a complex nilpotent Lie group over a cocompact lattice. An example of such a nilmanifold is an Iwasawa manifold as in the previous example. From the 1980s, another (more general) notion of a complex nilmanifold replaced this one. 

An almost complex structure on a real Lie algebra is an endomorphism $J:\mathfrak{g} \to \mathfrak{g}$ which squares to $-\mathrm{Id}_{\mathfrak{g}}$.  $J$ defines a left-invariant almost complex structure on the corresponding Lie group $G$ by multiplication. Such a manifold $(G,J)$ is a complex manifold if the Nijenhuis tensor vanishes (see Proposition \ref{nn}). A complex nilmanifold is a quotient of a complex group manifold $(G,J)$, by a discrete cocompact lattice, acting from the right.

This construction indeed provides more interesting examples. We will construct holomorphic partially hyperbolic systems on such complex nilmanifold, where the accessibility classes are not complex submanifolds:

\begin{example}[Proof of Theorem \ref{thm: 5d acc class}]\label{5dacc}
Let $\mathfrak{g} = \mathfrak{g}_5(\mathbb{R}) \oplus I$ be the direct sum of the real Heisenberg algebra of dimension 5 and a 1-dimensional abelian ideal, i.e., the non-trivial Lie brackets are given by $[e_1, e_2] =[e_3, e_4] = e_5$. $\mathfrak{g}$ admits a partially hyperbolic automorphism
\[ f : e_1 \mapsto 2 e_1 + e_3, \quad e_2 \mapsto 2e_2 + e_4, \quad e_3 \mapsto e_1 + e_3, \quad e_4 \mapsto e_2 + e_4, \quad e_5 \rightarrow e_5, \quad e_6 \mapsto e_6. \]
$f$ can be lifted to a partially hyperbolic automorphism $F$ on $G = H_5(\mathbb{R}) \times S^1$ via the exponential map. Since $f$ preserves the lattice $\gamma = \mathrm{span}_\mathbb{Z}\{e_1,...,e_6\} \subset \mathfrak{g}$, $F$ induces a partially hyperbolic diffeomorphism on $G/ \Gamma$, where $\Gamma = \exp(\gamma)$. In particular, $F$ is the identity along $S^1$, and the accessibility classes are diffeomorphic to $H_5(\mathbb{R})$.

We then endow $G/ \Gamma$ with a complex structure. Let $J: \mathfrak{g} \to \mathfrak{g}$ be an almost complex structure given by
\[ J e_1 = e_2, \quad J e_3 = e_4, \quad J e_5 = e_6. \]
It is straightforward to check that the Nijenhuis tensor $N_{J}$ vanishes. Therefore $J$ gives left-invariant complex structures on $G$ and $G/ \Gamma$. It is also a direct check that $f$ commutes with $J$. Consequently, the lift $F$ gives holomorphic partially hyperbolic diffeomorphisms on $(G, J)$ and $(G/ \Gamma,J)$. As mentioned in the previous paragraph, the accessibility classes of $F$ are of real dimension 5 and cannot be a complex submanifold.
\end{example}

We end this section with a holomorphic partially hyperbolic system with a compact center that is not fibered. 
\begin{example}
    Let $E = \mathbb{C}/ \langle 1, \tau \rangle$ be an elliptic curve and $M = E \times E \times E$. Then $A = \begin{pmatrix}2&1&0\\1&1&0\\0&0&1\end{pmatrix} \in \mathrm{GL}(3,\mathbb{C})$ is a holomorphic partially hyperbolic diffeomorphism on $M$. Let $\sigma : M \to M, (z_1,z_2,z_3) \to (-z_1, -z_2, z_3 + \frac{1}{2})$ be a holomorphic periodic automorphism of $M$. It is straightforward to check that $\sigma \circ A = A \circ \sigma$. Therefore, $A$ induces a holomorphic partially hyperbolic diffeomorphism $A_\sigma$ on $M_\sigma = M/ \sigma$. $(M_\sigma, f_\sigma)$ is then a holomorphic partially hyperbolic diffeomorphism with compact center. All but finite center fibers are biholomorphic to $E$ except for 16 singular fibers biholomorphic to $\mathbb{C}/ \langle \frac{1}{2}, \tau \rangle$. In particular, the quotient space $M_\sigma / W^c$ is an orbifold, and $f_\sigma$ is not fibered. 
\end{example}

\appendix
\section{Proof of Lemma \ref{liag}}\label{sec liag}
Without loss of generality, we assume that $d(\cdot, \cdot)$ is induced by the metric of constant curvature. Let $A_n=U_n D_n V_n$ be the KAK decomposition of $A_n$, where $D_n$ is a diagonal matrix, $U_n$, and $V_n$ are unitary matrices. Then we could take a subsequence $n_k$ such that $U_{n_k}$ and $V_{n_k}$ converge to $U_0$ and $V_0$ respectively. As a consequence, $\lim_{k\to \infty} A_{n_k}/\|A_{n_k}\|=U_0\cdot \begin{pmatrix}
    1&0\\0&0
\end{pmatrix}\cdot V_0:=A_0$. Let $b:=V_0^{-1}\cdot \begin{pmatrix}
    0\\1
\end{pmatrix}$ be the kernel direction of $A_0$. Then for any $x_1, x_2 \in \mathbb P^1 \backslash \{b\}$, we know that for $k$ sufficiently large, $x_1$ and $x_2$ are uniformly bounded away from $V_{n_k}^{-1}\cdot \begin{pmatrix}
    0\\1
\end{pmatrix}$; hence, for $k$ sufficiently large (and therefore $\|A_{n_k}\|$ sufficiently large), $A_{n_k}\cdot x_1$ and $A_{n_k}\cdot x_2$ are sufficiently close to $U_{n_k}\cdot \begin{pmatrix}
  1\\0  
\end{pmatrix}$. Both of them will converge to $U_{0}\cdot \begin{pmatrix}
  1\\0  
\end{pmatrix}$, i.e., the image direction of $A_0$. Therefore,
     \[ \lim_{k \rightarrow \infty} d (A_{n_k} x_1, A_{n_k} x_2) = 0 \]
for any $x_1, x_2 \in \mathbb P^1 \backslash \{b\}$. 

\section{Proof of Proposition \ref{reli} }\label{proof reli}
We consider the space 
\[ 
\mathcal{E} = \{ (x, y) \in M^2 : x \in M, y \in W^c (x) \},
\]
and we define the dynamics \( f_{\mathcal{E}} : \mathcal{E} \to \mathcal{E}, \quad f_{\mathcal{E}}(x, y) = (f(x), f(y)) \).

\begin{lemma}[Proposition 20 in \cite{bx18}]
The space $\mathcal{E}$ is a continuous fiber bundle over $M$ with compact fibers. Moreover $f_{\mathcal{E}}$ preserves an invariant probability measure $\nu_{\mathcal{E}}$ on $\mathcal{E}$ such that for any continuous function $\varphi$,
  $$\int_{\mathcal E}\varphi d\nu_{\mathcal{E}}=\int_M\int_{W^c(x)}\varphi(x,y)d\nu_x^c(y)d\nu(x).$$
\end{lemma}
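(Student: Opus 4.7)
\medskip
\noindent\textbf{Proposal.} The plan is to realize $\mathcal{E}$ as a fibered product, import the compact-foliation disintegration of $\nu$ along $W^c$, and then promote the equivariance of these conditional measures under $f$ to the invariance of the integrated measure $\nu_{\mathcal{E}}$.

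For the fiber bundle claim, I would observe that $\mathcal{E}$ coincides with the fibered product $M\times_{\hat M} M=\{(x,y)\in M^2:\hat x=\hat y\}$. Since $f$ is fibered, the projection $\pi:M\to\hat M=M/W^c$ is, by definition, a continuous fiber bundle whose fibers are the compact center leaves $W^c(\hat x)$. The fibered product is the pullback of $\pi$ along $\pi$, so projection on the first factor gives a continuous fiber bundle $\mathcal{E}\to M$ whose fiber over $x$ is exactly $W^c(x)$. Compactness of the fibers is inherited from compactness of center leaves, and $f_{\mathcal{E}}=f\times f$ preserves $\mathcal{E}$ because $f$ permutes the center leaves.

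For the measure, I would first note that because every center leaf of $f$ is compact, the proposition on disintegration for compact-leaf foliations (quoted just after Rokhlin's theorem in the Preliminaries) applies: the Gibbs $cu$-state $\nu$ admits an essentially unique system of conditional measures $\{\nu^c_x\}_{x\in M}$, with $\nu^c_x$ a probability measure supported on $W^c(x)$ and $x\mapsto \int \psi\,d\nu^c_x$ measurable for each continuous $\psi:M\to\mathbb R$. For $\varphi\in C(\mathcal{E})$, the map $x\mapsto\int_{W^c(x)}\varphi(x,y)\,d\nu^c_x(y)$ is then measurable (approximate $\varphi$ uniformly by finite sums $\sum_i g_i(x)h_i(y)$ in suitable trivializing charts of $\mathcal{E}\to M$ and use the measurability statement above). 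Setting
\[
\Lambda(\varphi):=\int_M\!\!\int_{W^c(x)}\varphi(x,y)\,d\nu^c_x(y)\,d\nu(x)
\]
therefore defines a positive linear functional on $C(\mathcal{E})$ with $\Lambda(1)=\int_M 1\,d\nu=1$, so by the Riesz representation theorem it is given by a unique Borel probability measure $\nu_{\mathcal{E}}$ on $\mathcal{E}$ satisfying the stated integral formula.

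Finally, for $f_{\mathcal{E}}$-invariance, the key ingredient is the transformation rule $(f|_{W^c(x)})_\ast\nu^c_x=\nu^c_{f(x)}$ for $\nu$-a.e.\ $x$. This is the standard fact that pushing forward an essentially unique disintegration of an invariant measure by an invariant map yields a disintegration of the pushforward; combined with $f_\ast\nu=\nu$ and essential uniqueness it forces the displayed identity. Using it and then substituting $x'=f(x)$ in the outer integral,
\[
\int_{\mathcal{E}}\!\varphi\circ f_{\mathcal{E}}\,d\nu_{\mathcal{E}}
=\int_M\!\!\int_{W^c(x)}\!\varphi(f(x),f(y))\,d\nu^c_x(y)\,d\nu(x)
=\int_M\!\!\int_{W^c(x')}\!\varphi(x',z)\,d\nu^c_{x'}(z)\,d\nu(x'),
\]
which is $\int\varphi\,d\nu_{\mathcal{E}}$. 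The only delicate point I foresee is a careful justification of the measurability of the inner integral in $x$; once that is handled, Riesz representation plus the transformation rule for disintegrations makes both conclusions routine. The compactness of center leaves (hence of fibers of $\mathcal{E}\to M$) is what makes this cleaner than the analogous construction for non-compact center foliations.
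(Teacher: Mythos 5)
Your proposal is correct. Note, however, that the paper offers no argument of its own for this lemma: it is quoted verbatim as Proposition 20 of \cite{bx18}, where it is proved for the invariant volume in the volume-preserving setting, and here it is transplanted to the Gibbs $cu$-state $\nu$ without comment. Your write-up supplies exactly the verification that this transplantation needs, and it uses only what is actually available in the present setting: the fibered hypothesis (so $\pi:M\to\hat M$ is a continuous fiber bundle with compact center-leaf fibers, making $\mathcal{E}=M\times_{\hat M}M$ a pullback bundle over $M$), the compact-leaf disintegration of $\nu$ quoted in the Preliminaries, the Riesz representation theorem, and the essential uniqueness of disintegrations to get $(f|_{W^c(x)})_{\ast}\nu^c_x=\nu^c_{f(x)}$ for $\nu$-a.e.\ $x$, which together with $f_{\ast}\nu=\nu$ gives $f_{\mathcal{E}}$-invariance by the change of variables you wrote. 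The only point you flag as delicate, measurability of $x\mapsto\int_{W^c(x)}\varphi(x,y)\,d\nu^c_x(y)$, can be settled slightly more cleanly than with trivializing charts: since $\mathcal{E}$ is a compact subset of $M\times M$, Stone--Weierstrass lets you uniformly approximate $\varphi$ on $\mathcal{E}$ by finite sums $\sum_i g_i(x)h_i(y)$ with $g_i,h_i\in C(M)$, and for each such product the map $x\mapsto g_i(x)\int h_i\,d\nu^c_x$ is measurable by the defining property of the disintegration; uniform limits then preserve both measurability and the integral identity. So your route is a correct, self-contained proof of what the paper handles by citation, and it makes explicit that the argument depends only on $\nu$ being an invariant probability measure with compact center leaves, not on any absolute continuity of $\nu$.
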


\begin{lemma}
  \label{dif}
  Let
  \[ Q \assign \{ x \in M : \text{for }\nu_x^c \text{-a.e. } y \in W^c (x), h^{c}_{xy} : W^u_{\mathrm{loc}} (x) \rightarrow W^u_{\mathrm{loc}} (y) \text{ is differentiable at } x \} . \]
  Then $\nu (Q) = 1$.
\end{lemma}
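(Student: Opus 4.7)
\begin{proof*}{Plan for Lemma \ref{dif}}
The plan is a standard Fubini argument built on two facts: (i) the local disintegration of $\nu$ on a $cu$-plaque has a product structure whose factors are equivalent to Lebesgue measures (Lemma \ref{product} and Corollary \ref{coro: pi u equiv}); and (ii) inside a single $cu$-plaque, the center holonomy between two fixed unstable leaves is a single well-defined quasiconformal map, hence differentiable at Lebesgue-a.e.\ point by Proposition \ref{cac} and Proposition \ref{l26}.

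First I would fix a $cu$-plaque $L = L^{cu}(x_0)$ and identify it with $C \times U := W^c(x_0) \times W^u_{\mathrm{loc}}(x_0)$ via the homeomorphism $\Psi:(c, u_0) \mapsto h^c_{x_0, c}(u_0)$, the point lying in $W^u(c) \cap W^c(u_0)$. Under $\Psi$ the unstable leaves in $L$ are the horizontal slices $\{c\}\times U$ and the center fibers are the vertical slices $C \times \{u_0\}$. The key observation is that for any pair $p = \Psi(c_1, u_0)$, $q = \Psi(c_2, u_0)$ in the same center fiber, $h^c_{pq}$ coincides (as a germ at $p$) with a single map $\phi_{c_1, c_2}: W^u(c_1) \cap L \to W^u(c_2) \cap L$ depending only on $c_1$ and $c_2$ and not on $u_0$, because dynamical coherence makes $W^c|_L$ a fibration of $L$ and its holonomy between two unstable slices is coherently defined.

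By Proposition \ref{cac} and Proposition \ref{l26}, $\phi_{c_1, c_2}$ is quasiconformal and thus differentiable at Lebesgue-a.e.\ point; denote by $N(c_1, c_2) \subset W^u(c_1)$ its (Lebesgue-null) set of non-differentiability. The parametrization $u_0 \mapsto \Psi(c_1, u_0)$ is itself a center holonomy $W^u(x_0) \to W^u(c_1)$, hence absolutely continuous, so $\Psi(c_1, \cdot)^{-1}(N(c_1, c_2)) \subset U$ is Lebesgue-null for every fixed pair $(c_1, c_2)$. Define the joint bad set
\[
N_L = \{(p, q) \in L \times L : q \in W^c(p),\ h^c_{pq} \text{ not differentiable at } p\}.
\]
In $(c_1, c_2, u_0)$-coordinates, $N_L$ becomes $\{(c_1, c_2, u_0) : u_0 \in \Psi(c_1, \cdot)^{-1}(N(c_1, c_2))\}$, with null fibers in the $u_0$-direction. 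Fubini with respect to $\mathrm{vol}_C \otimes \mathrm{vol}_C \otimes \Pi^u_{\ast} \mu^{cu}_{x_0}$ (a product of measures equivalent to Lebesgue) gives zero measure, and a second Fubini yields that for $\nu^{cu}_{x_0}$-a.e.\ $p \in L$ the slice $\{q \in W^c(p) : (p, q) \in N_L\}$ has $\nu^c_p$-measure zero, i.e., $p \in Q$. Since $L$ was arbitrary and $\nu$ disintegrates locally into $\nu^{cu}$ on $cu$-plaques, this proves $\nu(Q) = 1$.

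The main obstacle is the bookkeeping around the fact that $\Psi$ is only a $C^0$ homeomorphism: Lebesgue null sets inside individual unstable leaves must pull back to null sets in $U$ under the center holonomies that build $\Psi$, and these in turn must interact correctly with the product structure of $\nu^{cu}_{x_0}$. All of this is routine once Proposition \ref{cac} (absolute continuity of center holonomy between unstable leaves, equivalence of the relevant disintegrations to leafwise Lebesgue) is in hand.
\end{proof*}
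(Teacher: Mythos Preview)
Your proposal is correct and follows essentially the same Fubini strategy as the paper: both use that center holonomy between unstable leaves is quasiconformal (hence differentiable Lebesgue-a.e.\ by Proposition~\ref{cac}), together with the two-way disintegration of the Gibbs $cu$-state on a $cu$-plaque (Proposition~\ref{fub}), to swap the order of integration and conclude that $Q$ has full measure. The only cosmetic difference is that the paper fixes a single base unstable leaf $W^u_{\mathrm{loc}}(x)$ and works with the one-parameter set $T_x$, while you parametrize the plaque explicitly by $C\times U$ and run a three-variable Fubini over all pairs $(c_1,c_2)$; your version makes the source/target symmetry of $h^c_{pq}$ more transparent but is otherwise the same argument.
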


\begin{proof}
  For $x \in M$, let $\nu^u_x$ denote the conditional measure of $\nu$ on
  $W^u_{\mathrm{loc}} (x)$, and let $\nu^{cu}_x$ be the conditional measure of
  $\nu$ on the subset $L_{\mathrm{loc}}^{cu}(x):=W^c(W^u_{\mathrm{loc}} (x))$. Since $\nu$ is a Gibbs $cu$-state, by Fubini's theorem (Proposition \ref{fub}), $\nu^{cu}_x$ decomposes as conditional measures in two different ways:
  \[ \nu^{cu}_x \asymp \int_{W^c (x)} \nu^u_y d \nu^c_x (y) \asymp
     \int_{W^u_{\mathrm{loc}} (x)} \nu^c_y d \nu^u_x (y). \]
 By Proposition \ref{cac}, for every $y \in L_{\mathrm{loc}}^{cu}(x)$, the center holonomy $h^{c}_{yx} : W^u_{\mathrm{loc}} (y) \rightarrow W^u_{\mathrm{loc}} (x)$ is differentiable at $\nu^u_y$-a.e. $z \in W^u_{\mathrm{loc}} (y)$. Thus, if we set
  \[ T_x = \{ z \in L_{\mathrm{loc}}^{cu}(x) : h^{c}_{y x} \text{ is differentiable at } z \text{ for } y = W^c (x) \cap W^u_{\mathrm{loc}} (z) \}, \]
  then by the first expression for $\nu^{cu}_x$, we have $\nu^{cu}_x (T_x) = \nu^{cu}_x (L_{\mathrm{loc}}^{cu}(x))$ for $\nu$-almost $x \in M$. Since $T_x$ has full $\nu^{cu}_x$ measure in $L_{\mathrm{loc}}^{cu}(x)$, we conclude by the second expression for $\nu^{cu}_x$ that $\nu^c_y (T_x \cap W^c (y)) = \nu^c_y (W^c (y))$ for $\nu^u_x$-almost $y \in W^u_{\mathrm{loc}} (x)$. This immediately implies that $\nu^u_x (Q \cap W^u_{\mathrm{loc}}  (x)) = \nu^u_x (W^u_{\mathrm{loc}}  (x))$ from the definition of $Q$. Since $\nu$ is absolutely continuous along $W^u$, and this holds for $\nu$-almost $x \in M$, we conclude that $\nu (Q) = 1$.
\end{proof}

We then define $\mathcal{Q}= \{ (x, y) \in \mathcal{E}: x \in Q \}$. From the definition of $Q$ and Lemma \ref{dif}, we see that $\mathcal{Q}$ has full $\nu_{\mathcal{E}}$-measure inside $\mathcal{E}$. For $(x, y) \in \mathcal{Q}$, we can define $H^{c}_{xy} : E^u_x \rightarrow E^u_y$ to be the derivative of $h^{c}_{xy}$ at $x$. The map $(x, y) \rightarrow H^{c}_{xy}$ is clearly measurable and defined $\nu_{\mathcal{E}}$-a.e. Our next goal is to show that the maps $H^{c}$ are equivariant with respect to the unstable holonomy $H^u$ of $Df|_{E^u}$. Before going into the proof, we need the following measure-theoretic lemma.

\begin{lemma}[Lemma 21 in \cite{bx18}]
  \label{l21}
  Let $T$ be a measure-preserving transformation of a finite measure space $(X, \mu)$, and let $\{ K_n \}_{n \geqslant 1}$ be a sequence of measurable subsets of $X$ with $\sum_{n = 1}^{\infty} \mu (X \backslash K_n) < + \infty$. Then there is a full measure subset $\Omega \subset X$ with the property that if $x, y \in \Omega$, then there is an $n \in \mathbb{N}$ and a sequence $n_k \rightarrow \infty$ with $T^{n_k} (x) \in K_n, T^{n_k} (y) \in K_n$ for each $n_k$.
\end{lemma}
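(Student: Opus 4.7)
The plan is to reduce the statement to a density estimate via Birkhoff's pointwise ergodic theorem. After normalizing so that $\mu$ is a probability measure, I would apply Birkhoff to each indicator $\mathbf{1}_{K_n}$, producing for $\mu$-a.e.\ $x$ the limit
$$
g_n(x) := \lim_{N \to \infty} \frac{1}{N}\sum_{j=0}^{N-1} \mathbf{1}_{K_n}(T^j x) = \mathbb{E}[\mathbf{1}_{K_n} \mid \mathcal{I}_T](x),
$$
where $\mathcal{I}_T$ denotes the $\sigma$-algebra of $T$-invariant measurable sets. Since $\int (1 - g_n)\, d\mu = \mu(X \setminus K_n)$, the hypothesis $\sum_n \mu(X \setminus K_n) < \infty$ together with monotone convergence (applied to $\sum_n (1 - g_n) \ge 0$) yields $\sum_n (1 - g_n(x)) < \infty$ for $\mu$-a.e.\ $x$; in particular $g_n(x) \to 1$ on a set of full measure.

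I would then take $\Omega$ to be the intersection of two full measure sets: the countable intersection on which Birkhoff's theorem applies simultaneously to every $\mathbf{1}_{K_n}$, and the set on which $g_n(x) \to 1$. This $\Omega$ is of full measure.

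For the conclusion, given $x, y \in \Omega$, I would choose $n$ large enough that $g_n(x) + g_n(y) > 1$, which is possible because both densities tend to $1$ on $\Omega$. Writing $S_n(x) := \{j \geq 0 : T^j x \in K_n\}$ and similarly $S_n(y)$, the elementary pigeonhole bound
$$
|S_n(x) \cap S_n(y) \cap [0,N)| \ge |S_n(x) \cap [0,N)| + |S_n(y) \cap [0,N)| - N,
$$
combined with the Birkhoff averages, yields lower density $\ge g_n(x) + g_n(y) - 1 > 0$ for the simultaneous visit set. Hence $S_n(x) \cap S_n(y)$ is infinite, and any enumeration provides the desired subsequence $n_k \to \infty$ with $T^{n_k}(x), T^{n_k}(y) \in K_n$.

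The main obstacle is ensuring that a \emph{single} index $n$ works for both $x$ and $y$ simultaneously, rather than allowing separate $n_x$ and $n_y$. This is precisely where the strong pointwise convergence $g_n(x) \to 1$ on $\Omega$ is essential: a direct Borel--Cantelli on $\{x : T^j x \in K_n \text{ i.o.}\}$ only tells us each orbit eventually hits $K_n$ infinitely often, with no coordination between the orbits of $x$ and $y$. The summability hypothesis is used exactly to upgrade this to the quantitative statement $g_n \to 1$ a.e., which then forces the two visit-time sets to have densities whose sum exceeds $1$ and thus intersect in an infinite set.
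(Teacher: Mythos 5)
Your argument is correct and is essentially the same proof as the one this paper defers to: the paper only cites Lemma 21 of \cite{bx18}, and the proof there is exactly this Birkhoff-average argument, using summability to get the a.e.\ convergence of the visit densities to $1$ and then choosing a single $n$ for which the two densities sum to more than $1$, forcing infinitely many simultaneous visit times. No gaps; the normalization to a probability measure and the monotone convergence step are handled properly.
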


\begin{lemma}
  \label{l24}
  There is a full $\nu_{\mathcal{E}}$-measure subset $\mathcal{Q}'
  \subset \mathcal{Q}$ such that if $(x, y), (z, w) \in \mathcal{Q}'$ with $z \in W^u_{\mathrm{loc}} (x)$ and
  $w = W^u_{\mathrm{loc}} (y)$, then
  \begin{equation}
    H^{c}_{z w} \circ H^u_{x z} = H^u_{y w} \circ H^{c}_{x y}
    \label{5.2}
  \end{equation}
\end{lemma}

\begin{proof}
  By Lusin's theorem, we can find an increasing sequence of compact subsets $K \subset \mathcal{Q}$ such that $\nu_{\mathcal{E}} (\mathcal{E} \backslash K_n) < 2^{-n}$ and such that $H^{c}$ restricts to a uniformly continuous function on each $K_n$. Since $\nu_{\mathcal{E}}$ is $f_{\mathcal{E}}$-invariant, by applying Lemma \ref{l21} to $f_{\mathcal{E}}^{-1}$, there is a measurable set $\mathcal{Q}'\subset \mathcal{Q} $ with $\nu_\mathcal{E} (\mathcal{E} \backslash \mathcal{Q}') = 0$ and such that for any pair of points $(x, y), (z, w) \in \mathcal{Q}'$, there is an $n \in \mathbb{N}$ and a pair of infinite sequences $n_k \rightarrow \infty$ and $n_k' \rightarrow \infty$ with $f_{\mathcal{E}}^{-n'_k} (x, y), f_{\mathcal{E}}^{-n'_k} (z, w) \in K_n$.
  
Let $(x, y), (z, w) \in \mathcal{Q}'$ be such that $z \in W^u_{\mathrm{loc}} (x)$ and $w \in W^u_{\mathrm{loc}} (y)$. Since $H^c$ is uniformly continuous on $K_n$ and $d (f^{-n} x, f^{-n} z), d (f^{-n} y, f^{-n} w) \rightarrow 0$ as $n \rightarrow + \infty$, we conclude that
\[ H^u_{f^{-n_k} y f^{-n_k} w} \circ H^{c}_{f^{-n_k} x f^{-n_k} y} \circ \left( H^{c}_{f^{-n_k} z f^{-n_k} w} \circ H^u_{f^{-n_k} x f^{-n_k} z} \right)^{- 1} \rightarrow \mathrm{Id} \]
as $k \rightarrow + \infty$, where $\{ n_k \}$ is the infinite sequence from the previous paragraph corresponding to the pair $(x, y), (z, w)$.

  From differentiating the equation
\[ h^{c}_{x y} = f^{n_k} \circ h^{c}_{f^{-n_k} x f^{-n_k} y} \circ f^{-n_k}, \]
we obtain the equation
\[ H^{c}_{x y} = D f_{f^{-n_k }y}^{n_k} |_{E^u} \circ H^{c}_{f^{-n_k} x f^{-n_k} y} \circ D f_x^{-n_k} |_{E^u} . \]
By Proposition \ref{cocycle} we have
\[ H^u_{y w} = D f_{f^{-n_k}w}^{n_k} |_{E^u} \circ H^u_{f^{-n_k} y f^{-n_k} w} \circ D f_y^{-n_k} |_{E^u}. \]
Therefore
\[ H^u_{y w} \circ H^{c}_{x y} = D f_{f^{-n_k}w}^{n_k} |_{E^u} \circ H^u_{f^{-n_k} y f^{-n_k} w} \circ H^{c}_{f^{-n_k} x f^{-n_k} y} \circ D f_x^{-n_k} |_{E^u} . \]
Similarly,
\[ H^{c}_{z w} \circ H^u_{x z} = D f_{f^{-n_k}w}^{n_k} |_{E^u} \circ H^{c}_{f^{-n_k} z f^{-n_k} w} \circ H^u_{f^{-n_k} x f^{-n_k} z} \circ D f_x^{-n_k} |_{E^u} . \]
 Note that $D f^{n_k} |_{E^u}$ acts on $E^u$ by multiplication of a complex number. We have
 \begin{eqnarray*}
    &  & (H^u_{y w} \circ H^{c}_{x y}) \circ (H^{c}_{z w} \circ H^u_{x
    z})^{- 1}\\
    & = & D f_{f^{-n_k}w}^{n_k} |_{E^u} \circ \left( H^u_{f^{-n_k} y f^{-n_k} w}
    \circ H^{c}_{f^{-n_k} x f^{-n_k} y} \circ \left( H^{c}_{f^{-n_k} z
    f^{-n_k} w} \circ H^u_{f^{-n_k} x f^{-n_k} z} \right)^{- 1} \right) \circ D f_{w}^{-n_k} |_{E^u} \\
    & \rightarrow & \mathrm{Id}
  \end{eqnarray*}
  as $k \rightarrow + \infty$. Therefore we deduce equation \eqref{5.2} as desired. 
\end{proof}

\begin{lemma}
  \label{l27}For any $x, y \in \mathrm{supp} \nu$, $y \in W^c (x)$, we have the
  equality
  \[ \Phi_y^{-1} \circ h^{c}_{x y} \circ \Phi_x = H^{c}_{x y} \]
  as maps from $E^u_x$ to $E^u_y$. The measurable function $H^{c}$ on
  $\mathcal{Q}'$ therefore admits a continuous extension to $\mathrm{supp} \nu_\mathcal{E}$, and
  the center holonomy is linear in the charts $\{ \Phi_x \}_{x \in M}$.
\end{lemma}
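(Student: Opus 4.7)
\begin{proof*}{Proposal for Lemma \ref{l27}.}
The plan is to first establish the identity on the full $\nu_{\mathcal{E}}$-measure set $\mathcal{Q}'$ from Lemma \ref{l24}, and then propagate it to all of $\operatorname{supp}\nu_{\mathcal{E}}$ by continuity. Fix $(x,y)\in\mathcal{Q}'$. The map
\[ \Psi\assign \Phi_y^{-1}\circ h^c_{xy}\circ\Phi_x : E^u_x\to E^u_y \]
is well-defined since $f$ is a fibered holomorphic partially hyperbolic diffeomorphism with global $su$-holonomy, and it is quasiconformal by Proposition \ref{cac}. Note $\Psi(0)=\Phi_y^{-1}(y)=0$. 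My strategy is to show that $D_v\Psi = H^c_{xy}$ at Lebesgue-a.e.\ $v\in E^u_x$; combined with Proposition \ref{l26}, this forces $\Psi$ to be the $\mathbb{R}$-linear map $H^c_{xy}$ everywhere.

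To compute $D_v\Psi$ at a typical $v$, set $z=\Phi_x(v)\in W^u(x)$ and $w=h^c_{xy}(z)\in W^u(y)$. Write
\[ \Psi=\bigl(\Phi_y^{-1}\circ\Phi_w\bigr)\circ\bigl(\Phi_w^{-1}\circ h^c_{zw}\circ\Phi_z\bigr)\circ\bigl(\Phi_z^{-1}\circ\Phi_x\bigr), \]
using that $h^c_{xy}$ and $h^c_{zw}$ coincide as germs. By Proposition \ref{l9} the outer two factors are affine with derivatives $H^u_{wy}$ and $H^u_{xz}$ respectively. Provided $(z,w)\in\mathcal{Q}'$, the middle factor is differentiable at $0$ with derivative $H^c_{zw}$ (by definition of $H^c$). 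Hence
\[ D_v\Psi = H^u_{wy}\circ H^c_{zw}\circ H^u_{xz}, \]
and equation \eqref{5.2} of Lemma \ref{l24} rewrites this as $H^c_{xy}$. So the key check is that, for $\nu_{\mathcal{E}}$-a.e.\ $(x,y)$, the set
\[ \{\, v\in E^u_x : (\Phi_x(v),\,h^c_{xy}(\Phi_x(v)))\in\mathcal{Q}'\,\} \]
has full Lebesgue measure; this is where I expect the main technical work, and it should follow from the same Fubini argument used in Lemma \ref{dif}: decompose $\nu^{cu}_x$ both as $\int_{W^c(x)} \nu^u_y\, d\nu^c_x$ and as $\int_{W^u_{\mathrm{loc}}(x)} \nu^c_y\, d\nu^u_x$ (Proposition \ref{fub}), apply this to the characteristic function of the full-measure set $\mathcal{Q}'$, and use absolute continuity of the center holonomy along $W^u$ (Proposition \ref{cac}) to transport the resulting a.e.\ statement from $W^u_{\mathrm{loc}}(x)$ to $W^u_{\mathrm{loc}}(y)$.

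With constant derivative $H^c_{xy}$ at a.e.\ $v$, Proposition \ref{l26} applies to the quasiconformal map $\Psi$ and yields $\Psi\in C^1$ with $D\Psi\equiv H^c_{xy}$ throughout $E^u_x$. Integration along line segments from $0$, combined with $\Psi(0)=0$, gives $\Psi=H^c_{xy}$ on all of $E^u_x$, as desired. For the continuous extension, observe that the right hand side $\Phi_y^{-1}\circ h^c_{xy}\circ\Phi_x$ depends continuously on $(x,y)\in\mathcal{E}$ (continuity of $\Phi$, Proposition \ref{affine}, together with continuity of center holonomy), while $\mathcal{Q}'$ is dense in $\operatorname{supp}\nu_{\mathcal{E}}$. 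Thus we may \emph{define} $H^c_{xy}\assign\Phi_y^{-1}\circ h^c_{xy}\circ\Phi_x$ for every $(x,y)\in\operatorname{supp}\nu_{\mathcal{E}}$; this agrees with the original $\mathcal{Q}$-definition on $\mathcal{Q}'\cap\operatorname{supp}\nu_{\mathcal{E}}$, is automatically $\mathbb{R}$-linear as a uniform limit of $\mathbb{R}$-linear maps, and gives both conclusions of the lemma.
\end{proof*}
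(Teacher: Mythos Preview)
Your proposal is essentially correct and follows the same line as the paper's proof: decompose $\Psi$ at a generic $v$ using Proposition~\ref{l9}, invoke equation~\eqref{5.2} to collapse $H^u_{wy}\circ H^c_{zw}\circ H^u_{xz}$ to $H^c_{xy}$, apply Proposition~\ref{l26} to the quasiconformal $\Psi$, and then extend by density.

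One point deserves correction. Your justification that $\Psi:E^u_x\to E^u_y$ is globally well-defined ``since $f$ has global $su$-holonomy'' is not valid: Proposition~\ref{holo} concerns global stable/unstable holonomy between \emph{center} leaves, not global center holonomy between \emph{unstable} leaves. A~priori $h^c_{xy}$ is only defined on $W^u_{\mathrm{loc}}(x)$ (cf.\ Proposition~\ref{cac}), so the quasiconformality and the application of Proposition~\ref{l26} only yield $\Psi=H^c_{xy}$ on a neighborhood of $0$ of uniform size. The paper closes this gap in its last sentence: after obtaining the identity on such a neighborhood for every $(x,y)\in\operatorname{supp}\nu_{\mathcal{E}}$, it uses the equivariance of $\Phi$, $h^c$, and $H^c$ under $f^{-1}$ and $Df^{-1}|_{E^u}$ (which uniformly contract $E^u$) to propagate the equation to all of $E^u_x$; in particular this is what shows $h^c_{xy}$ extends to the whole unstable leaf, as announced in Proposition~\ref{reli}. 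With this adjustment your argument is complete and matches the paper's.
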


\begin{proof}
 We first consider pairs $(x, y) \in \mathcal{Q}'$. Since $D_0 \Phi_x = 1$
  for every $x \in M$ and $H^{c}_{x y} (x) = y$, the equation
  \[ D_0 (\Phi_y^{- 1} \circ h^{c}_{x y} \circ \Phi_x) = H^{c}_{x y} \]
  holds for any $(x, y) \in \mathcal{Q}'$. To compute the derivative at other
  points of $E^u_x$, we let $v \in E^u_x, z = \Phi_x (v)$, and $w = h^{c}_{x y} (z)$. We suppose that $(z, w) \in
  \mathcal{Q}'$ and compute,
  \[ D_v (\Phi_y^{- 1} \circ h^{c}_{x y} \circ \Phi_x) = D_0 (\Phi_y^{- 1}
     \circ \Phi_w) \circ D_0 (\Phi_w^{- 1} \circ h^{c}_{x y} \circ \Phi_z)
     \circ D_v (\Phi_z^{- 1} \circ \Phi_x) . \]
  By Lemma \ref{l9} we know that $D_0 (\Phi_y^{- 1} \circ \Phi_w) =
  H^u_{w y}$ and $D_v (\Phi_z^{- 1} \circ \Phi_x) = H^u_{x z}$. Hence by Proposition \ref{l24}
  \[ D_v (\Phi_y^{- 1} \circ h^{c}_{x y} \circ \Phi_x) = H^u_{w y} \circ
     H^{c}_{z w} \circ H^u_{x z} = H^{c}_{x y}, \]
  whenever $(x, y), (z, w) \in \mathcal{Q}'$, by Lemma \ref{l24}. Since
  $\mathcal{Q}'$ has full $\nu_{\mathcal{E}}$ measure and $\nu$ is a Gibbs $cu$-state, we conclude that for
  $\nu$-a.e. $x \in M$ and $\nu^c_x$-a.e. $y \in U_x$ the map $\Phi_y^{- 1}
  \circ h^{c}_{x y} \circ \Phi_x : E^u_x \rightarrow E^u_y$ is
  differentiable Lebesgue almost everywhere on $E^u$ with derivative $H^{c}_{x
  y}$ everywhere. By Lemma \ref{qsl} and quasiconformality of center holonomy (Proposition \ref{cac}), this implies that for fixed $x$ and $y$, $\Phi_y^{- 1} \circ
  h^{c}_{x y} \circ \Phi_x$ is a $C^1$ map with derivative $H^{c}_{x y}$
  everywhere, i.e., $\Phi_y^{-1} \circ H^{c}_{x y} \circ \Phi_x$ coincides exactly
  with the linear map $H^{c}_{x y}$. 
  
 Since $\mathcal{Q}'$ has full $\nu_{\mathcal{E}}$-measure in $\mathcal{E}$
  we conclude that $\mathcal{Q}'$ is dense in $\mathrm{supp} \nu_{\mathcal{E}}$
  and thus the equation
  \begin{equation}\label{4.5}
      \Phi_y^{- 1} \circ h^{c}_{x y} \circ \Phi_x = H^{c}_{x y}
  \end{equation}
  holds on $E^u$ for a dense set of pairs $(x, y) \in \mathrm{supp}
  \nu_{\mathcal{E}}$. But the left side of this equation depends uniformly
  continuous on the pair $(x, y)$ as a map $\Phi_x^{- 1} (W^u_{\mathrm{loc}}
  (x)) \rightarrow \Phi_y^{- 1} (W^u_{\mathrm{loc}} (y))$ between neighborhoods
  of $0$ in $E^u_x$ and $E^u_y$ of uniform size. Furthermore, the linear map
  $H^{c}_{x y}$ is determined by its restriction to a map between these
  neighborhoods. Hence we conclude that $H^{c}_{x y}$ also depends
  uniformly continuously on the pair $(x, y) \in \mathcal{Q}'$.

 When $y$ is close to $x$, the map $\Phi_y^{-1} \circ h^{c}_{x y} \circ
  \Phi_x$ is uniformly close to the linear identifications $I_{x y} : E^u_x
  \rightarrow E^u_y$. Hence, $H^{c}_{x y}$ is uniformly close to $I_{x y}$
  for $(x, y) \in \mathcal{Q}'$. In particular, for $(x, y) \in \mathcal{Q}'$,
  the maps $H^{c}_{x y}$ belong to a uniformly bounded subset of the space
  of invertible linear maps $E^u_x \rightarrow E^u_y$. This shows that $H^{c}_{x y}$ admits a continuous extension to $\mathrm{supp} \nu_{\mathcal{E}}$
  such that equation $\left( \ref{4.5} \right)$ still holds on a neighborhood
  of $0$ in $E^u_x$ for any $(x, y) \in \mathrm{supp} \nu_{\mathcal E}$. Finally, because
  $\Phi_x, h^{c}_{x y}$, and $H^{c}_{x y}$ all have the proper equivariance
  properties with respect to $f^{-1}$ and $D f^{-1}$, which uniformly
  contract $E^u$, it follows that equation $\left( \ref{4.5} \right)$ actually
  holds on all of $E^u_x$.
\end{proof}
\bibliographystyle{plain}
\bibliography{main1}

\end{document}